\title{Bounding-Focused Discretization Methods for the Global Optimization of Nonconvex Semi-Infinite Programs}
\date{June 12, 2025}
\author[1]{Evren M. Turan}
\author[1]{Johannes J\"aschke}
\author[2]{Rohit Kannan}
\affil[1]{Department of Chemical Engineering, Norwegian University of Science and Technology (NTNU), \protect\\ Trondheim, Norway. E-mail: evren.m.turan@ntnu.no, johannes.jaschke@ntnu.no}
\affil[2]{Grado Department of Industrial and Systems Engineering, Virginia Tech, Blacksburg, VA, USA, \protect\\ E-mail: rohitkannan@vt.edu}
\DeclareMathOperator*{\argmin}{arg\,min}
\DeclareMathOperator*{\argmax}{arg\,max}
\newcommand{\tr}[1]{\ensuremath{{#1}^\text{T}}}
\newcommand{\uset}[2]{\ensuremath{\underset{#1}{#2}}}
\DeclarePairedDelimiter\abs{\lvert}{\rvert}%
\DeclarePairedDelimiter\norm{\lVert}{\rVert}%
\newcommand{\A}{\mathcal{A}}
\newcommand{\E}{\mathcal{E}}
\newcommand{\I}{\mathcal{I}}
\newcommand{\R}{\mathbb{R}}
\providecommand{\keywords}[1]
{
  \small	
  \textbf{Key words:} #1
}
\newtheorem{theorem}{Theorem}[]
\newtheorem{lemma}[theorem]{Lemma}
\newtheorem{proposition}[theorem]{Proposition}
\newtheorem{example}{Example}
\let\oldtheorem\theorem
\renewcommand{\theorem}{\oldtheorem\normalfont}
\let\oldlemma\lemma
\renewcommand{\lemma}{\oldlemma\normalfont}
\let\oldassumption\assumption
\renewcommand{\assumption}{\oldassumption\normalfont}
\let\oldremark\remark
\renewcommand{\remark}{\oldremark\normalfont}
\let\olddefinition\definition
\renewcommand{\definition}{\olddefinition\normalfont}
\let\oldcorollary\corollary
\renewcommand{\corollary}{\oldcorollary\normalfont}
\let\oldproposition\proposition
\renewcommand{\proposition}{\oldproposition\normalfont}
\let\oldexample\example
\renewcommand{\example}{\oldexample\normalfont}
\let\oldconjecture\conjecture
\renewcommand{\conjecture}{\oldconjecture\normalfont}
\begin{document}

\maketitle

\begin{abstract}
We use sensitivity analysis to design \textit{bounding-focused} {discretization} (cutting-surface) methods for the global optimization of {nonconvex} {semi-infinite} programs (SIPs).
We begin by formulating the {optimal} bounding-focused discretization of SIPs as a max-min problem and {propose} {variants} that are more computationally tractable.
We then use {parametric} {sensitivity} theory to design an effective heuristic approach for solving these max-min problems.
We also show how our new iterative discretization methods may be modified to ensure that the solutions of their {discretizations} converge to an optimal solution of the SIP.
We then formulate optimal bounding-focused {\textit{generalized}} {discretization} of SIPs as {max-min} problems and design heuristic {algorithms} for their solution.
Numerical experiments on standard nonconvex SIP test instances from the literature demonstrate that our new {bounding-focused} {discretization} methods can significantly reduce the number of {iterations} for convergence {relative} to a state-of-the-art feasibility-focused discretization method. \\[0.1in]
\keywords{Semi-infinite programming, Robust optimization, Discretization, Global optimization, Cutting-surface, Sensitivity analysis}
\end{abstract}

\section{Introduction}
\label{sec: intro}

Semi-infinite programs (SIPs) are mathematical optimization problems with a finite number of decision variables and an infinite number of constraints.
They can be used to model several problems in science and engineering, such as robust optimization, controller design, Chebyshev approximation, and design centering~\cite{grossmann1983optimization,turan2022design,lopez2007semi,djelassi2021recent}.
Our focus is on the \textit{global} optimization of SIPs of the form:
\begin{align}
\label{eqn:sip}
v^* := \min_{x \in X} \:\: & f(x) \tag{SIP} \\
\text{s.t.} \:\: & g(x,y) \leq 0, \quad \forall y \in Y, \label{eqn:sic}
\end{align}
where $X \subset \R^{d_x}$ and $Y \subset \R^{d_y}$ are nonempty compact sets, $\abs{Y} = \infty$, and functions $f: \R^{d_x} \to \R$ and $g: \R^{d_x} \times \R^{d_y} \to \R$ are continuous.
We assume \eqref{eqn:sip} is feasible, but do not assume that $X$, $Y$, $f$, or $g$ is convex.
We detail extensions to SIPs with multiple semi-infinite constraints in Section~\ref{sec:generalizations}.

A key challenge in solving~\eqref{eqn:sip} is that evaluating feasibility of a candidate solution $x \in X$ requires the \textit{global} solution of the following lower-level problem:
\begin{equation}
    \label{eqn:llp}
G(x) := \max_{y \in Y} \: g(x,y). \tag{LLP($x$)}
\end{equation}
Clearly, $x \in X$ is feasible for~\eqref{eqn:sip} if and only if $G(x) \leq 0$.

Several papers propose algorithms for the global minimization of nonconvex SIPs~\cite{bhattacharjee2005global,floudas2008adaptive,mitsos2008relaxation,mitsos2011global,tsoukalas2011feasible,stein2012adaptive,djelassi2017hybrid,marendet2020standard,djelassi2020discretization,djelassi2021recent}. 
They primarily construct lower bounds for~\eqref{eqn:sip} by replacing the semi-infinite constraint~\eqref{eqn:sic} with a \textit{finite} discretization (cf.\ \cite{cheney1959newton,kelley1960cutting}):
\begin{align}
\label{eqn:disc-lbp}
\min_{x \in X} \:\: & f(x) \tag{LBP} \\
\text{s.t.} \:\: & g(x,y) \leq 0, \quad \forall y \in Y_d, \nonumber
\end{align}
where $Y_d \subsetneq Y$ with $\abs{Y_d} < \infty$.
A lower bound for the optimal value $v^*$ of~\eqref{eqn:sip} can be obtained by solving this (nonconvex) problem to {\it global} optimality.

The choice of discretization $Y_d$ can greatly impact the {tightness} of the lower bound obtained by solving~\eqref{eqn:disc-lbp}.
Because na\"ive {discretization} approaches may necessitate a large discretization for~\eqref{eqn:disc-lbp} to {effectively} approximate \eqref{eqn:sip}~\cite{still2001discretization}, techniques for adaptively populating~$Y_d$ are of interest.
Global {optimization} methods for~\eqref{eqn:sip} {predominantly} rely on the {\textit{feasibility-focused}} discretization method of {Blankenship} and Falk (BF; see Algorithm~\ref{alg:bfdisc})~\cite{blankenship1976infinitely}, which is \textit{the} {state-of-the-art discretization method} for nonconvex SIPs.
This method populates $Y_d$ with points in $Y$ corresponding to the largest violation of constraint~\eqref{eqn:sic} at incumbent solutions of~\eqref{eqn:disc-lbp}.

\begin{algorithm}[t]
\caption{The Blankenship and Falk algorithm \cite{blankenship1976infinitely}}
\label{alg:bfdisc}
{
\begin{algorithmic}[1]
\State \textbf{Input:} feasibility tolerance $\varepsilon_f \geq 0$, initial discretization $Y_d = \emptyset$.

\For{$k = 1, 2, \dots$}

\State Solve problem~\eqref{eqn:disc-lbp} globally to get solution $x^k$, lower bound $LBD^k$.

\State Solve problem~\eqref{eqn:llp} with $x = x^k$ globally to get solution $y^{BF,k} \in Y$.

\If{$G(x^k) \leq \varepsilon_f$}

\State \textbf{Terminate} with $\varepsilon_f$-feasible solution $x^k$ to~\eqref{eqn:sip}.

\Else

\State Set $Y_d \leftarrow Y_d \cup \{ y^{BF,k} \}$.

\EndIf

\EndFor

\end{algorithmic}
}
\end{algorithm}

The sequence of non-decreasing lower bounds $\{LBD^k\}_k$ generated by the BF algorithm converges to $v^*$ under our assumptions on~\eqref{eqn:sip} (see, e.g.,~\citep[Theorem~2.1]{blankenship1976infinitely}).
However, as Example~\ref{exm:dp} below illustrates, the BF algorithm may require an excessively large discretization $Y_d$ before the sequence $\{LBD^k\}_k$ converges to within a specified tolerance of the optimal value $v^*$.

\begin{example}{\citep[Example (DP)]{mitsos2009test}}
\label{exm:dp}
Consider~\eqref{eqn:sip} with $d_x = 1$, $d_y = 1$, $X = [0,6]$, $Y = [2,6]$, $f(x) = 10 - x_1$, and $g(x,y) = \frac{y^2_1}{1 + \exp(-40(x_1-y_1))} + x_1 - y_1 - 2$.
The global solution is $x^* = 2$ with objective $v^* = 8$.

Solving~\eqref{eqn:disc-lbp} with $Y_d = \{2\}$ (prescribed by our new discretization methods) yields a lower bound of $v^*$. 
However, as shown in Table~\ref{tab:exdp}, the state-of-the-art BF algorithm needs more than $20$ iterations to even yield a lower bound that is within $10\%$ of $v^*$. 
Figure~\ref{fig:plot_dp_bf} contrasts the above discretizations of the semi-infinite constraint.
\end{example}

\begin{table}[t]
\centering
\begin{tabular}{l|c|c|c|c|c|c|c|c|c|c}
\hline
\textbf{Iteration No.} & 1 & 2    & 3    & 4    & 5    & 10   & 15   & 20   & 25   & 28 \\ \hline
\textbf{Lower Bound}     & 4 & 4.19 & 4.38 & 4.56 & 4.74 & 5.62 & 6.41 & 7.12 & 7.73 & 8  \\ \hline
\end{tabular}
\caption{Lower bounds generated by the BF algorithm~\ref{alg:bfdisc} on Example~\ref{exm:dp}. This algorithm needs more than $20$ iterations to approximate $v^* = 8$ to within $10\%$.}
\label{tab:exdp}
\end{table}

Example~\ref{exm:dp} motivates our study of \textit{new} \emph{bounding-focused} discretization methods for~\eqref{eqn:sip} that can mitigate the slow convergence of the feasibility-focused BF algorithm.
The main idea of our bounding-focused discretization methods is to populate the discretization $Y_d$ with points in $Y$ such that the lower bound from~\eqref{eqn:disc-lbp} is maximized.
Since determining optimal bounding-focused discretizations may be challenging, we consider more tractable variants and design efficient heuristic approaches to determine such discretizations.
We also study how to construct bounding-focused \textit{generalized} discretizations.

While we only investigate discretization methods that yield tighter lower bounds, our ideas may be adapted to design discretization methods for finding feasible solutions faster (cf.\ \cite{mitsos2011global,tsoukalas2011feasible}).
We assume for our theoretical results that all subproblems solved to global optimality are solved exactly in finite time; our approaches may also be adapted to the setting where such subproblems are only solved to $\varepsilon$-global optimality for some $\varepsilon > 0$~\cite{djelassi2017hybrid,djelassi2020discretization,harwood2021note}.

\begin{figure}[t]
\centering
\begin{subfigure}{\textwidth}
\includegraphics[width=0.33\linewidth]{./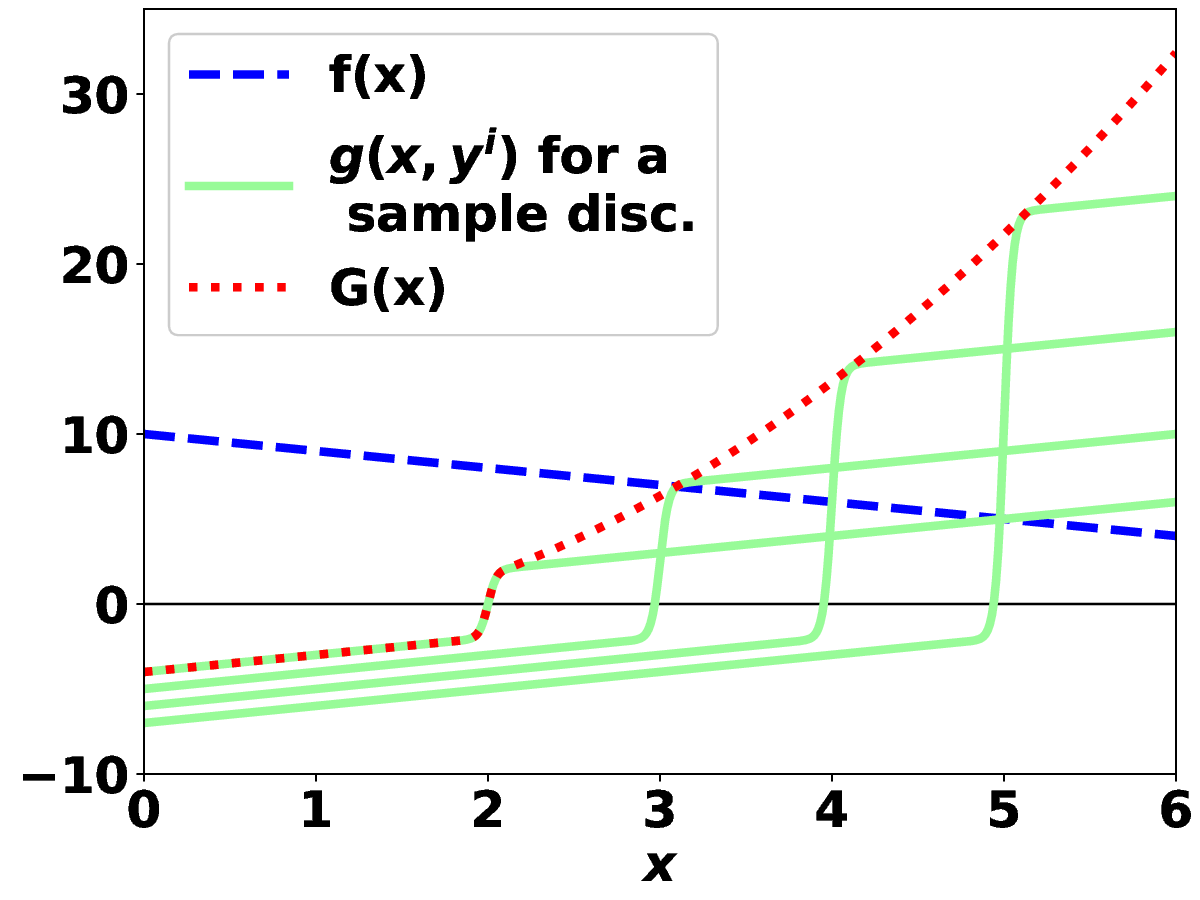}%
\hfill
\includegraphics[width=0.33\linewidth]{./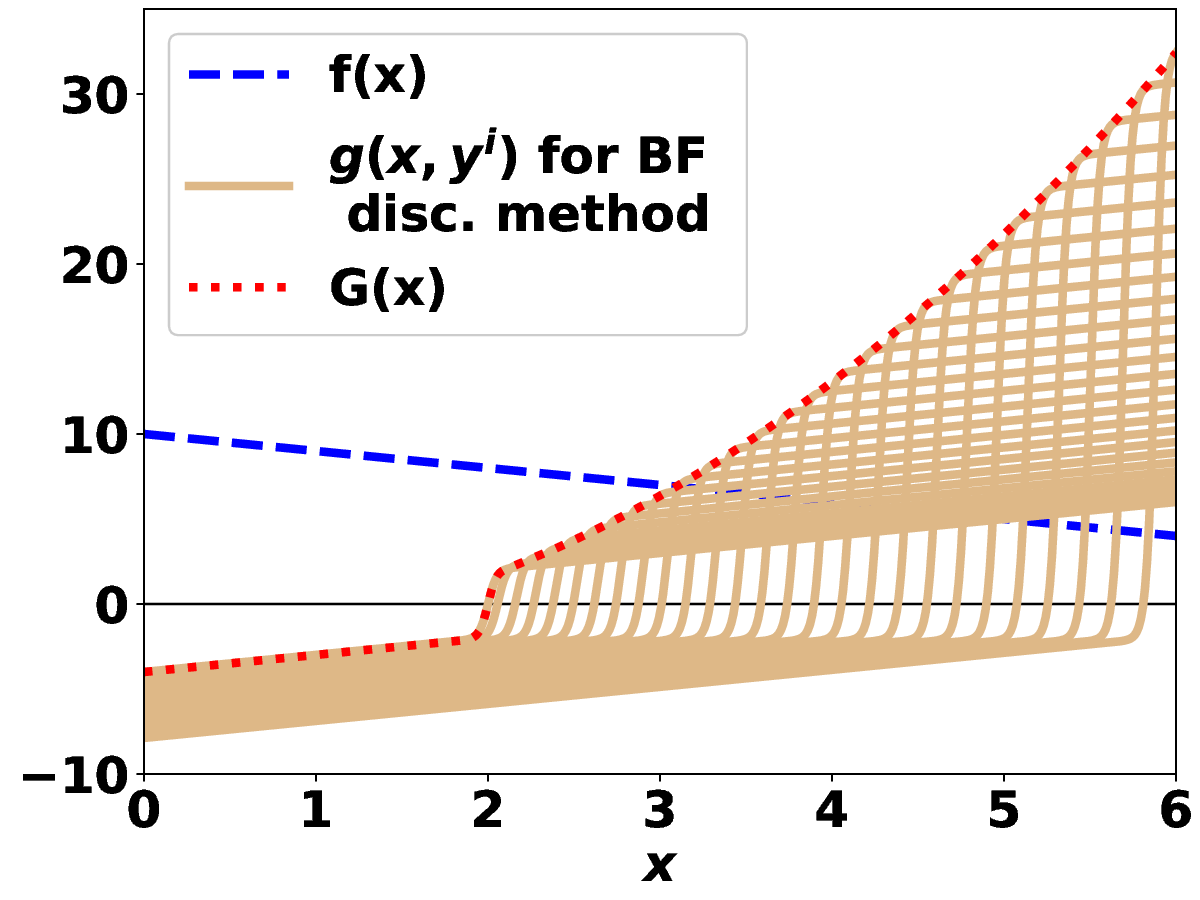}%
\hfill
\includegraphics[width=0.33\linewidth]{./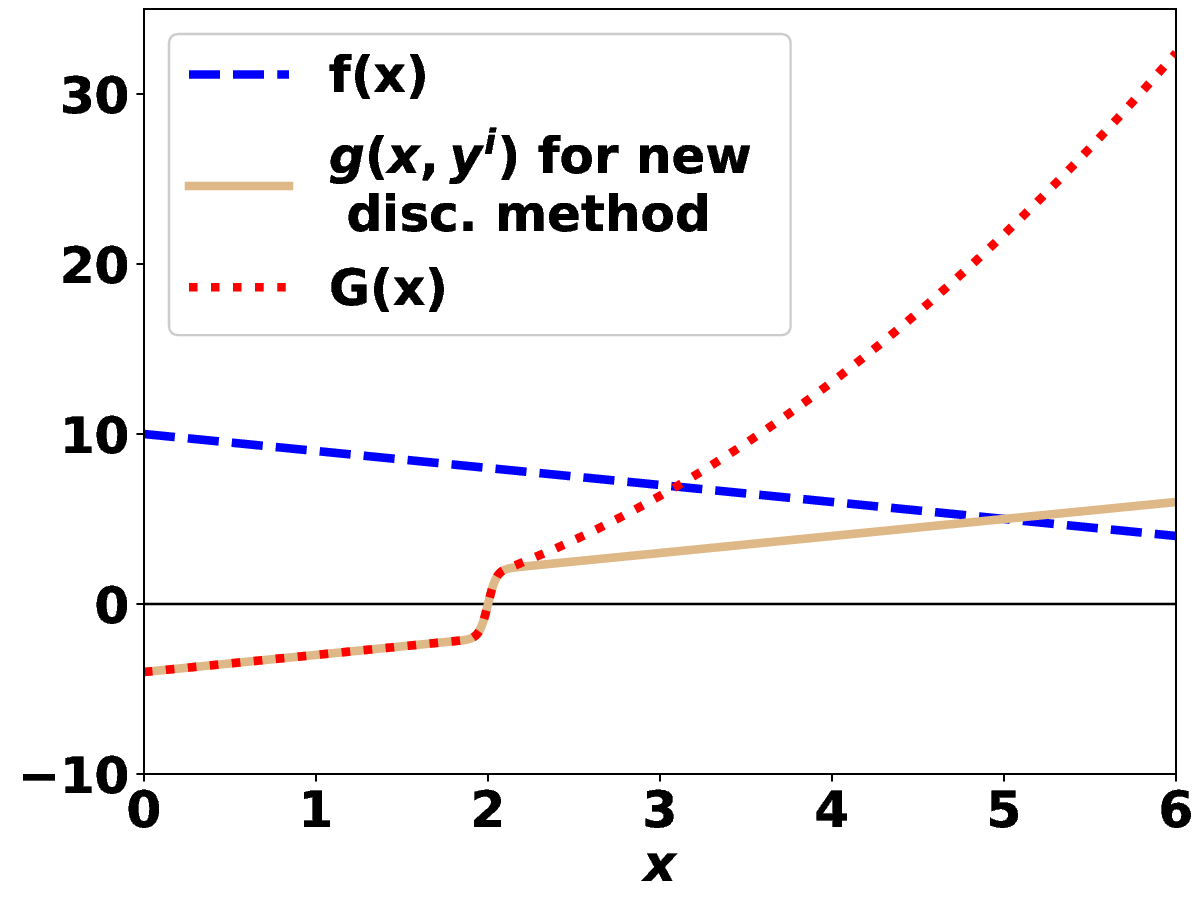}
\end{subfigure}
\caption{
Benefits of bounding-focused discretization: 
Left: objective $f(x)$ and constraint $G(x)$ for Example~\ref{exm:dp} along with discretization $Y_d = \{2,3,4,5\}$ (cf.\ \citep[Figure~3.1]{djelassi2020discretization}). 
Middle: BF method needs $27$ discretization points for its lower bounds to converge to $v^*$.
Right: our proposed bounding-focused discretization method in Section~\ref{sec:accelerated_discretization} only requires a single discretization point ($\abs{Y_d} = 1$).
}
\label{fig:plot_dp_bf}
\end{figure}

This paper is organized as follows.
Section~\ref{sec:accelerated_discretization} proposes new bounding-focused discretization methods, designs effective heuristic solution approaches, and presents theoretical guarantees.
Section~\ref{sec:generalized_discretization} proposes bounding-focused generalized discretization methods for~\eqref{eqn:sip} and designs effective heuristic solution strategies.
Section~\ref{sec:generalizations} outlines extensions.
Section~\ref{sec:numerical_results} presents detailed computational results that show our bounding-focused discretization methods significantly reduce the number of iterations for convergence relative to the BF algorithm.
Section~\ref{sec:conclusion} concludes with avenues for future work.
{The appendix contains a review of parametric sensitivity theory as well as omitted proofs.}

\paragraph*{Notation} Let $[n] := \{1,\dots,n\}$ and $\text{mid}(v^1,v^2,v^3)$ denote the componentwise median of vectors $v^1, v^2, v^3$. Given $\delta > 0$, $v \in \R^n$, $S \subset \R^n$, let $\norm{v}$ denote the Euclidean norm of $v$, $B_{\delta}(v)$ denote the open Euclidean ball of radius $\delta$ centered at $v$, $\text{proj}_S(v)$ denote (an element of) the Euclidean projection of $v$ onto $S$, and $\text{diam}(S)$ denote the diameter of~$S$ with respect to the Euclidean norm.
We say~\eqref{eqn:sip} is convex if $X$ is convex and $f$ and $g(\cdot,y)$ are convex on~$X$ for each $y \in Y$ (note that $Y$ and $-g(x,\cdot)$ need not be convex in this case).

\section{Bounding-focused discretization methods}
\label{sec:accelerated_discretization}

We propose new bounding-focused discretization methods for~\eqref{eqn:sip} that can achieve faster rate of convergence of lower bounds than the BF algorithm~\ref{alg:bfdisc}. 
The key idea of these discretization methods is to populate $Y_d$ with points in $Y$ that yield the highest lower bound.
In the first iteration, instead of updating $Y_d$ with a solution $y^{BF,1}$ of~\eqref{eqn:llp} at $x = x^1$ as in the BF algorithm~\ref{alg:bfdisc}, we propose to solve the following max-min problem to determine a discretization $Y_d = \{\bar{y}^1\}$ that results in the highest lower bound:
\begin{align}
\label{eqn:max_min_iter1}
\bar{y}^1 \in \argmax_{y^1 \in Y} \: & \min_{x \in X} \:\: f(x) \\
& \:\:\:\text{s.t.} \:\: g(x,y^1) \leq 0. \nonumber
\end{align}
We assume for simplicity that the maxima in all of our subproblems is attained (otherwise, we may pick any $\varepsilon$-optimal solution for some small $\varepsilon > 0$).
Techniques for solving {problem~\eqref{eqn:max_min_iter1}} are discussed in Section~\ref{subsec:solving_max_min}.
We consider two approaches for updating the discretization $Y_d$ at iteration $k > 1$.

The first approach discards the discretization $Y^{k-1}_d$ from iteration $k-1$ and determines a new discretization at iteration $k$ by solving the max-min problem:
\begin{alignat}{2}
\label{eqn:method-2}
(\bar{y}^1,\dots,\bar{y}^k) \in &\argmax_{(y^1,\dots,y^k) \in Y^k} 
\phi_k(y^1,\dots,y^k) := \:\: &&\min_{x \in X} \:\: f(x) \\
& && \:\:\:\text{s.t.} \:\: g(x,y^i) \leq 0, \quad \forall i \in [k], \nonumber
\end{alignat}
where $\phi_k: Y^k \to \mathbb{R}$ denotes the value function of the inner-minimization in problem~\eqref{eqn:method-2} at iteration $k$.
Formulation~\eqref{eqn:method-2} is inspired by the idea of strong partitioning proposed by Kannan et al.\ \cite{kannan2022learning}.
The resulting discretization {$Y^k_d := \{\bar{y}^1,\dots,\bar{y}^k\}$} at iteration $k$ yields the highest lower bound among all possible relaxations~\eqref{eqn:disc-lbp} with at most $k$ discretization points.
However, the outer-maximization in problem~\eqref{eqn:method-2} involves {$k \times d_y$} variables compared to only $d_y$ variables in problem~\eqref{eqn:max_min_iter1}.

To mitigate this increased computational burden, our second approach updates the discretization $Y^{k-1}_d := \{\bar{y}^1,\dots,\bar{y}^{k-1}\}$ at {iteration} $k-1$ by adding a single point $\bar{y}^k \in Y$ that maximizes the lower bound improvement.
This can be formulated as the max-min problem:
\begin{alignat}{2}
\label{eqn:method-1}
\bar{y}^k \in \argmax_{y^k \in Y} \:\: & \psi_k\bigl(y^k;Y^{k-1}_d\bigr) := &&\min_{x \in X} \:\: f(x) \\
& &&\:\:\:\text{s.t.} \:\: g(x,y) \leq 0, \quad \forall y \in Y^{k-1}_d, \nonumber \\
& &&\quad\quad\:\: g(x,y^k) \leq 0, \nonumber
\end{alignat}
where $\psi_k: Y \to \mathbb{R}$ denotes the value function of the inner minimization, and $Y^k_d = Y^{k-1}_d \cup \{\bar{y}^k\}$ is the discretization specified at iteration $k$.
Problem~\eqref{eqn:method-1} is a greedy approximation of problem~\eqref{eqn:method-2}. 
We introduce two variants of these discretization methods in Section~\ref{subsec:outline-opt-disc-alg} and establish theoretical guarantees for our bounding-focused discretization methods in  Section~\ref{subsec:convergence_guarantees}.

In contrast with the approach of Tsoukalas and Rustem~\cite{tsoukalas2011feasible}, which treats the violation of the semi-infinite constraint~\eqref{eqn:sic} and the objective of~\eqref{eqn:sip} as two competing objectives, problems~\eqref{eqn:method-2} and~\eqref{eqn:method-1} directly optimize the discretization for the best lower bound.
Baltean-Lugojan et al.\ \cite{baltean2019scoring} propose bounding-focused cuts with a similar flavor but tailored for outer-approximating semidefinite programs, which are a family of \textit{convex} SIPs.
Similar to problem~\eqref{eqn:method-2}, Coniglio and Tieves~\cite{coniglio2015generation} formulate bounding-focused cut selection for integer linear programs as a bilevel problem and reformulate it as a single-level bilinear program using linear programming duality (cf.\ Section~\ref{subsec:solving_max_min}).
Paulus et al.\ \cite{paulus2022learning} consider bounding-focused cut selection for mixed-integer linear programs (MILPs), where they use explicit enumeration to choose the best bounding-focused cut from a \textit{finite} list of candidate cuts.
Finally, Das et al.\ \cite{das2022near} use the well-known result that problem~\eqref{eqn:method-2} with $k = d_x$ discretization points yields an exact reformulation of \textit{convex} SIPs under mild assumptions (see Proposition~\ref{prop:alg_opt} in Section~\ref{subsec:convergence_guarantees}). They use simulated annealing to solve the resulting max-min problem and report encouraging results on small-scale convex SIPs.

\subsection{Outline of bounding-focused discretization algorithms}\label{subsec:outline-opt-disc-alg}

Algorithm~\ref{alg:prototype_disc} outlines a prototype bounding-focused discretization method for~\eqref{eqn:sip}.
Similar to the BF algorithm~\ref{alg:bfdisc}, at each iteration, it solves~\eqref{eqn:disc-lbp} to \textit{global} optimality to determine a candidate solution $x^k$ and a corresponding lower bound $LBD^k$.
It then solves the lower-level problem~\eqref{eqn:llp} with $x = x^k$ to \textit{global} optimality to determine a point $\hat{y}^k$, which is used to check if $x^k$ is $\varepsilon_f$-feasible for~\eqref{eqn:sip}.
The key difference between Algorithm~\ref{alg:prototype_disc} and the BF algorithm~\ref{alg:bfdisc} is on lines 8--11 of Algorithm~\ref{alg:prototype_disc}. 
If $x^k$ is not $\varepsilon_f$-feasible, Algorithm~\ref{alg:prototype_disc} solves a max-min problem (heuristically) to identify new points that may be used to update the discretization $Y_d$ when a sufficient bound increase condition holds.
In contrast, the BF algorithm~\ref{alg:bfdisc} always adds $\hat{y}^k$ to the discretization.

We consider four realizations of Algorithm~\ref{alg:prototype_disc} that only vary on lines 8--11: 
\texttt{OPT}, \texttt{GREEDY}, \texttt{2GREEDY}, and \texttt{HYBRID}.

\begin{itemize}
\item \texttt{OPT} seeks to discard the discretization $Y^{k-1}_d$ at iteration {$k-1$} and replace it with a fresh discretization obtained by solving problem~\eqref{eqn:method-2}.
It initializes the solution of this max-min problem with $Y^{k-1}_d \cup \{\hat{y}^k\}$.

\item \texttt{GREEDY} adds a single point $\bar{y}^k$ to the discretization $Y^{k-1}_d := \{\bar{y}^1,\dots,\bar{y}^{k-1}\}$ at iteration $k-1$ by solving problem~\eqref{eqn:method-1} with the initialization $\hat{y}^k$.

\item \texttt{2GREEDY} first updates the discretization at iteration $k-1$ with $\hat{y}^k$, i.e., $Y^{k-1}_d \leftarrow Y^{k-1}_d \cup \{\hat{y}^k\}$.
It then solves problem~\eqref{eqn:method-1} to find another point to add to the discretization using a perturbation of $\hat{y}^k$ as the initialization.

\item \texttt{HYBRID} mitigates the increasing computational burden of Algorithm~\texttt{OPT} as the iteration count $k$ increases. 
For the first $K$ iterations, it solves problem~\eqref{eqn:method-2} to try and determine a fresh discretization with sufficient lower bound improvement (similar to \texttt{OPT}).
From iteration $K+1$, it then switches to the \texttt{GREEDY} strategy and solves problem~\eqref{eqn:method-1} to try and find a single best point to add to the previous discretization $Y^{k-1}_d$.
\end{itemize}

\noindent 
All four realizations of Algorithm~\ref{alg:prototype_disc} use the point $\hat{y}^k$ to either construct an initial guess, or to add to the discretization.
Algorithm~\texttt{2GREEDY} looks to add two discretization points per iteration (including $\hat{y}^k$) with the goal of reducing the number of global solves of~\eqref{eqn:disc-lbp} and~\eqref{eqn:llp} and the overall time required for the sequence of lower bounds $\{LBD^k\}$ to converge to $v^*$.

{We emphasize that except on line~9 of Algorithm~\ref{alg:prototype_disc}, we do \textit{not} require the inner-minimizations and outer-maximizations of our max-min problems to be solved to global optimality.
Although the additional global solve at each iteration of Algorithm~\ref{alg:prototype_disc} may seem expensive, it is worth noting that if the condition on line~10 is satisfied, then the global solve on line~3 can be skipped in the subsequent iteration.
Furthermore, this additional global solve need not be performed at every iteration; it can be performed infrequently (e.g., only every tenth iteration $k$) without compromising convergence guarantees or the validity of the lower bound $LBD^k$.
Finally, this global solve becomes redundant for convex SIPs under mild assumptions, since the inner-minimization problems are convex programs in this case.
In practice, we skip this extra global solve and instead rely heuristically on the local solution of the inner-minimization problem to verify the sufficient bound increase condition.
}

\begin{algorithm}[t]
\caption{Prototype bounding-focused discretization algorithm}
\label{alg:prototype_disc}
{
\begin{algorithmic}[1]
\State \textbf{Input}: feasibility tolerance $\varepsilon_{f} \geq 0$, minimum bound improvement $\delta \geq 0$, and initial discretization $Y_d = \emptyset$.

\For{$k = 1, 2, \dots$}

\State Solve problem~\eqref{eqn:disc-lbp} globally to get solution $x^k$, lower bound $LBD^k$.

\State Solve problem~\eqref{eqn:llp} with $x = x^k$ globally to get solution $\hat{y}^k \in Y$.

\If{$G(x^k) \leq \varepsilon_{f}$} 

\State \textbf{Terminate} with $\varepsilon_f$-feasible solution $x^k$ to~\eqref{eqn:sip}.

\Else

\State Solve a max-min problem (heuristically) to get $\{\bar{y}^{k,1},\dots,\bar{y}^{k,n_k}\}$.

\State Solve the inner-min problem to \textit{global} optimality at the max-min

\Statex \hspace*{0.4in} solution $\{\bar{y}^{k,1},\dots,\bar{y}^{k,n_k}\}$. Let $\eta^*_k$ denote its global optimal value.

\vspace*{0.03in}
\If{$\eta^*_k \geq LBD^k + \delta$}

\State Update the discretization $Y_d$ using $\{\bar{y}^{k,1},\dots,\bar{y}^{k,n_k}\}$.

\Else

\State Set $Y_d \leftarrow Y_d \cup \{ \hat{y}^k \}$.

\EndIf

\vspace*{0.03in}
\EndIf

\EndFor

\end{algorithmic}
}
\end{algorithm}
\subsection{Convergence guarantees}
\label{subsec:convergence_guarantees}

We show the sequence $\{LBD^k\}_k$ of lower bounds determined by Algorithms \texttt{OPT}, \texttt{GREEDY}, \texttt{2GREEDY}, and \texttt{HYBRID} converge to $v^*$ under different assumptions.

Our first result is the most useful one in practice.
It allows our max-min formulations to be solved using \textit{any} heuristic under the following conditions: $(i)$ the inner-minimization problem is solved to global optimality \textit{once} at the candidate max-min solution (see line~9 of Algorithm~\ref{alg:prototype_disc}), and $(ii)$ the minimum bound improvement required at each iteration $\delta > 0$.

\begin{theorem}
\label{thm:conv_lbd_disc}
Consider Algorithm~\ref{alg:prototype_disc} with $\varepsilon_f = 0$, $\delta > 0$. Suppose the {discretization $Y_d$} is updated using Algorithm \texttt{OPT}, \texttt{GREEDY}, \texttt{2GREEDY}, or \texttt{HYBRID}.
Then $\underset{k \to \infty}{\lim} LBD^k = v^*$.
\end{theorem}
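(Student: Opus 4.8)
The plan is to reduce the convergence analysis to the classical argument for the Blankenship--Falk algorithm. Observe that in every iteration of Algorithm~\ref{alg:prototype_disc}, the discretization $Y_d$ is enlarged in one of two ways: either the check on line~10 succeeds and we update $Y_d$ using the max-min points $\{\bar y^{k,i}\}$, or it fails and we add the Blankenship--Falk point $\hat y^k$ (which is a global maximizer of~\eqref{eqn:llp} at $x^k$) on line~13. Critically, since $\varepsilon_f = 0$, the algorithm does not terminate finitely unless it finds a feasible (hence optimal, given the lower-bound property) point, so we may assume it runs for infinitely many iterations. The key structural fact to extract is that the sequence $\{LBD^k\}$ is nondecreasing: in the \texttt{GREEDY}, \texttt{2GREEDY}, and \texttt{HYBRID} cases each new discretization contains the previous one, so $LBD^{k+1} \ge LBD^k$ trivially; in the \texttt{OPT} case (and the \texttt{OPT}-phase of \texttt{HYBRID}) the discretization is replaced, but by construction of~\eqref{eqn:method-2} the new discretization attains the \emph{largest} possible lower bound over all discretizations of its cardinality, which is at least the lower bound from the old (smaller) discretization carried into the initialization, so monotonicity still holds. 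Being nondecreasing and bounded above by $v^*$, the sequence $\{LBD^k\}$ converges to some limit $\bar v \le v^*$.

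Next I would split into two cases according to how often line~10 succeeds. \emph{Case 1:} line~10 succeeds infinitely often. Each such success increases the lower bound by at least $\delta > 0$, i.e.\ $LBD^{k+1} \ge LBD^k + \delta$ along a subsequence, which contradicts the fact that $\{LBD^k\}$ is bounded above by $v^* < \infty$. Hence line~10 can succeed only finitely often. \emph{Case 2:} therefore, for all $k$ past some finite index $K_0$, line~10 fails and the algorithm reverts to line~13, adding the Blankenship--Falk point $\hat y^k = \argmax_{y \in Y} g(x^k, y)$ to $Y_d$. From iteration $K_0$ onward the algorithm thus behaves \emph{exactly} like the Blankenship--Falk Algorithm~\ref{alg:bfdisc} (with $\varepsilon_f = 0$), except that it is warm-started with the larger discretization $Y_d$ accumulated during the first $K_0$ iterations --- and a larger initial discretization only tightens the lower bounds, so it cannot hurt. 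Invoking \citep[Theorem~2.1]{blankenship1976infinitely} (whose hypotheses --- compactness of $X$ and $Y$, continuity of $f$ and $g$, feasibility of~\eqref{eqn:sip} --- are exactly our standing assumptions on~\eqref{eqn:sip}), the sequence of lower bounds generated from iteration $K_0$ onward converges to $v^*$. Since the full sequence $\{LBD^k\}$ is monotone and contains this tail, $\lim_{k\to\infty} LBD^k = v^*$.

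The cleanest way to write Case~2 rigorously, rather than appealing to warm-starting informally, is to reproduce the short Blankenship--Falk compactness argument directly: extract a convergent subsequence $x^{k_j} \to \bar x \in X$ (possible since $X$ is compact). For each pair $k_j < k_l$ in this subsequence, $\hat y^{k_j} \in Y_d$ at iteration $k_l$, so feasibility of $x^{k_l}$ for~\eqref{eqn:disc-lbp} forces $g(x^{k_l}, \hat y^{k_j}) \le 0$; passing to the limit along $l$ and using continuity of $g$ gives $g(\bar x, \hat y^{k_j}) \le 0$ for every $j$. Meanwhile $g(x^{k_j}, \hat y^{k_j}) = G(x^{k_j})$; combining $G(x^{k_j}) = g(x^{k_j},\hat y^{k_j})$, the bound $g(x^{k_l},\hat y^{k_j}) \le 0$, and uniform continuity of $g$ on the compact set $X \times Y$ shows $G(x^{k_j}) \to g(\bar x, \hat y^{k_j})$-type quantities vanish in the limit, yielding $G(\bar x) \le 0$, i.e.\ $\bar x$ is feasible for~\eqref{eqn:sip}. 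Then $f(\bar x) \ge v^*$; but $LBD^{k_j} \le f(x^{k_j}) \to f(\bar x)$ and each $LBD^{k_j} \le v^*$, so $\bar v = \lim LBD^k = f(\bar x) = v^*$.

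The main obstacle I anticipate is handling the \texttt{OPT} realization carefully in two respects: first, establishing that $\{LBD^k\}$ remains nondecreasing even though \texttt{OPT} discards the old discretization --- this needs the observation that \texttt{OPT} is initialized with (points derived from) the previous discretization together with $\hat y^k$, so the locally-solved max-min problem returns a value at least as large as the incumbent $LBD^k$ (and the sufficient-increase gate on line~10 only lets the update through if it does not decrease the bound); and second, the footnote caveat that when~\eqref{eqn:disc-lbp} is nonconvex an extra global solve is invoked purely to validate line~10 --- the theorem statement explicitly permits assuming this for the proof, so I would simply invoke that assumption where needed. A secondary subtlety is the unattainability of the max in~\eqref{eqn:method-2}/\eqref{eqn:method-1} (footnote~\ref{foot:max_unattainable}); but since Case~1 is ruled out regardless and Case~2 never relies on the max-min solves at all, this does not affect the argument.
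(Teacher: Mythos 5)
Your proposal is correct and follows essentially the same route as the paper's proof: the sufficient-increase gate on line~10 can fire only finitely often because each success raises the bound by at least $\delta$ while $v^* - \min_{x\in X} f(x)$ is finite, so the tail of the algorithm coincides with the Blankenship--Falk iteration and its convergence result applies. The only differences are cosmetic --- you spell out monotonicity of $\{LBD^k\}$ and reproduce the BF compactness argument explicitly, whereas the paper cites Lemma~2.2 of Mitsos (cf.\ Theorem~3.1 of Harwood et al.) for that step.
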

\begin{proof}
Since $f$ and $g$ are continuous, $X$ and $Y$ are compact, and~\eqref{eqn:sip} is assumed to be feasible, the optimal value $v^*$ is finite and bounded below by $\min_{x \in X} f(x) > -\infty$.
Line~11 of Algorithm~\ref{alg:prototype_disc} updates the discretization $Y_d$ using the points $\bar{y}^{k,1},\dots,\bar{y}^{k,n_k}$ only if this candidate discretization increases the lower bound in iteration $k+1$ by at least~$\delta$. 
Since $v^* - LBD^0 = v^* - \min_{x \in X} f(x) < \infty$, line~11 of Algorithm~\ref{alg:prototype_disc} can be executed only finitely many times before the lower bound converges to $v^*$.
Therefore, {if Algorithm~\ref{alg:prototype_disc} does not converge in a finite number of iterations, then} line~13 is executed for all $k$ large enough and the asymptotic behavior of Algorithm~\ref{alg:prototype_disc} is the same as that of the BF algorithm~\ref{alg:bfdisc}.
The result that $LBD^k \rightarrow v^*$ then follows from Lemma~2.2 of Mitsos~\cite{mitsos2011global} (cf.\ Theorem~3.1 of Harwood et al.\ \cite{harwood2021note}).
\end{proof}

The remaining results in this section are mainly of theoretical interest since they assume our max-min formulations are solved to \textit{global} optimality (which is in general impractical because this may be as hard as solving~\eqref{eqn:sip} itself).

The following result identifies favorable properties of Algorithm~\texttt{OPT} when the max-min problem~\eqref{eqn:method-2} is solved to global optimality at each iteration.

\begin{proposition}
\label{prop:alg_opt}
Consider Algorithm~\texttt{OPT} with $\varepsilon_f = \delta = 0$, and suppose the {max-min} problem~\eqref{eqn:method-2} is solved to \textit{global} optimality.
Then $\underset{k \to \infty}{\lim} LBD^k = v^*$.
Moreover, suppose \eqref{eqn:sip} is convex and $\exists \bar{x} \in X$ such that $G(\bar{x}) < 0$.
Then $LBD^k = v^*$ for each iteration $k \geq d_x$, i.e., Algorithm~\texttt{OPT} converges in at most $d_x$ iterations.
\end{proposition}

Our next results establish convergence rates of the sequence of lower bounds generated using Algorithms BF and \texttt{OPT}.
We begin with the following result on the number of iterations required for the BF algorithm to converge.

\begin{proposition}
\label{prop:convrate_bf}
Consider the BF algorithm~\ref{alg:bfdisc} with $\varepsilon_f > 0$.
Suppose $\{g(\cdot,y)\}_{y \in Y}$ is uniformly Lipschitz continuous on $X$ with Lipschitz constant $L_{g,x} > 0$, i.e.,
\[
\abs{g(x,y) - g(\bar{x},y)} \leq L_{g,x} \norm{x - \bar{x}}, \quad \forall x, \bar{x} \in X, \: y \in Y.
\]
Additionally, suppose $\{g(x,\cdot)\}_{x \in X}$ is uniformly Lipschitz continuous on $Y$ with Lipschitz constant $L_{g,y} > 0$, i.e.,
\[
\abs{g(x,y) - g(x,\bar{y})} \leq L_{g,y} \norm{y - \bar{y}}, \quad \forall y, \bar{y} \in Y, \: x \in X.
\]
Then Algorithm~\ref{alg:bfdisc} terminates in at most $N$ iterations, where
\[
N := \min\bigg\{\bigg\lceil\left(\frac{\textup{diam}(Y) L_{g,y}}{\varepsilon_f} + 1\right)^{d_y} \bigg\rceil, \bigg\lceil\left(\frac{\textup{diam}(X) L_{g,x}}{\varepsilon_f} + 1\right)^{d_x}\bigg\rceil\bigg\}.
\]
\end{proposition}

The estimate of the number of iterations needed for the BF algorithm to converge depends exponentially on the dimensions of~\eqref{eqn:sip}.
When $d_y \ll d_x$, as in many applications, the term $\big(\tfrac{\textup{diam}(Y) L_{g,y}}{\varepsilon_f} + 1\big)^{d_y}$ may be significantly smaller than $\big(\tfrac{\textup{diam}(X) L_{g,x}}{\varepsilon_f} + 1\big)^{d_x}$.
Proposition~\ref{prop:convrate_bf} may be sharpened by estimating the number of balls needed to cover an enlargement of $\{y^*(x) : x \in X\}$ instead of an enlargement of~$Y$.
We now link Proposition~\ref{prop:convrate_bf} to the convergence rate of the sequence of lower bounds furnished by Algorithms BF and \texttt{OPT}.

\begin{theorem}
\label{thm:calmness}
Suppose the value function $V(z) := \min \big\{ f(x) : x \in X, \: G(x) \leq z \big\}$
is Lipschitz continuous on $[0,\bar{\varepsilon}]$ with Lipschitz constant $L_V > 0$, {for some $\bar{\varepsilon} > 0$}.
Additionally, {suppose $\varepsilon_f = 0$, the family $\{g(\cdot,y)\}_{y \in Y}$ is uniformly Lipschitz continuous on $X$ with Lipschitz constant $L_{g,x} > 0$, and the family $\{g(x,\cdot)\}_{x \in X}$ is uniformly Lipschitz continuous on $Y$ with Lipschitz constant $L_{g,y} > 0$.}
Consider Algorithm~\texttt{OPT} with $\delta = 0$, and assume that the max-min problem~\eqref{eqn:method-2} is solved to global optimality.
Then, for any $\varepsilon \in (0,L_V\bar{\varepsilon})$, the lower bounds produced by Algorithms BF and \texttt{OPT} exceed $v^* - \varepsilon$ whenever $k \geq \min\left\lbrace\left\lceil\left(\frac{\textup{diam}(Y) L_{g,y} L_V}{{\varepsilon}} + 1\right)^{d_y}\right\rceil, \left\lceil\left(\frac{\textup{diam}(X) L_{g,x} L_V}{{\varepsilon}} + 1\right)^{d_x}\right\rceil\right\rbrace$.
\end{theorem}

The Lipschitz assumption in Theorem~\ref{thm:calmness} is satisfied by convex SIPs when the objective $f$ is Lipschitz and Slater's condition holds (see Corollary~2 to Theorem~6.3.2 in Clarke~\cite{clarke1990optimization}).
Chapter~6 of Clarke~\cite{clarke1990optimization} also details other constraint qualifications under which this Lipschitz assumption holds.

The following example illustrates that the above rate of convergence cannot be improved in general.
Specifically, this example shows that discretization-based lower bounding methods involving~\eqref{eqn:disc-lbp} (such as the BF algorithm and \textit{any} realization of Algorithm~\ref{alg:prototype_disc}, including \texttt{OPT}) may require an exponential number of discretization points in the problem dimensions to converge.
Although this behavior is expected, we are not aware of such an example in the SIP literature (similar examples are known for the classical cutting-plane method in convex optimization, see~\citep[Ex.\ 3.3.1]{nesterov2018lectures},~\citep[Ex.\ 1]{hijazi2014outer}).

\begin{example}{(Based on~\citep[Example 1]{hijazi2014outer})}
\label{exm:hijazi}
Consider~\eqref{eqn:sip} with $X = [-1,1]^{d_x}$, {$Y = \{y \in \R^{d_x} : \norm{y}^2 = d_x-1\}$}, $f(x) = -\norm{x}^2$, and $g(x,y) = \sum_{i=1}^{d_x} (x_i - y_i)y_i$.
Note that this semi-infinite constraint~\eqref{eqn:sic} may be reformulated as the convex constraint $\norm{x} \leq \sqrt{d_x-1}$.
Any $x \in X$ with $\norm{x} = \sqrt{d_x - 1}$ solves~\eqref{eqn:sip} with $v^* = 1 - d_x$.

Lemma~2.1 of Hijazi et al.\ \cite{hijazi2014outer} implies any discretization point $y \in Y$ can exclude at most one vertex of the cube $X$.
Because every vertex of $X$ is a solution to~\eqref{eqn:disc-lbp} with the discretization $Y_d = \emptyset$, \textit{any} discretization-based lower bounding algorithm that solves~\eqref{eqn:disc-lbp} requires exponentially many discretization points in the dimension $d_x$ for its sequence of lower bounds $\{LBD^k\}_k$ to exceed $v^* - 0.5$.
\end{example}

We end this section by demonstrating the sequence $\{LBD^k\}_k$ of lower bounds determined by Algorithm \texttt{GREEDY} \textit{may not} converge to $v^*$ when $\delta = 0$ (i.e., if the fallback to the BF algorithm on line~13 of Algorithm~\ref{alg:prototype_disc} is not used), even if the max-min problems~\eqref{eqn:method-1} are always solved to global optimality.

\vspace*{0.1in}
\noindent Example~\ref{exm:hijazi}: Consider Algorithm~\texttt{GREEDY} with $\varepsilon_f = \delta = 0$, and suppose the max-min problems~\eqref{eqn:method-1} are solved to global optimality at each iteration.
Assume without loss of generality that the incumbent solution $x^1 = \mathbf{1}$ at the first iteration, where $\mathbf{1}$ denotes a vector of ones.
Then $\hat{y}^1 = \sqrt{1 - \tfrac{1}{d_x}} \mathbf{1}$.
The sequence of vectors $\{\bar{y}^k\}$ with $\bar{y}^j = -\hat{y}^1$ for each $j \geq 1$ is \textit{one} sequence of optimal solutions to the max-min problems~\eqref{eqn:method-1} solved by Algorithm~\texttt{GREEDY}. However, $LBD^k = -d_x < v^*$ for each $k \geq 1$ since $x^1$ remains the incumbent solution at each iteration $k$.
Therefore, Algorithm~\texttt{GREEDY} \textit{may not} converge when $\delta = 0$.

\subsection{Solving the max-min problems}
\label{subsec:solving_max_min}

While solving the max-min problems~\eqref{eqn:method-2} or~\eqref{eqn:method-1} to global optimality is clearly desirable, this may be as difficult as solving~\eqref{eqn:sip} itself. 
Hence, we exploit the fact that solving these problems heuristically is sufficient to obtain a discretization with desirable convergence properties (see Theorem~\ref{thm:conv_lbd_disc} and the discussion in Section~\ref{subsec:outline-opt-disc-alg}).
Numerical experiments in Section~\ref{sec:numerical_results} empirically demonstrate that our heuristic approaches for solving problems~\eqref{eqn:method-2} or~\eqref{eqn:method-1} almost always yield better discretizations with faster rate of convergence than the BF algorithm.

\paragraph*{Properties of the value functions~$\phi_k$ and~$\psi_k$ in problems~\eqref{eqn:method-2} and~\eqref{eqn:method-1}}
Before we outline our approach for solving problems~\eqref{eqn:method-2} and~\eqref{eqn:method-1}, we plot the value functions~$\phi_k$ and $\psi_k$ of these problems for some examples from the literature.
We consider the following three examples in addition to Example~\ref{exm:dp}.

\begin{example}{\citep[Example 2.1]{seidel2020adaptive}}
\label{exm:seidel}
Consider~\eqref{eqn:sip} with $d_x = 2$, $d_y = 1$, $X = [-1,1]^2$, $Y = [-1,1]$, $f(x) = -x_1 + 1.5x_2$, and $g(x,y) = -y^2_1 + 2y_1x_1 - x_2$.
The global solution is $x^* = \bigl( \frac{1}{3}, \frac{1}{9} \bigr)$ with $v^* = -\frac{1}{6}$.
\end{example}

\begin{example}{\citep[Example 2.1]{tsoukalas2011feasible}}
\label{exm:tsou}
Consider~\eqref{eqn:sip} with $d_x = 1$, $d_y = 1$, $X = [-6,6]$, $Y = [-6,6]$, $f(x) = 10 - x_1$, and $g(x,y) = -x^4_1 + x^2_1 - x^2_1y^2_1 + 2x^3_1y_1 - 4$. 
The global solution is $x^* = 2$ with $v^* = 8$.
\end{example}

\begin{example}{\citep[Example (H)]{mitsos2009test}}
\label{exm:wath}
Consider~\eqref{eqn:sip} with $d_x = 2$, $d_y = 1$, $X = [0,1] \times [-10^3, 10^3]$, $Y = [-1,1]$, $f(x) = x_2$, and $g(x,y) = -(x_1 - y_1)^2 - x_2$.
Any $x^* = (\bar{x}_1, 0)$ with $\bar{x}_1 \in [0,1]$ is a global solution with optimal value $v^* = 0$.
\end{example}

\begin{figure}[t]
\centering
\begin{subfigure}{0.245\textwidth}
\includegraphics[width=0.98\columnwidth]{./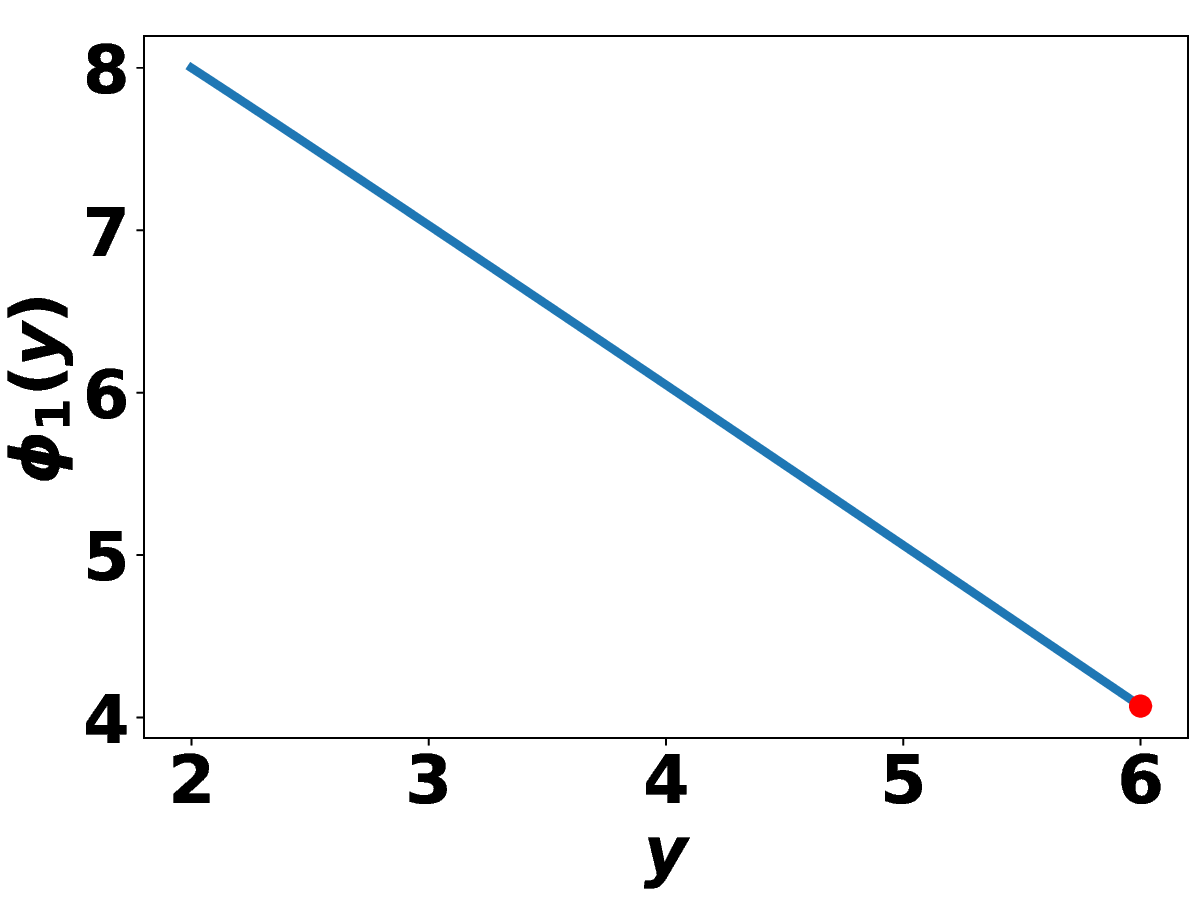}
\caption{Example~\ref{exm:dp}}
\end{subfigure}%
\begin{subfigure}{0.245\textwidth}
\includegraphics[width=0.98\columnwidth]{./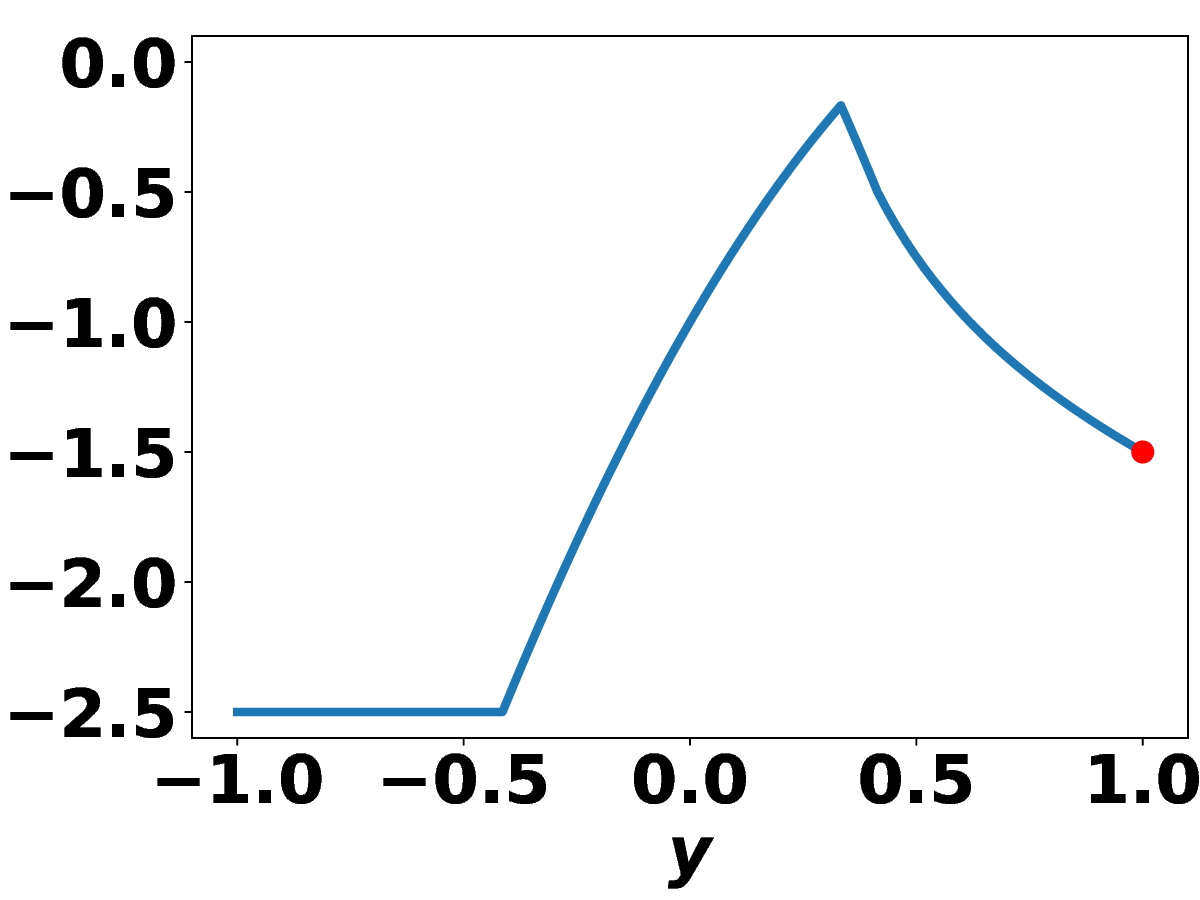}
\caption{Example~\ref{exm:seidel}}
\end{subfigure}%
\begin{subfigure}{0.245\textwidth}
\includegraphics[width=0.98\columnwidth]{./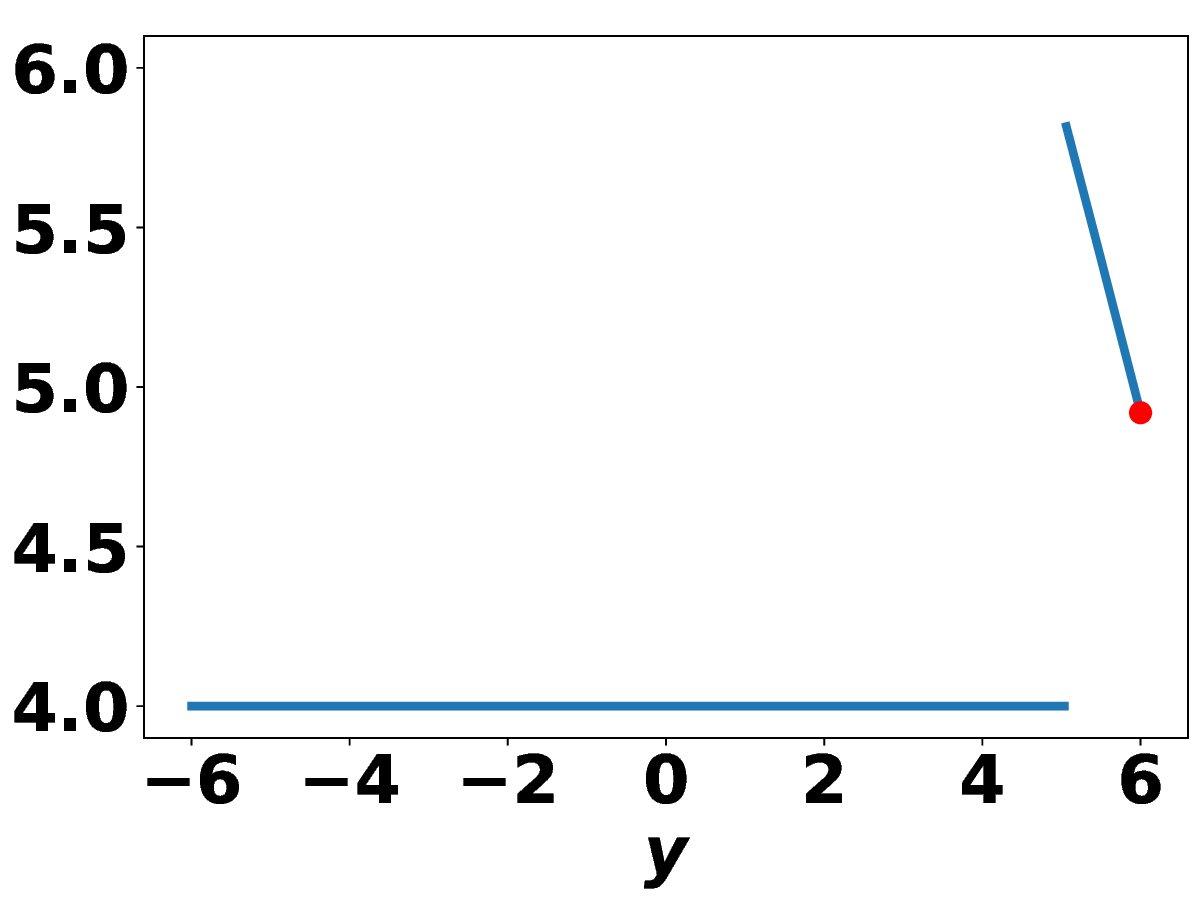}
\caption{Example~\ref{exm:tsou}}
\end{subfigure}%
\begin{subfigure}{0.245\textwidth}
\includegraphics[width=0.98\columnwidth]{./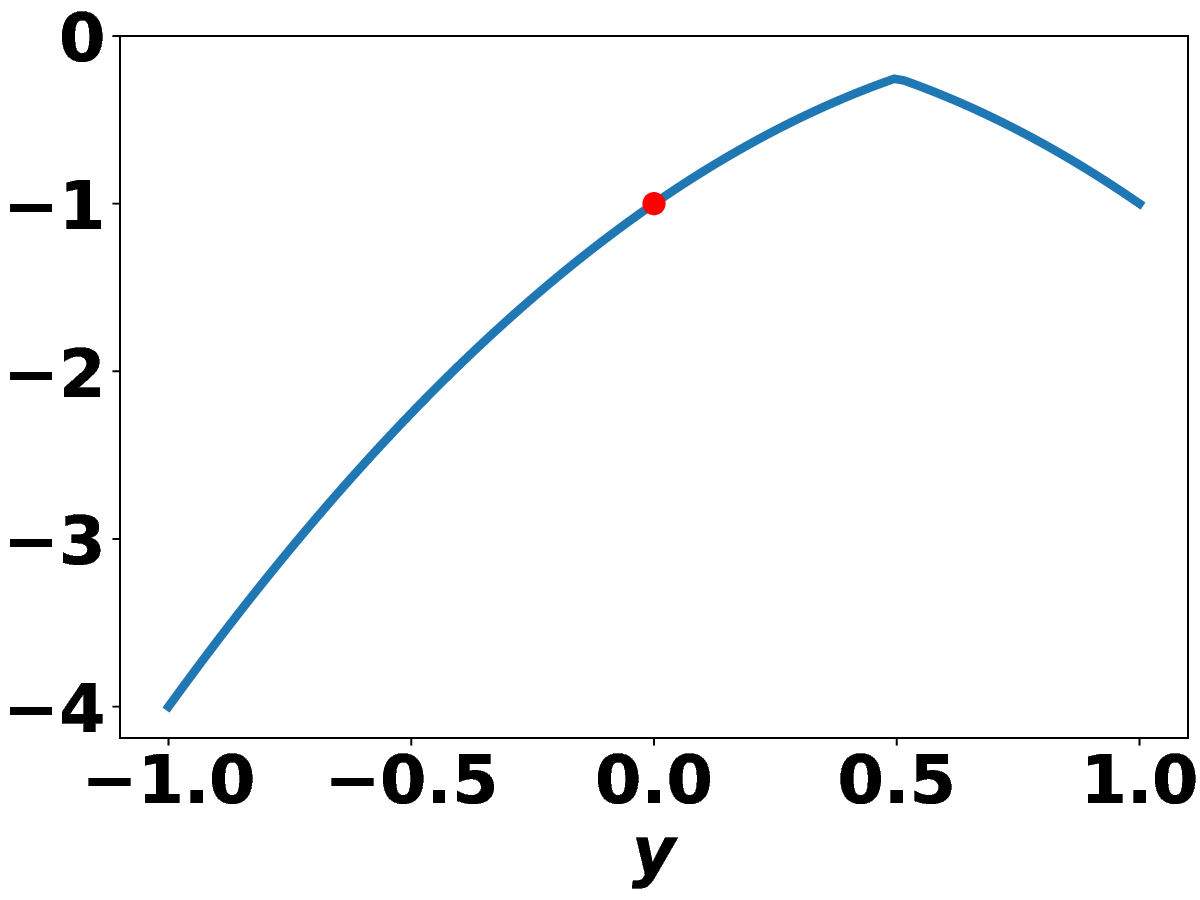}
\caption{Example~\ref{exm:wath}}
\end{subfigure}
\caption{Value functions $\phi_1$ (note the discontinuity for Example~\ref{exm:tsou}). The red dot indicates $\phi_1$ at the point $y^{BF,1}$ determined by the BF algorithm at iteration~$1$.}
\label{fig:value-funcs}
\end{figure}

Figure~\ref{fig:value-funcs} plots the (global) value function~$\phi_1$ for Examples~\ref{exm:dp},~\ref{exm:seidel},~\ref{exm:tsou}, and~\ref{exm:wath}.
It illustrates that $\phi_1$ may be nonconcave, nondifferentiable, or even discontinuous with large flat regions.
Additionally, the supremum in problem~\eqref{eqn:max_min_iter1} is not attained for Example~\ref{exm:tsou}.
Figure~\ref{fig:value-funcs} also empirically illustrates that the BF point $y^{BF,1}$ provides a good initial guess for solving problem~\eqref{eqn:max_min_iter1}.

Figure~\ref{fig:value-funcs_2} shows that the (global) value functions $\psi_k$ in Algorithm~\texttt{GREEDY} may become increasingly challenging to optimize over as $k$ increases\footnote{We do not plot $\psi_2$ and $\psi_3$ for Examples~\ref{exm:dp} and~\ref{exm:seidel} as $\uset{y \in Y}{\max} \: \phi_1(y) = v^*$ for these instances.}; however, solving the lower-level problem~\eqref{eqn:llp} at incumbent (lower bounding) solutions {$x^k$} empirically continues to yield a good initial guess {$\hat{y}^k$}.

Overall, Figures~\ref{fig:value-funcs} and~\ref{fig:value-funcs_2} highlight that exploiting (generalized) gradient information can yield effective heuristics for solving problems~\eqref{eqn:method-2} and~\eqref{eqn:method-1}.

\begin{figure}[t]
\centering
\begin{subfigure}{\textwidth}
\includegraphics[width=0.30\linewidth]{./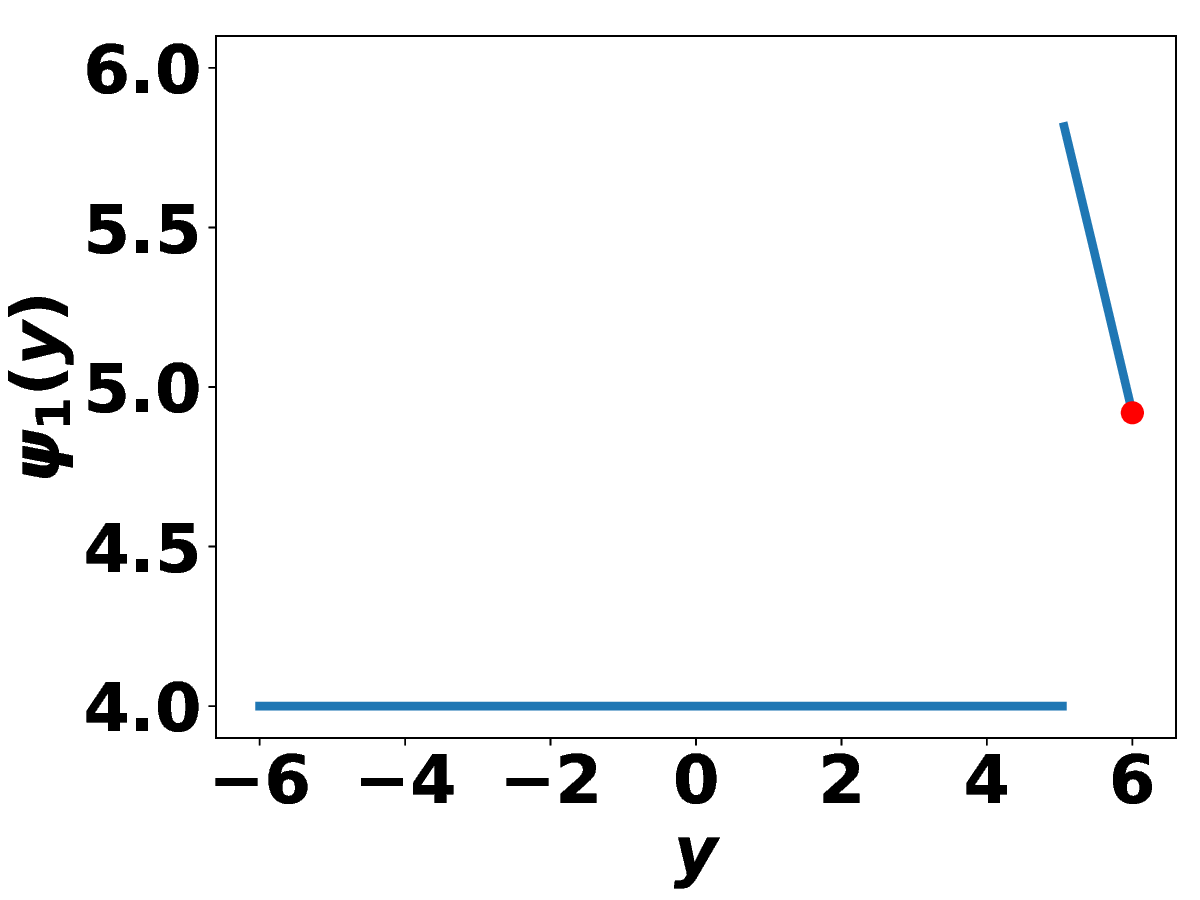}%
\hfill
\includegraphics[width=0.30\linewidth]{./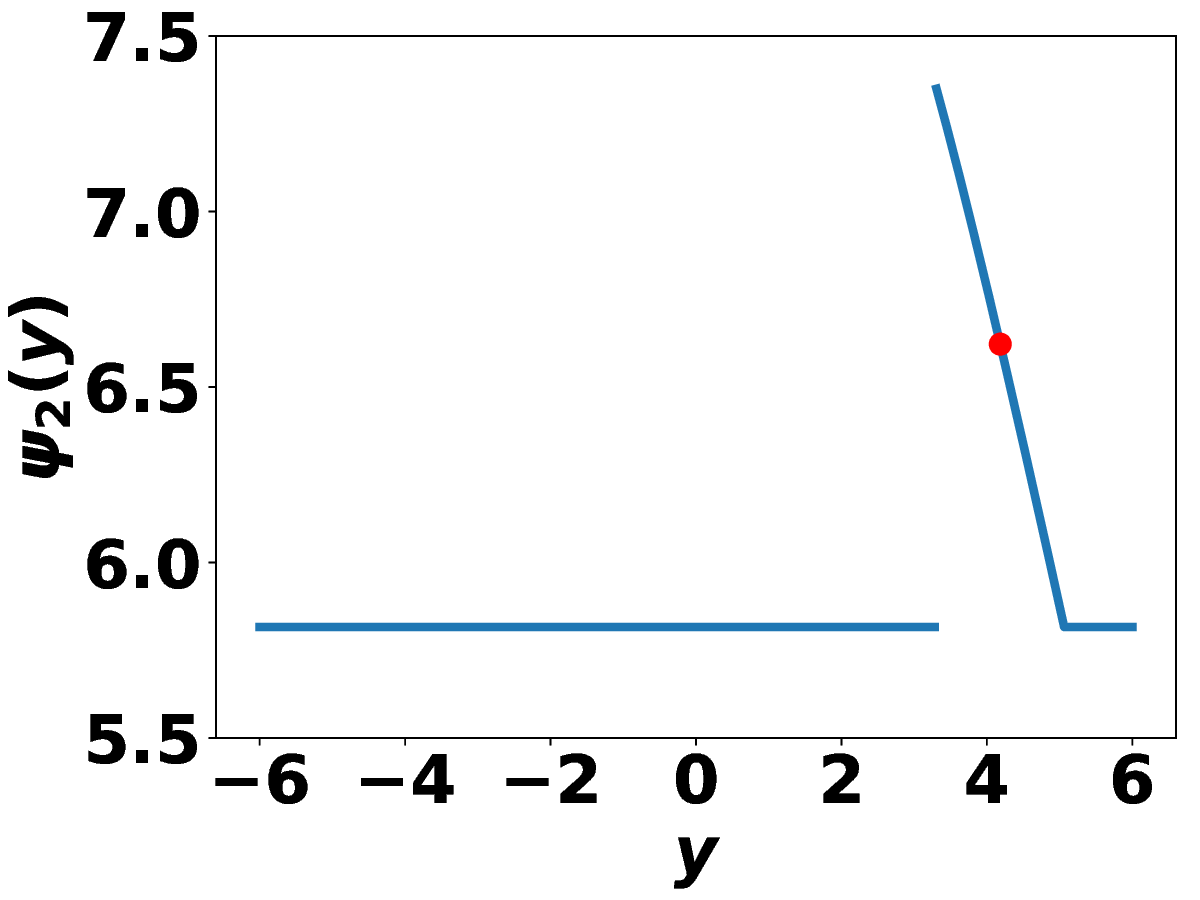}%
\hfill
\includegraphics[width=0.30\linewidth]{./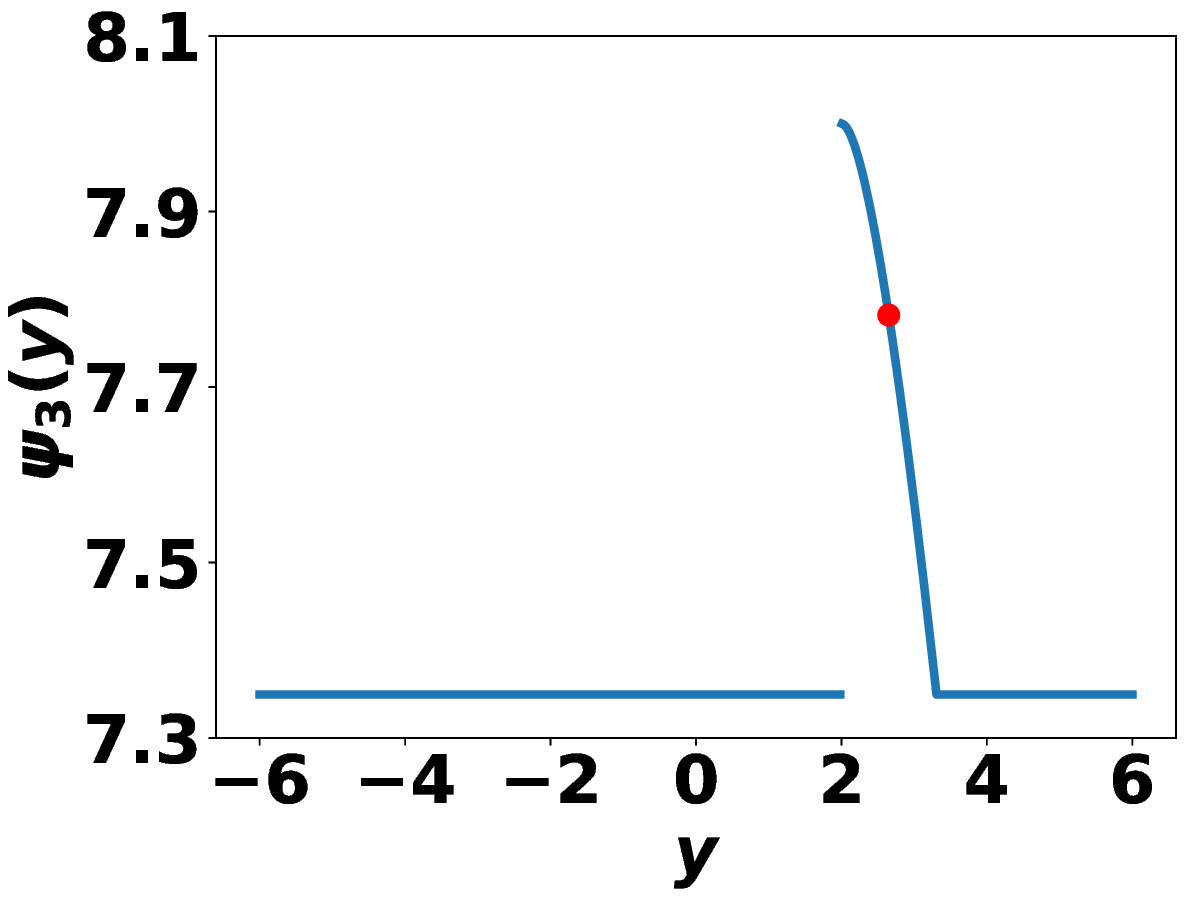}
\caption{Example~\ref{exm:tsou}}
\end{subfigure}\\
\begin{subfigure}{\textwidth}
\includegraphics[width=0.30\linewidth]{./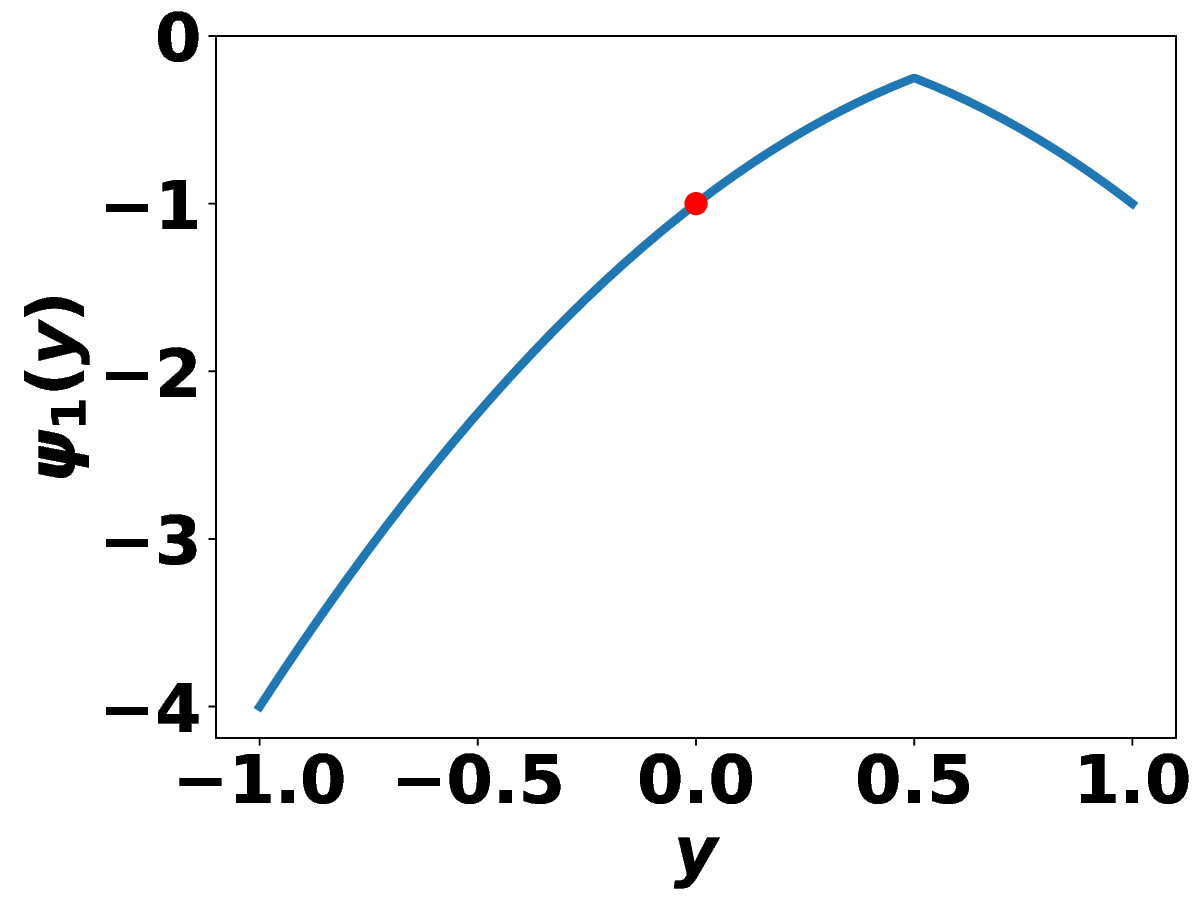}%
\hfill
\includegraphics[width=0.30\linewidth]{./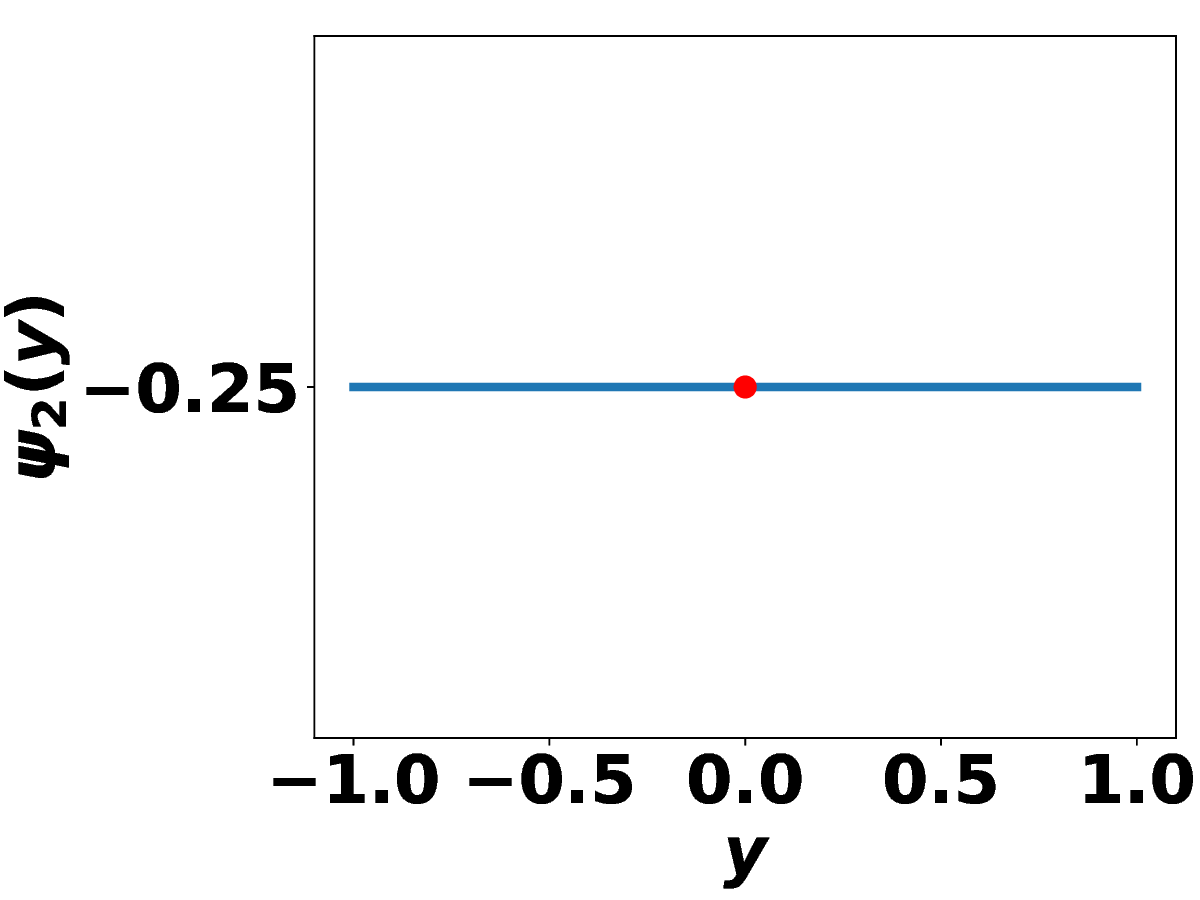}%
\hfill
\includegraphics[width=0.30\linewidth]{./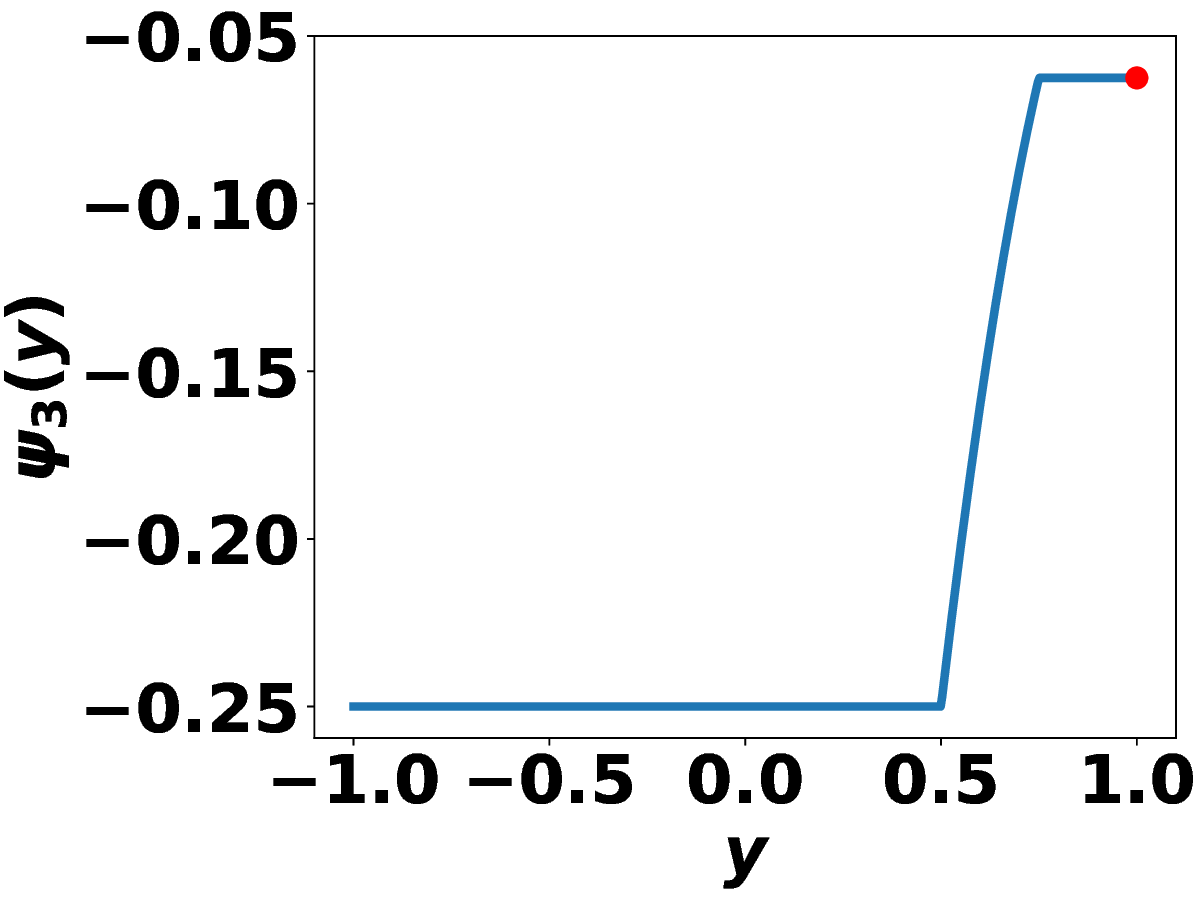}
\caption{Example~\ref{exm:wath}}
\end{subfigure}
\caption{Value functions $\psi_1, \psi_2, \psi_3$ in the first three iterations of problem~\eqref{eqn:method-1} for Examples~\ref{exm:tsou} and~\ref{exm:wath} (all three functions are discontinuous for Example~\ref{exm:tsou}). The red dot indicates $\psi_k$ at the initial guess $\hat{y}^k$ (see line 4 of Algorithm~\ref{alg:prototype_disc}) for problem~\eqref{eqn:method-1} when the discretization is set using Algorithm~\texttt{GREEDY} in Section~\ref{subsec:outline-opt-disc-alg}.
}
\label{fig:value-funcs_2}
\end{figure}

\paragraph*{An effective heuristic solution approach}

Due to the potential nonsmooth and discontinuous nature of the functions $\phi_k$ and $\psi_k$, we propose to solve the max-min problem of Algorithm \ref{alg:prototype_disc} using gradients (whenever they exist) of $\phi_k$ and $\psi_k$ within a bundle solver for nonsmooth nonconvex optimization~\cite{makela2003multiobjective}.
Each iteration of the bundle method requires function and generalized gradient evaluations.
We estimate the values of the functions $\phi_k$ and $\psi_k$ by solving the inner-minimization problems in problems~\eqref{eqn:method-2} and~\eqref{eqn:method-1} to \textit{local} optimality.
We then try and use Theorem~\ref{thm:parametric_sens} in Appendix~\ref{sec:sensitivity_theory} to compute gradients of the local minimum value function $\phi_k$ or $\psi_k$ (this involves the solution of a linear system of equations) when its assumptions hold. 
If some of the assumptions of Theorem~\ref{thm:parametric_sens} do not hold during the solution of problem~\eqref{eqn:method-2} or~\eqref{eqn:method-1}, we try and use the heuristics detailed below to estimate a generalized gradient of $\phi_k$ or $\psi_k$.
Note that we terminate the solution of the max-min problem and return its best found solution if the local solver fails to successfully solve the inner-minimization problem  at any step.

\paragraph*{Heuristics for estimating a generalized gradient of $\phi_k$ or $\psi_k$}

Whenever its assumptions hold, we use Theorem~\ref{thm:parametric_sens} to compute a gradient of the functions $\phi_k$ and $\psi_k$ in problems~\eqref{eqn:method-2} and~\eqref{eqn:method-1}. 
In practice, we find that {all of these assumptions} may not hold at \textit{every} iterate of the max-min solution. 

{When only the SC condition fails, the Lagrange multipliers are not unique. If SSOSC holds for all dual variables, then the value functions $\phi_k$ and $\psi_k$ are piecewise-differentiable, with a kink at the evaluation point due to changes in the active constraint set. In this case, a directional derivative can be computed by appropriately selecting a subset of weakly active constraints~\citep[Theorem~1]{ralph1995directional}. 
Rather than explicitly checking whether only SC fails, we always handle weakly active constraints using the following heuristic procedure.
}
We exclude weakly active bound constraints by default since including them causes the derivative of the variable at the bound to be zero. Based on numerical experiments, we heuristically choose to include all other weakly active constraints.
An alternative to the above heuristic is to use the results of Stechlinski et al.\ \cite{stechlinski2019generalized} to rigorously compute generalized gradients when SC does not hold; we do not adopt this approach because of its higher computational cost.

{If either $J_z(p)$ or $H_{z,\lambda}$ in Theorem~\ref{thm:parametric_sens} is singular (e.g., if LICQ does not hold), we add a small {regularization} term before solving the corresponding linear system to determine sensitivities. If $H_{z,\lambda}$ is non-square, we follow the heuristic in Section~4.3 of Agrawal et al.\ \cite{agrawal2019differentiable} and compute the minimum-norm least squares solution to the linear system.
If an error occurs during the solution of the {max-min} problem (e.g., the local optimizer fails to converge), then we terminate the solution of the {max-min} problem and return the best found solution.}

\paragraph*{{Enhancements to the heuristic method for solving max-min problems}} 
We list ways in which Algorithms \texttt{OPT}, \texttt{GREEDY},  \texttt{2GREEDY}, and \texttt{HYBRID} can be enhanced in practice.
First, the solution of the sequence of inner-minimization problems in~\eqref{eqn:method-2} or~\eqref{eqn:method-1} can be effectively warm-started using active-set methods.
Second, solving these inner-minimization problems using multi-start techniques can increase the likelihood that we compute the global value functions $\phi_k$ and $\psi_k$.
Finally, as we saw in Examples~\ref{exm:seidel} to~\ref{exm:wath}, the assumptions of Theorem~\ref{thm:parametric_sens} may not hold throughout the domain of $\phi_k$ and $\psi_k$.
In such situations, we may either return the best found solution to problems~\eqref{eqn:method-2} and~\eqref{eqn:method-1}, or randomly perturb the current iterate in an attempt to avoid points of nondifferentiability.

\paragraph*{{Alternative approaches for solving the max-min problems~\eqref{eqn:method-2} and~\eqref{eqn:method-1}}}
If the inner-minimization problems in~\eqref{eqn:method-2} and~\eqref{eqn:method-1} are convex and satisfy a constraint qualification (e.g., Slater's condition), then we might be able to use strong duality to reformulate problems~\eqref{eqn:method-2} and~\eqref{eqn:method-1} into single-level maximization problems.
The resulting problems can then be solved to local optimality to update the discretization~$Y_d$.
Alternatively, suppose the functions in~\eqref{eqn:sip} are continuously differentiable and the inner-minimization problems in~\eqref{eqn:method-2} and~\eqref{eqn:method-1} satisfy a constraint qualification for each feasible point of the outer-maximization problems.
Then we can {relax} problems~\eqref{eqn:method-2} and~\eqref{eqn:method-1} into maximization problems with complementarity constraints using the KKT conditions for the inner-minimization problems, which can then be solved locally to update $Y_d$ (cf.\ \cite{stein2003solving}). 
We compare our heuristic approach for solving problems~\eqref{eqn:method-2} and~\eqref{eqn:method-1} with this KKT-based {relaxation} approach in Section~\ref{sec:numerical_results}.

When the assumptions of Theorem~\ref{thm:parametric_sens} fail to hold, we may also be able to either use generalized gradients~\cite{dempe2017bilevel,mordukhovich2009subgradients} of $\phi_k$ and $\psi_k$, or directional derivatives~\cite{ralph1995directional} of $\phi_k$ and $\psi_k$ and their generalizations~\cite{stechlinski2018generalized,stechlinski2019generalized} to solve problems~\eqref{eqn:method-2} and~\eqref{eqn:method-1} heuristically~\cite{burke2020gradient}.
Techniques for minimizing discontinuous functions (see, e.g., Ermoliev et al.\ \cite{ermoliev1995minimization}) may also be used to maximize $\phi_k$ and $\psi_k$ over their domains when none of the aforementioned approaches are applicable.

\section{Generalized bounding-focused discretizations}
\label{sec:generalized_discretization}

We propose new \textit{generalized} bounding-focused discretization methods for \eqref{eqn:sip} that can achieve faster rate of convergence of the sequence of lower bounds than \textit{any} traditional discretization method relying on~\eqref{eqn:disc-lbp}.

To motivate these generalized discretization methods, note that~\eqref{eqn:sip} can be equivalently reformulated as the bilevel problem~\cite{dempe2002foundations,stein2003bi}: 
\begin{align}
\label{eqn:bilevel_reform}
\min_{x \in X} \:\: & f(x) \\
\text{s.t.} \:\: & g(x,y^*(x)) \leq 0, \nonumber
\end{align}
where $y^* : X \to Y$ maps $x \in X$ to \textit{an} optimal solution to~\eqref{eqn:llp}.

The BF algorithm~\ref{alg:bfdisc} may be viewed as approxi\-mating $y^*(x)$ with solutions $\{y^*(x^k)\}_k$ of the lower-level problem~\eqref{eqn:llp} at incumbent solutions $\{x^k\}_k$ to~\eqref{eqn:disc-lbp}.
This zeroth-order approximation of $y^*$ may be crude.
Assuming $y^*$ is differentiable (cf.\ Theorem~\ref{thm:parametric_sens}), Seidel and K{\"u}fer~\cite{seidel2020adaptive} and Djelassi~\cite{djelassi2020discretization} instead propose to use the first-order approximation $y^*(x) \approx y^*(x^k) + J^*_y(x^k) (x - x^k)$ at incumbent solutions $\{x^k\}_k$, where $J^*_y(x^k)$ is the $d_y \times d_x$ Jacobian matrix with rows $\tr{(\nabla_x y^*_1(x^k))}, \dots, \tr{(\nabla_x y^*_{d_y}(x^k))}$.
In particular, Section~3.4 of Djelassi~\cite{djelassi2020discretization} suggests the following generalization of the lower bounding problem~\eqref{eqn:disc-lbp}:
\begin{align}
\label{eqn:higher-order-lbp}
\min_{x \in X} \:\: & f(x) \tag{G-LBP} \\
\text{s.t.} \:\: & g\bigl(x,\text{proj}_Y(A x + b)\bigr) \leq 0, \quad \forall (A,b) \in Y^G_d, \nonumber
\end{align}
where tuples $(A,b)$ in the generalized discretization $Y^G_d$ satisfy $A \in \R^{d_y \times d_x}$ and $b \in \R^{d_y}$.
Setting $Y^G_d = \{(0,y^{BF,1}), \dots, (0,y^{BF,k})\}$ recovers the BF lower bounding problem at iteration $k$. 
Djelassi~\cite{djelassi2020discretization} proposes to improve the BF lower bound using the generalized discretization $Y^G_d = \{(J^*_y(x^1), y^*(x^1) - J^*_y(x^1) x^1), \dots,$ $(J^*_y(x^k), y^*(x^k) - J^*_y(x^k) x^k)\}$ at iteration~$k$ given a sequence of candidate solutions $\{x^k\}_k \subset X$ to~\eqref{eqn:sip}.
The projection step ensures~\eqref{eqn:higher-order-lbp} is a relaxation of~\eqref{eqn:sip}, which implies solving~\eqref{eqn:higher-order-lbp} to \textit{global} optimality yields a lower bound on the optimal value $v^*$.
However, it also makes~\eqref{eqn:higher-order-lbp} nonsmooth and more challenging to solve than~\eqref{eqn:disc-lbp}.

We propose bounding-focused generalized discretizations of~\eqref{eqn:sip} in the form of~\eqref{eqn:higher-order-lbp} that can achieve faster rate of convergence of lower bounds than the discretization methods in Section~\ref{sec:accelerated_discretization}.
Our key idea is to populate the generalized discretization $Y^G_d$ with tuples $(A,b)$ that yield the highest lower bound.
In the first iteration, we solve the following max-min problem to determine a generalized discretization $Y^G_d = \{(\bar{A}^1,\bar{b}^1)\}$ that results in the highest lower bound:
\begin{align}
\label{eqn:genl_max_min_iter1}
(\bar{A}^1, \bar{b}^1) \in \argmax_{A^1 \in \R^{d_y \times d_x}, b^1 \in \R^{d_y}} \: & \min_{x \in X} \:\: f(x) \\
& \:\:\:\text{s.t.} \:\: g\bigl(x,\text{proj}_Y(A^1 x + b^1)\bigr) \leq 0. \nonumber
\end{align}
We again assume for simplicity that the maxima in all of our subproblems is attained.
We propose the following extensions of problems~\eqref{eqn:method-2} and~\eqref{eqn:method-1} for updating the bounding-focused generalized discretization at iteration $k > 1$. 

The first approach discards the generalized discretization $Y^{G,k-1}_d$ at {iteration}~$k-1$ and determines a fresh generalized discretization at iteration~$k$ by solving the max-min problem:
\begin{align}
\label{eqn:genl_method-2}
(\bar{A}^1,\bar{b}^2,\dots,\bar{A}^k,\bar{b}^k) \in \uset{b^1,\dots,b^k \in \R^{d_y}}{\argmax_{A^1,\dots,A^k \in \R^{d_y \times d_x}}} &\phi^G_k(A^1,b^1,\dots,A^k,b^k), \\
\phi^G_k(A^1,b^1,\dots,A^k,b^k) := \:\:  \min_{x \in X} \:\: &f(x) 
\nonumber \\
\:\:\:\text{s.t.} \:\: &g\bigl(x,\text{proj}_Y(A^i x + b^i)\bigr) \leq 0, \quad \forall i \in [k]. \nonumber
\end{align}
The resulting generalized discretization $Y^{G,k}_d := \{ (\bar{A}^1, \bar{b}^1), \dots, (\bar{A}^k, \bar{b}^k) \}$ at iteration $k$ yields the highest lower bound among all possible relaxations \eqref{eqn:higher-order-lbp} with {$\abs{Y^G_d} = k$}. 
To mitigate the computational cost of solving problem~\eqref{eqn:genl_method-2}, our second approach updates the generalized discretization $Y^{G,k-1}_d := \{ (\bar{A}^1, \bar{b}^1), \dots, (\bar{A}^{k-1}, \bar{b}^{k-1}) \}$ at iteration $k-1$ by adding a single tuple $(\bar{A}^k,\bar{b}^k)$ that maximizes lower bound improvement.
This can be formulated as:
\begin{align}
\label{eqn:genl_method-1}
(\bar{A}^k, \bar{b}^k) \in \argmax_{A^k \in \R^{d_y \times d_x}, b^k \in \R^{d_y}}  &\psi^G_k\bigl(A^k, b^k;Y^{G,k-1}_d\bigr), \\
\psi^G_k\bigl(A^k,b^k; Y^{G,k-1}_d\bigr) := \:\: \min_{x \in X} \:\: & f(x) \nonumber \\
\:\:\:\text{s.t.} \:\: & g(x,\text{proj}_Y(A x + b)) \leq 0, \quad \forall (A,b) \in Y^{G,k-1}_d, \nonumber \\*
\quad\quad\:\: & g(x,\text{proj}_Y(A^k x + b^k)) \leq 0. \nonumber
\end{align}
Problem~\eqref{eqn:genl_method-1} can be viewed as a greedy approximation of problem~\eqref{eqn:genl_method-2}.
We {propose} two variants of these bounding-focused generalized discretization methods in Section~\ref{subsec:genl_algorithm_outlines} and establish theoretical guarantees in  Section~\ref{subsec:genl-convergence-guarantees}.

\begin{algorithm}[t]
\caption{Prototype bounding-focused generalized discretization method}
\label{alg:prototype_genl_disc}
{
\begin{algorithmic}[1]
\State \textbf{Input}: feasibility tolerance $\varepsilon_{f} \geq 0$, minimum bound improvement $\delta \geq 0$, and initial generalized discretization $Y^G_d = \emptyset$.

\For{$k = 1, 2, \dots$}

\State Solve problem~\eqref{eqn:higher-order-lbp} globally to get solution $x^k$, lower bound $LBD^k$.

\State Solve problem~\eqref{eqn:llp} with $x = x^k$ globally to get solution $y^*(x^k)$.

\State If assumptions of Theorem~\ref{thm:parametric_sens} hold for~\eqref{eqn:llp} with $x = x^k$, compute 

\Statex \hspace*{0.2in} the Jacobian matrix $J^*_y(x^k)$.

\If{$G(x^k) \leq \varepsilon_{f}$} 

\State \textbf{Terminate} with $\varepsilon_f$-feasible solution $x^k$ to~\eqref{eqn:sip}.

\Else

\State Solve a max-min problem (heuristically) to get candidate

\Statex \hspace*{0.4in} generalized discretization tuples $\{(\bar{A}^{k,1},\bar{b}^{k,1}), \dots, (\bar{A}^{k,n_k},\bar{b}^{k,n_k}) \}$.

\State Solve the inner-min problem to \textit{global} optimality at the above

\Statex \hspace*{0.4in} candidate solution.
Let $\eta^*_k$ denote its global optimal value.

\vspace*{0.03in}
\If{$\eta^*_k \geq LBD^k + \delta$}

\State Update $Y^G_d$ using $\{(\bar{A}^{k,1},\bar{b}^{k,1}), \dots, (\bar{A}^{k,n_k},\bar{b}^{k,n_k}) \}$.

\Else

\State Set $Y^G_d \leftarrow Y^G_d \cup \{ (J^*_y(x^k), y^*(x^k) - J^*_y(x^k) x^k) \}$ if assumptions 

\Statex \hspace*{0.6in} of Theorem~\ref{thm:parametric_sens} hold, and $Y^G_d \leftarrow Y^G_d \cup \{ (0, y^*(x^k)) \}$ otherwise

\EndIf

\vspace*{0.03in}
\EndIf

\EndFor

\end{algorithmic}
}
\end{algorithm}

\subsection{Outline of bounding-focused generalized discretization methods}
\label{subsec:genl_algorithm_outlines}

Algorithm~\ref{alg:prototype_genl_disc} outlines a prototype bounding-focused generalized discretization method for~\eqref{eqn:sip} (cf.\ Algorithm~\ref{alg:prototype_disc}).
The key difference with the approach outlined in Djelassi~\cite{djelassi2020discretization} is on lines 9--12 of Algorithm~\ref{alg:prototype_genl_disc}.
If $x^k$ is not $\varepsilon_f$-feasible, Algorithm~\ref{alg:prototype_genl_disc} solves a max-min problem (heuristically) to identify new tuples that could be used to update the generalized discretization $Y^G_d$, whereas Djelassi~\cite{djelassi2020discretization} always looks to update $Y^G_d$ with the tuple $(J^*_y(x^k), y^*(x^k) - J^*_y(x^k) x^k)$ corresponding to a linear approximation of $y^*(x)$ at $x = x^k$. 

We consider four realizations of Algorithm~\ref{alg:prototype_genl_disc} that only vary on lines 9--12:
\texttt{G-OPT}, \texttt{G-GREEDY}, \texttt{G-2GREEDY}, and \texttt{G-HYBRID}.
Algorithms~\texttt{G-OPT}, \texttt{G-GREEDY}, and \texttt{G-HYBRID} are direct analogs of \texttt{OPT}, \texttt{GREEDY}, and \texttt{HYBRID} that rely on problems~\eqref{eqn:genl_method-2} and~\eqref{eqn:genl_method-1} instead of problems~\eqref{eqn:method-2} and~\eqref{eqn:method-1}.
Algorithm~\texttt{G-2GREEDY} first adds either $(J^*_y(x^k), y^*(x^k) - J^*_y(x^k) x^k)$ or $(0, y^*(x^k))$ to $Y^G_d$ (depending on whether the assumptions of Theorem~\ref{thm:parametric_sens} hold).
It then solves problem~\eqref{eqn:genl_method-1} to try and find another tuple to add to the generalized discretization.

\subsection{Convergence guarantees}
\label{subsec:genl-convergence-guarantees}

We begin by establishing convergence of the sequence of lower bounds $\{LBD^k\}_k$ generated by Algorithms \texttt{G-OPT}, \texttt{G-GREEDY}, \texttt{G-2GREEDY}, and \texttt{G-HYBRID} to $v^*$.
Like Theorem~\ref{thm:conv_lbd_disc}, this result also allows the max-min problems~\eqref{eqn:genl_method-2} and~\eqref{eqn:genl_method-1} to be solved using \textit{any} heuristic so long as $\delta > 0$ and the inner-minimization problem is solved to global optimality \textit{once} per iteration at the candidate max-min solution (see line 10 of Algorithm~\ref{alg:prototype_genl_disc}).
{Following the discussion in Section~\ref{subsec:outline-opt-disc-alg}, we note that the additional global solve on line~10 of Algorithm~\ref{alg:prototype_genl_disc} becomes redundant when the condition on line~11 is satisfied.
In practice, we skip this extra global solve and instead rely heuristically on the local solution of the inner-minimization problem to verify the sufficient bound increase condition on line~11.}

\begin{theorem}
\label{thm:conv_lbd_genl_disc}
Consider Algorithm~\ref{alg:prototype_genl_disc} with $\varepsilon_f = 0$ and $\delta > 0$.
Suppose the set $Y$ is convex and the generalized discretization $Y^G_d$ is updated using Algorithm \texttt{G-OPT}, \texttt{G-GREEDY}, \texttt{G-2GREEDY}, or \texttt{G-HYBRID}.
Then \mbox{$\underset{k \to \infty}{\lim} LBD^k = v^*$}.
\end{theorem}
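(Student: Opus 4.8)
The plan is to mirror the argument of Theorem~\ref{thm:conv_lbd_disc} and reduce the asymptotic behavior of Algorithm~\ref{alg:prototype_genl_disc} to that of the BF Algorithm~\ref{alg:bfdisc}. First I would observe that, since $f$ and $g$ are continuous, $X$ and $Y$ are compact, and~\eqref{eqn:sip} is feasible, the optimal value $v^*$ is finite with $v^* \geq \min_{x \in X} f(x) > -\infty$; moreover, every generalized discretization cut keeps problem~\eqref{eqn:higher-order-lbp} a relaxation of~\eqref{eqn:sip} (here the convexity of $Y$ is used only to ensure $\text{proj}_Y$ is well-defined and single-valued so that~\eqref{eqn:higher-order-lbp} is genuinely a relaxation, via the argument recalled in Section~\ref{sec:generalized_discretization}), so each $LBD^k \leq v^*$ and the sequence $\{LBD^k\}$ is nondecreasing.

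The crux is the same finite-execution-count argument: line~12 of Algorithm~\ref{alg:prototype_genl_disc} updates $P^G_d$ with the new cuts only when the (globally solved) candidate formulation~\eqref{eqn:higher-order-lbp} increases the lower bound by at least $\delta > 0$. Since $v^* - LBD^1 \leq v^* - \min_{x \in X} f(x) < \infty$ and $\{LBD^k\}$ is monotone and bounded above by $v^*$, line~12 can fire only finitely many times before $LBD^k$ would have to exceed $v^*$, a contradiction. Hence for all $k$ sufficiently large, line~14 is executed instead, so the algorithm appends $(J^*_y(x^k), y^*(x^k) - J^*_y(x^k) x^k)$ or $(0, y^*(x^k))$ to $P^G_d$. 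In either case the appended cut, evaluated at $x = x^k$, imposes $g(x^k, \text{proj}_Y(y^*(x^k))) = g(x^k, y^*(x^k)) = G(x^k) \leq 0$, i.e., exactly the Blankenship--Falk cut at $x^k$; the projection is inactive at $x^k$ because $y^*(x^k) \in Y$. Thus from that iteration onward, the discretization contains at least the BF points $\{y^*(x^j)\}$ (possibly embedded in richer affine cuts that only tighten~\eqref{eqn:higher-order-lbp} further relative to~\eqref{eqn:disc-lbp}), so the sequence $\{LBD^k\}$ dominates the BF lower bound sequence and is still bounded above by $v^*$.

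Finally I would invoke the convergence of the BF algorithm: by Lemma~2.2 of Mitsos~\cite{mitsos2011global} (cf.\ Theorem~3.1 of Harwood et al.\ \cite{harwood2021note}), the BF lower bounds converge to $v^*$ under our standing assumptions, and since the Algorithm~\ref{alg:prototype_genl_disc} lower bounds are sandwiched between the BF lower bounds and $v^*$ for all large $k$, a squeeze argument gives $\lim_{k \to \infty} LBD^k = v^*$. The main obstacle, and the point requiring care, is the reduction step: I must argue that the ``richer'' affine cuts added in line~14 (and any surviving cuts from earlier line~12 updates) never \emph{weaken} the relaxation relative to having only the zeroth-order BF cuts present, so that domination by the BF sequence is legitimate --- this follows because adding any cut to~\eqref{eqn:higher-order-lbp} only shrinks its feasible region, but it is worth stating explicitly. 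A secondary subtlety is confirming that when the assumptions of Theorem~\ref{thm:parametric_sens} fail and the $(0, y^*(x^k))$ branch is taken, we still recover exactly the BF cut, which is immediate.
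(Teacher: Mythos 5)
Your finite-execution-count argument for line~12 is correct and matches the paper's, but the reduction to the Blankenship--Falk algorithm breaks down at exactly the point you flag. When line~14 takes the branch $(\bar{A}^k,\bar{b}^k) = (J^*_y(x^k),\, y^*(x^k) - J^*_y(x^k)x^k)$, the cut added to~\eqref{eqn:higher-order-lbp} is $g\bigl(x,\text{proj}_Y(\bar{A}^k x + \bar{b}^k)\bigr) \leq 0$, and the zeroth-order BF cut $g(x, y^*(x^k)) \leq 0$ is \emph{not} added alongside it. These two constraints agree only at $x = x^k$; for $x \neq x^k$ the first-order cut evaluates $g$ at a different point of $Y$, and there is no containment between the two feasible regions --- the affine cut can be strictly \emph{weaker} than the BF cut away from $x^k$. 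So the claim that the discretization ``contains at least the BF points $\{y^*(x^j)\}$'' is false, the asserted domination of the BF lower-bound sequence does not hold, and the squeeze argument collapses. Your parenthetical defense --- that adding a cut only shrinks the feasible region --- is true but beside the point: a different cut is \emph{substituted} for the BF cut, not added on top of it.

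The paper instead runs the Mitsos-style convergence argument directly on the generalized cuts, using precisely the property that does survive: the cut from iteration $k$, \emph{evaluated at $x^k$ itself}, equals $g(x^k, y^*(x^k)) = G(x^k)$, because $\bar{A}^k x^k + \bar{b}^k = y^*(x^k) \in Y$ and the projection is inactive there. Concretely: assume without loss of generality that each $x^k$ is infeasible and pass to a convergent subsequence $x^k \to x^*$; every later iterate $x^l$ with $l > k$ satisfies the iteration-$k$ cut exactly; uniform continuity of $g$ and $\text{proj}_Y$ on the relevant compact sets (this is where convexity of $Y$ earns its keep, via continuity of the projection) together with the Cauchy property of the subsequence give $g\bigl(x^k,\text{proj}_Y(\bar{A}^k x^k + \bar{b}^k)\bigr) < \varepsilon$ for all large $k$, i.e.\ $0 < G(x^k) < \varepsilon$. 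Hence $G(x^k) \to 0$, continuity of $G$ gives $G(x^*) = 0$, and $LBD^k \to v^*$ follows. Your proof needs to be repaired along these lines rather than by appeal to BF domination.
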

\begin{proof}
The proof follows a similar outline as the proof of Theorem~\ref{thm:conv_lbd_disc} and Lemma 2.2 of Mitsos~\cite{mitsos2011global} (cf. Theorem 3.1 of Harwood et al.\ \cite{harwood2021note}).

{If the solution $x^l$ to the lower bounding problem~\eqref{eqn:higher-order-lbp} at iteration~$l \in \mathbb{N}$ is feasible to~\eqref{eqn:sip}, then we have $LBD^k = v^*$ for all $k \geq l$.
This holds because problem~\eqref{eqn:higher-order-lbp} is a relaxation of~\eqref{eqn:sip}, and the discretization methods \texttt{G-OPT}, \texttt{G-GREEDY}, \texttt{G-2GREEDY}, and \texttt{G-HYBRID} ensure that the lower bound $LBD^k$ is monotonically non-decreasing with respect to the iteration number $k$.
Hence, the stated result follows directly in this case.}

{Suppose that the lower bounding solution $x^k \in X$ is infeasible to~\eqref{eqn:sip} at each iteration $k \in \mathbb{N}$.}
Since $X$ is compact, we can assume (by moving to a subsequence) that $x^k \to x^* \in X$.
We show that $x^*$ is feasible to~\eqref{eqn:sip}, which implies $LBD^k \to v^*$.

By mirroring the arguments in Theorem~\ref{thm:conv_lbd_disc}, note that line 14 of Algorithm~\ref{alg:prototype_genl_disc} must be run infinitely often with $Y^G_d \leftarrow Y^G_d \cup \{ (J^*_y(x^k), y^*(x^k) - J^*_y(x^k) x^k) \}$ or $Y^G_d \leftarrow Y^G_d \cup \{ (0, y^*(x^k)) \}$.
Therefore, the asymptotic behavior of Algorithm~\ref{alg:prototype_genl_disc} is the same as the algorithm that adds at each iteration either $(J^*_y(x^k), y^*(x^k) - J^*_y(x^k) x^k)$ to $Y^G_d$ if Theorem~\ref{thm:parametric_sens} holds, or $(0, y^*(x^k))$ to $Y^G_d$ otherwise.
We show that $LBD^k \to v^*$ for the above algorithm.

Define the index sets $\mathcal{J}_k := \{j \in [k] : \text{Theorem~\ref{thm:parametric_sens} holds} \}$ and $\mathcal{L}_k := \{1,\dots,k\} \backslash \mathcal{J}_k$ corresponding to the above algorithm.
By construction, we have $\forall l, k$ such that $l > k$:
\[
g(x^l, \text{proj}_Y(\tilde{A}^k x^l + \tilde{b}^k)) \leq 0, \:\: \textbf{if } k \in \mathcal{J}_k, \quad \text{and} \quad g(x^l, y^*(x^k)) \leq 0, \:\: \textbf{if } k \in \mathcal{L}_k,
\]
where $\tilde{A}^k := J^*_y(x^k)$, $\tilde{b}^k := y^*(x^k) - J^*_y(x^k) x^k$ if $k \in \mathcal{J}_k$. 
Continuity of $g$, $\text{proj}_Y(\cdot)$ and compactness of $X$, $Y$ ensure uniform continuity, which implies that for any $\varepsilon > 0$, there exists $\kappa > 0$ such that for all $x \in X$ with $\norm{x - x^l} < \kappa$ and $\forall l, k$ with $l > k$:
\begin{align}
\label{eqn:int_step}
&g(x, \text{proj}_Y(\tilde{A}^k x + \tilde{b}^k)) < \varepsilon, \:\: \textbf{if } k \in \mathcal{J}_k, \quad \text{and} \quad g(x, y^*(x^k)) < \varepsilon, \:\: \textbf{if } k \in \mathcal{L}_k.
\end{align}
Since $x^k \to x^*$, we have $\norm{x^l - x^k} < \kappa$, $\forall l, k$ with $l > k \geq \bar{K}$.
Setting $x = x^k$ in \eqref{eqn:int_step} and noting $\tilde{A}^k x^k + \tilde{b}^k = y^*(x^k)$ if $k \in \mathcal{J}_k$ yields $0 < g(x^k, y^*(x^k)) < \varepsilon$, $\forall k \geq \bar{K}$. Therefore, $g(x^k, y^*(x^k)) = G(x^k) \to 0$ and continuity of $G$ implies $G(x^*) = 0$.
\end{proof}

Our next theorem establishes the rate of convergence of the lower bounds generated using Algorithm~\texttt{G-OPT} when the max-min problem~\eqref{eqn:genl_method-2} is solved to \textit{global} optimality at each iteration (this result is mainly of theoretical interest).

We require the following lemma, which is sharp for affine functions.

\begin{lemma}
\label{lem:pwlest}
Suppose $Z \subset \R^N$ is compact and convex and $F: Z \to \mathbb{R}^M$ is continuously differentiable with a Lipschitz continuous gradient.
Let $L_{\nabla F}$ denote the Lipschitz constant of $\nabla F$ on $Z$.
Then $\forall \varepsilon > 0$, there exist $J = \bigg\lceil\left(1 + \textup{diam}(Z)\sqrt{\frac{L_{\nabla F}}{2\varepsilon}}\right)^N\bigg\rceil$ affine functions $\{ \tr{(\alpha^j)} z + \beta^j \}_{j=1}^{J}$ with \mbox{$\uset{z \in Z}{\sup} \: \uset{j \in [J]}{\min} \: \big\lVert F(z) - \bigl(\tr{(\alpha^j)} z + \beta^j \bigr) \big\rVert \leq \varepsilon$}.
\end{lemma}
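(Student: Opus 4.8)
The plan is to cover $Z$ by a grid of small cubes, use a first-order Taylor expansion of $F$ at the center of each cube as the local affine approximant, and bound the Taylor remainder using the Lipschitz constant of $\nabla F$. First I would fix $\varepsilon > 0$ and set the grid size so that each cell has diameter at most $2\rho$ for a radius $\rho$ to be chosen; since $Z \subset \R^N$ is compact and convex with diameter $\mathrm{diam}(Z)$, it is contained in an axis-aligned cube of side length $\mathrm{diam}(Z)$, which can be partitioned into $m^N$ subcubes of side $\mathrm{diam}(Z)/m$, each of diameter $\mathrm{diam}(Z)\sqrt{N}/m$ — but since I want to phrase the covering in terms of Euclidean balls of radius $\rho$ centered at the cube centers, I will instead directly count balls: $Z$ can be covered by at most $\bigl(1 + \tfrac{\mathrm{diam}(Z)}{2\rho}\bigr)^N$ balls of radius $\rho$ (the same covering-number estimate used in the proof of Theorem~\ref{thm:convrate_opt}). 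For each such ball with center $z^j$, define the affine function $\tr{(\alpha^j)} z + \beta^j := F(z^j) + \nabla F(z^j)(z - z^j)$ (componentwise, so $\alpha^j$ collects the rows $\nabla F_i(z^j)$ and $\beta^j = F(z^j) - \nabla F(z^j) z^j$).

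The key estimate is the standard integral-form Taylor bound: for any $z$ in the $j$-th ball,
\[
\bigl\lVert F(z) - F(z^j) - \nabla F(z^j)(z - z^j) \bigr\rVert = \Bigl\lVert \int_0^1 \bigl( \nabla F(z^j + t(z - z^j)) - \nabla F(z^j) \bigr)(z - z^j)\, dt \Bigr\rVert \le \tfrac{1}{2} L_{\nabla F} \norm{z - z^j}^2 \le \tfrac{1}{2} L_{\nabla F} \rho^2,
\]
where convexity of $Z$ guarantees the segment $z^j + t(z-z^j)$ stays in $Z$ so that the Lipschitz bound on $\nabla F$ applies. Setting $\tfrac{1}{2} L_{\nabla F} \rho^2 = \varepsilon$, i.e. $\rho = \sqrt{2\varepsilon / L_{\nabla F}}$, makes every point of $Z$ within $\varepsilon$ (in fact strictly less, after shrinking $\rho$ infinitesimally, which is harmless) of its nearest affine piece; plugging this $\rho$ into the covering bound yields $J = \bigl(1 + \tfrac{\mathrm{diam}(Z)}{2}\sqrt{\tfrac{L_{\nabla F}}{2\varepsilon}}\bigr)^N$ affine functions, matching the claimed count. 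To get strict inequality I would, e.g., run the argument with $\varepsilon' = \varepsilon/2$ in the remainder bound and note $J(\varepsilon') $ is still dominated — actually cleaner is to observe the supremum over the compact set $Z$ of the (continuous) pointwise-min error is attained and equals at most $\varepsilon/2 < \varepsilon$ when $\rho$ is chosen for $\varepsilon$; I will phrase it so the stated $J$ suffices for the strict bound.

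I do not anticipate a serious obstacle here — the result is essentially the classical piecewise-affine approximation bound for $C^{1,1}$ functions. The only points requiring care are: (i) making sure the covering-number constant comes out exactly as $1 + (\tfrac{\mathrm{diam}(Z)}{2\rho})^N$ rather than something with an extra $\sqrt{N}$ factor, which is handled by covering with balls (centered at a $\rho$-net of $Z$) rather than cubes, so no dimensional $\sqrt{N}$ leaks in; (ii) using convexity of $Z$ to justify that the Taylor segment lies in $Z$, which is exactly where the convexity hypothesis is needed; and (iii) converting the non-strict remainder bound $\le \varepsilon$ into the strict bound $< \varepsilon$ demanded in the statement, which I would resolve by applying the estimate with a slightly smaller target value and noting the resulting $J$ is no larger than the one claimed, or simply by the compactness/attained-supremum remark above. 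The whole argument is a few lines once these conventions are fixed.
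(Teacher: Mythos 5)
Your proposal is correct and follows essentially the same route as the paper: the integral form of Taylor's theorem gives the remainder bound $\tfrac{L_{\nabla F}}{2}\norm{z-z^j}^2$, and covering $Z$ with balls of radius $\sqrt{2\varepsilon/L_{\nabla F}}$ centered at $z^j$ with $\alpha^j = \nabla F(z^j)$, $\beta^j = F(z^j) - \tr{\nabla F(z^j)}z^j$ yields the stated count. Your worry about strictness is handled even more simply in the paper by using \emph{open} balls, so that $\norm{z - z^j} < \rho$ gives the strict inequality directly without any shrinking of $\rho$ or compactness argument.
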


\begin{theorem}
\label{thm:convrate_gopt}
Consider Algorithm~\texttt{G-OPT} with $\varepsilon_f > 0$ and $\delta = 0$.
Suppose $X$ and $Y$ are convex sets and $\{g(x,\cdot)\}_{x \in X}$ is uniformly Lipschitz continuous on $Y$ with Lipschitz constant $L_{g,y} > 0$.
Additionally, suppose $y^*$ is continuously {differentiable} on $X$ with a Lipschitz continuous gradient. Let $L_{\nabla y}$ denote the {Lipschitz} {constant} of $\nabla y^*$ on $X$.
If the max-min problem~\eqref{eqn:genl_method-2} is solved to \textit{global} {optimality} at each iteration, then Algorithm~\texttt{G-OPT} terminates with an $\varepsilon_f$-feasible point in at most $\Big\lceil\left(1 + \textup{diam}(X)\sqrt{\frac{L_{\nabla y} L_{g,y}}{2\varepsilon_f}}\right)^{d_x}\Big\rceil$ iterations.
\end{theorem}

Chapter~3 of Fiacco~\cite{fiacco1983} presents conditions when the assumption on $y^*$ holds.
The convexity assumption on the set $X$ may be relaxed by considering any convex superset of $X$.
Note that the bound on the number of iterations in Theorem~\ref{thm:convrate_gopt} scales like $\varepsilon_f^{-0.5d_x}$ compared to the $\varepsilon_f^{-d_x}$ scaling in Proposition~\ref{prop:convrate_bf}.
The rate at which $LBD^k \to v^*$ for \texttt{G-OPT} can be derived similar to Theorem~\ref{thm:calmness}.
Since we may set $\bar{A}^k = 0$, $\forall k$, Algorithm~\texttt{G-OPT} generates tighter lower bounds than Algorithm~\texttt{OPT} if we solve the max-min problem~\eqref{eqn:genl_method-2} to global optimality.

\subsection{Solving the max-min problems}
\label{subsec:gen-solv-max-min}

In addition to the challenges outlined in Section~\ref{sec:accelerated_discretization}, solving problems~\eqref{eqn:genl_method-2} and~\eqref{eqn:genl_method-1} globally may also be challenging due to the nonsmooth projection operator.
Therefore, we design effective heuristic methods for solving these problems and empirically show in Section~\ref{sec:numerical_results} that they often yield good generalized discretizations with fast convergence of lower bounds.

\paragraph*{Properties of the optimal solution mapping $y^*$}
Before we outline our heuristic approach for solving problems~\eqref{eqn:genl_method-2} and~\eqref{eqn:genl_method-1}, we provide empirical motivation for (bounding-focused) generalized discretization methods.
Figure~\ref{fig:optimal_soln} plots the optimal solution mapping $y^*$ for Examples~\ref{exm:dp},~\ref{exm:seidel},~\ref{exm:tsou}, and~\ref{exm:wath}.
\begin{figure}[t]
\centering
\begin{subfigure}{0.245\textwidth}
\includegraphics[width=0.98\columnwidth]{./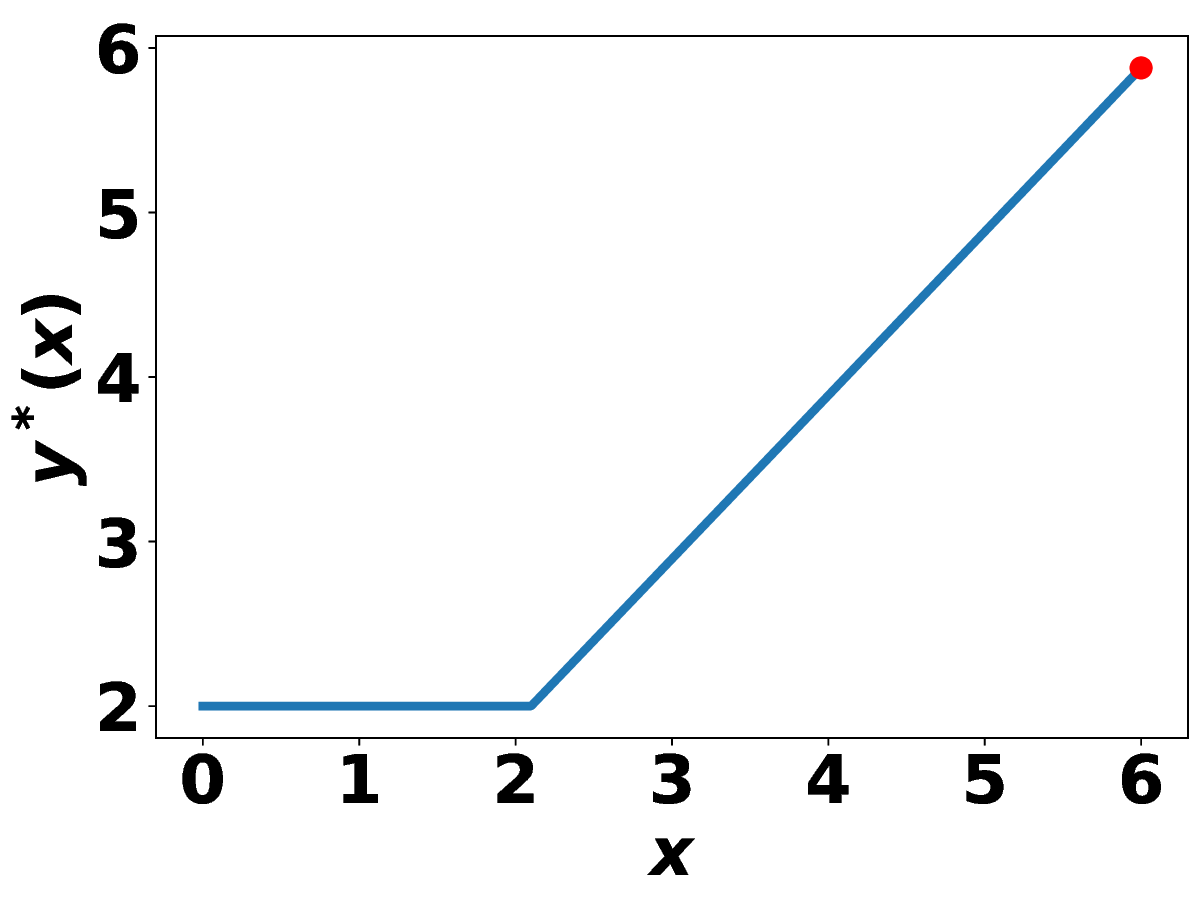}
    \caption{Example~\ref{exm:dp}}
\end{subfigure}%
\begin{subfigure}{0.245\textwidth}
\includegraphics[width=0.98\columnwidth]{./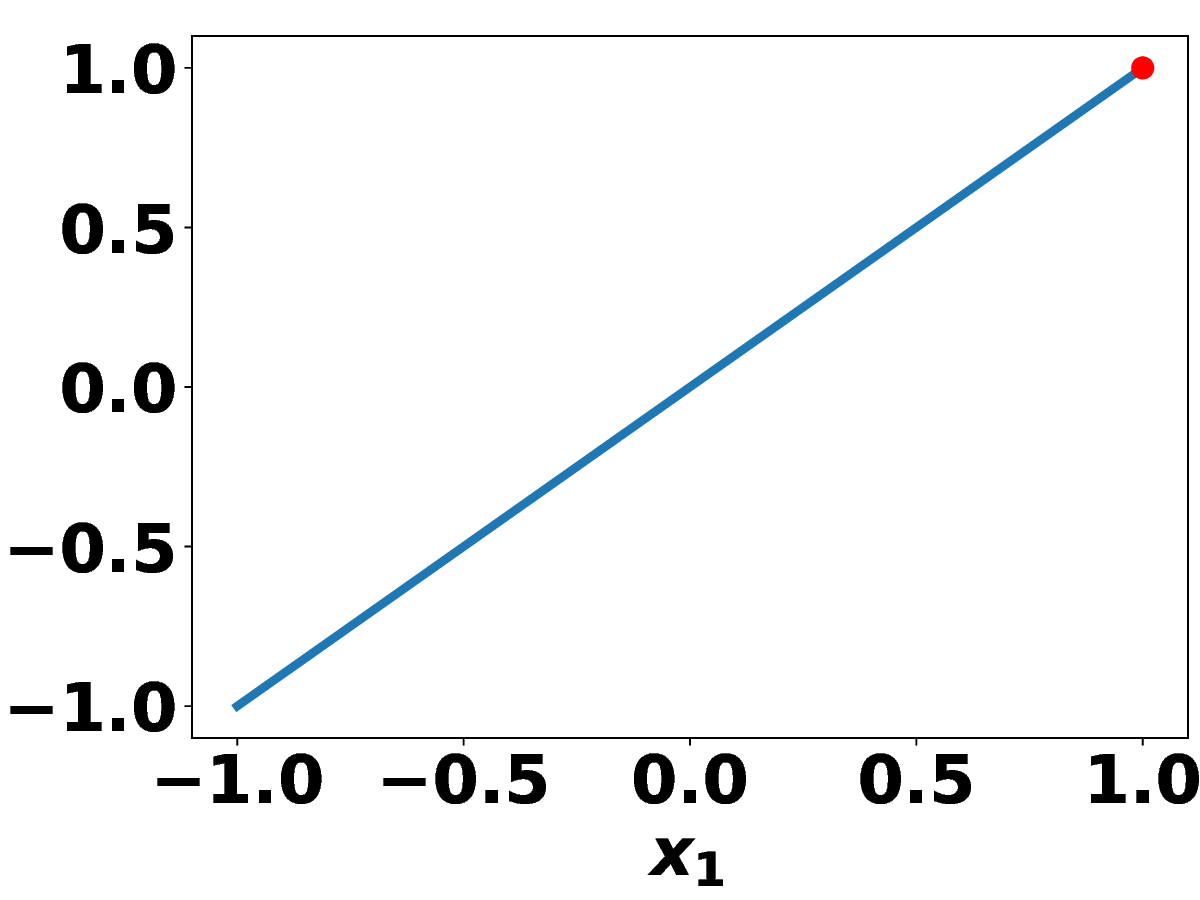}
\caption{Example~\ref{exm:seidel}}
\end{subfigure}%
\begin{subfigure}{0.245\textwidth}
\includegraphics[width=0.98\columnwidth]{./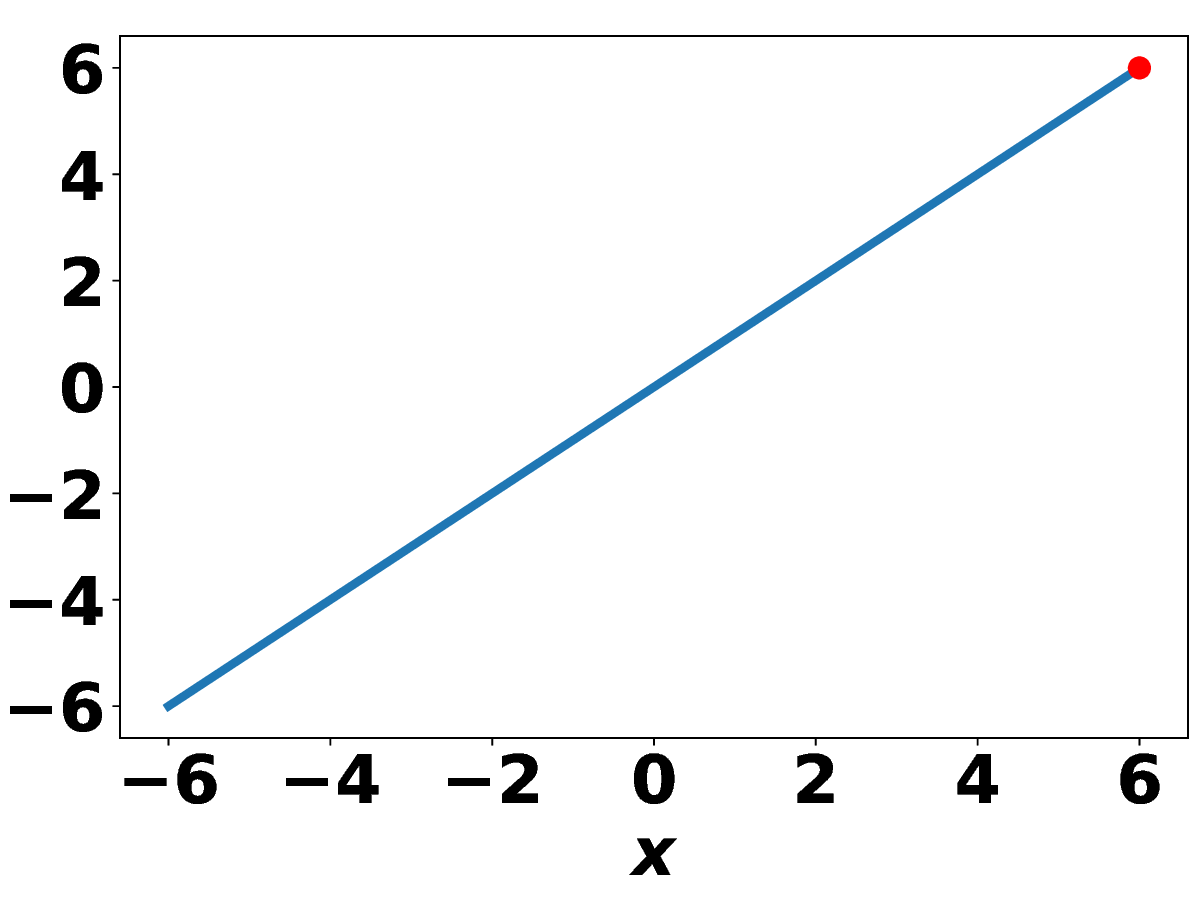}
    \caption{Example~\ref{exm:tsou}}
\end{subfigure}%
\begin{subfigure}{0.245\textwidth}
\includegraphics[width=0.98\columnwidth]{./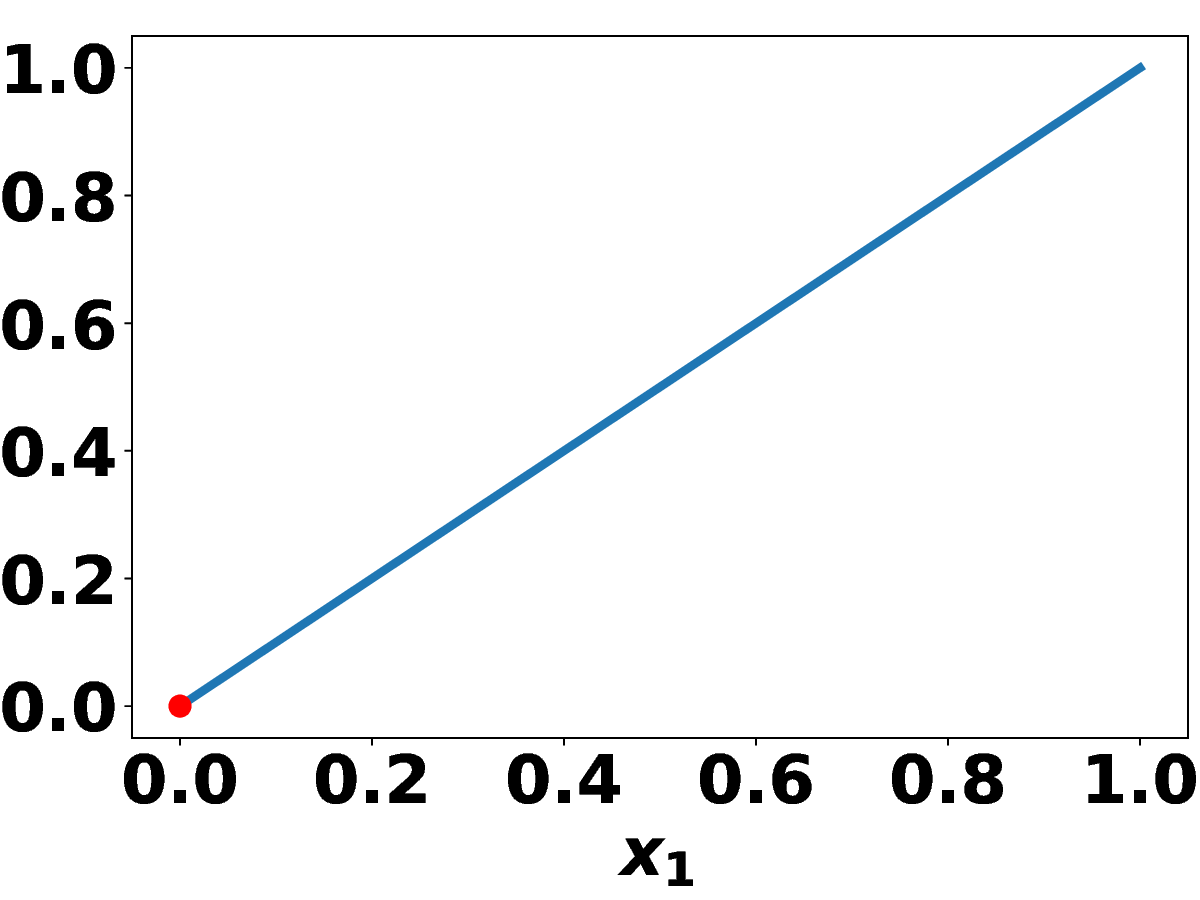}
    \caption{Example~\ref{exm:wath}}
\end{subfigure}
\caption{Optimal solution mapping~$y^*$ (note that it only depends on $x_1$ for Examples~\ref{exm:seidel} and~\ref{exm:wath}). The red dot indicates $y^*(x^1)$ at $x^1 \in \argmin_{x \in X} f(x)$.}
\label{fig:optimal_soln}
\end{figure}
Interestingly, this mapping is well-behaved for all four examples (it is piecewise-linear for Example~\ref{exm:dp} and linear for Examples~\ref{exm:seidel} to~\ref{exm:wath}).
Moreover, using $Y^G_d = \{(J^*_y(x^1), y^*(x^1) - J^*_y(x^1) x^1)\}$ in~\eqref{eqn:higher-order-lbp} at the point $(x^1, y^*(x^1))$ highlighted in these plots yields a lower bound equal to $v^*$ for all four examples.
However, this favorable situation may not always be the case, and the mapping $y^*$ may be nonconvex, nonsmooth, and even discontinuous in general.
For example, any~\eqref{eqn:sip} with $X = Y = [0,1]$ and $g(x,y) = (x - 0.5)y$ results in the optimal solution mapping $y^*(x) = \mathds{1}(x - 0.5)$, where $\mathds{1}(z) = 1$ if $z \geq 0$ and zero otherwise, which is discontinuous at $x = 0.5$.
Example~\ref{exm:hijazi} from Section~\ref{subsec:convergence_guarantees} provides another instance where $y^*$ is discontinuous.

\vspace*{0.1in}
\noindent Example~\ref{exm:hijazi}: 
Pick any $\bar{y} \in Y$, and
consider the optimal solution mapping $y^*$:
\[
y^*(x) := 
\begin{cases}
\frac{x}{\norm{x}} \sqrt{d_x - 1}, & \text{if } x \neq 0\\
\bar{y}, & \text{if } x = 0
\end{cases}.
\]
This mapping is discontinuous at $x = 0$ irrespective of the choice of {$\bar{y} \in Y$}.
However, setting $Y^G_d = \{(I,0)\}$ in~\eqref{eqn:higher-order-lbp}, where $I$ is the identity matrix, yields a lower bound equal to $v^*$ since $\text{proj}_{Y}(x) = y^*(x)$, $\forall x \in X$.
Therefore, a generalized discretization with $\abs{Y^G_d} = 1$ is sufficient for convergence, which is in stark contrast with the discretization methods in Section~\ref{subsec:convergence_guarantees} that require exponentially many iterations in the dimension $d_x$ to converge.

\paragraph*{A heuristic solution approach}

We only consider the setting where $Y = [y^L, y^U]$ for some $y^L, y^U \in \R^{d_y}$. 
The function $\text{proj}_Y(\cdot)$ can then be reformulated as the MILP-representable function $\text{mid}(y^L, \cdot, y^U)$ using the ``lambda formulation'' {(see~\citep[Formulation~(2.5)]{vielma2015mixed})}, and~\eqref{eqn:higher-order-lbp} can be written as a mixed-integer nonlinear program~(MINLP).
Section~3.4 of Djelassi~\cite{djelassi2020discretization} also considers more general settings for~$Y$.

Similar to the functions $\phi_k$ and $\psi_k$ in Section~\ref{sec:accelerated_discretization}, the functions $\phi_k^G$ and $\psi_k^G$ are potentially nonsmooth and discontinuous.
Therefore, we propose to solve the max-min problems in Algorithm \ref{alg:prototype_genl_disc} approximately by using gradients (whenever they exist) of a smooth approximation of $\phi_k^G$ and $\psi_k^G$ within a bundle solver for nonsmooth, nonconvex optimization \cite{makela2003multiobjective}.

We begin by replacing the function $\text{mid}\{y^L, \cdot, y^U\}$ in problems~\eqref{eqn:genl_method-2} and~\eqref{eqn:genl_method-1} with the approximation $-t^{-1}\log((\exp(ty^L)+\exp(t \: \cdot))^{-1}+\exp(-ty^U))$, where the smoothing parameter $t$ is set to $100$.
We estimate values of $\phi_k^G$ and $\psi_k^G$ by solving these smooth approximations of the inner-minimization problems to \emph{local} optimality. 
We then attempt to apply Theorem~\ref{thm:parametric_sens} to compute gradients of the smooth approximation of the local minimum value function of $\phi_k^G$ and $\psi_k^G$. 
Whenever the assumptions of Theorem~\ref{thm:parametric_sens} fail to hold, we estimate a generalized gradient of $\phi_k^G$ and $\psi_k^G$ using similar heuristics as in Section~\ref{subsec:solving_max_min} (however, based on numerical experiments, we exclude all weakly active constraints when SC does not hold).
If the bundle solver terminates after a single iteration, we restart its solution using a random initialization for $(A^k, b^k)$.

\section{Generalizations}
\label{sec:generalizations}

\paragraph*{Multiple semi-infinite constraints}
Suppose~\eqref{eqn:sip} includes $\abs{\mathcal{I}}$ semi-infinite constraints $g_i(x,y) \leq 0$, $\forall y \in Y^i$, {$i \in \mathcal{I}$}.
The formulation below extends the max-min problem~\eqref{eqn:max_min_iter1} for constructing a bounding-focused discretization at the first iteration of Algorithm~\ref{alg:prototype_disc}.
\begin{align*}
(\bar{y}^1, \dots, \bar{y}^{\abs{\mathcal{I}}}) \in \argmax_{(y^1, \dots,y^{\abs{\mathcal{I}}}) \in Y^1 \times \dots \times Y^{\abs{\mathcal{I}}}} \:\: & \min_{x \in X} \:\: f(x) \\
& \:\:\:\text{s.t.} \:\: g_i(x,y^i) \leq 0, \quad \forall i \in \mathcal{I}. \nonumber
\end{align*}
Extensions of the max-min problems~\eqref{eqn:method-2} and~\eqref{eqn:method-1} and the bounding-focused generalized discretization methods in Section~\ref{sec:generalized_discretization} readily follow.

\paragraph*{Generalized discretizations based on nonlinear approximations of $y^*$}

Instead of restricting ourselves to bounding-focused linear approximations of $y^*(x)$ as in Section~\ref{sec:generalized_discretization}, we can construct bounding-focused \textit{nonlinear} approximations of $y^*$ for potentially faster convergence.
Let $\gamma : X \times \Theta \to \R^{d_y}$ be any family of functions.
We propose the following extension of~\eqref{eqn:higher-order-lbp}:
\begin{align}
\label{eqn:genl_higher-order-lbp}
\min_{x \in X} \:\: & f(x) \\
\text{s.t.} \:\: & g\bigl(x,\text{proj}_Y(\gamma(x,\theta))\bigr) \leq 0, \quad \forall \theta \in \Theta_d. \nonumber
\end{align}
Clearly,~\eqref{eqn:higher-order-lbp} is a special case of problem~\eqref{eqn:genl_higher-order-lbp} where $\{\gamma(\cdot,\theta)\}_{\theta}$ is the set of all parametric affine (in $x$) functions.
Extensions of the max-min problems~\eqref{eqn:genl_method-2} and~\eqref{eqn:genl_method-1} to determine an optimal sequence of parameters $\{\theta^k\}_k$ readily follow.

\paragraph*{Mixed-integer SIPs}

The presence of integer variables in~\eqref{eqn:sip} precludes the use of the sensitivity theory in Appendix~\ref{sec:sensitivity_theory} for solving the max-min problems~\eqref{eqn:method-2},~\eqref{eqn:method-1},~\eqref{eqn:genl_method-2}, and~\eqref{eqn:genl_method-1}.
Because we can heuristically  solve these max-min problems \textit{without} sacrificing convergence of our (generalized) discretization methods, one heuristic is to use sensitivities of the value functions of the inner-minimization problems with the integer variables fixed to an optimal solution.
An alternative is to use smoothing-based approaches~\cite{ermoliev1995minimization} for approximating sensitivity information.

\section{Numerical results}
\label{sec:numerical_results}

We compare Algorithms \texttt{GREEDY}, \texttt{2GREEDY}, \texttt{HYBRID}, and \texttt{OPT} in Section~\ref{subsec:outline-opt-disc-alg} and Algorithms~\texttt{G-GREEDY}, \texttt{G-2GREEDY}, \texttt{G-HYBRID}, and \texttt{G-OPT}
in Section~\ref{subsec:genl_algorithm_outlines} with the BF
algorithm on \textit{standard} nonconvex SIP instances from the literature~\cite{watson1983numerical,seidel2020adaptive,tsoukalas2011feasible,mitsos2009test}. 
{These include a larger, parameterized version of Problem~5 from Watson~\cite{watson1983numerical}, which we label ``Watson 5\_10'' (cf.\ \citep{tanaka1988globally})}.
We only consider instances with scalar semi-infinite constraints and sets $Y$ of the form $Y = [y^L, y^U]$.
We omit instances with trigonometric functions as they are not supported by BARON~\cite{baron}. 
Although these instances are small-scale nonconvex SIPs ($d_x \leq 10$ and $d_y \leq 2$), solving them to \textit{global} optimality is not trivial.

We also test our algorithms on larger-scale {convex} SIPs ($d_x \in [21, 105]$ and {$d_y \in [5, 13]$}) from Cerulli et al.\ \cite{cerulli2022convergent}, {where~\eqref{eqn:llp} is a (potentially nonconvex) quadratic program. Our computational times are lower than those reported in Cerulli et al.\ \cite{cerulli2022convergent}, which we attribute to three differences in our implementation: $(i)$ we use BARON instead of Gurobi, $(ii)$ we use a different termination criterion (detailed in the next section), and $(iii)$ we enforce the symmetry constraint in the formulation symbolically using JuMP.
}

Because our focus is on designing discretization methods with tight lower bounds, we only compare our approaches with the BF algorithm (the state-of-the-art discretization method for nonconvex SIPs) and do not consider algorithms for constructing feasible points (see, e.g., \cite{mitsos2011global,tsoukalas2011feasible,djelassi2017hybrid}).

\subsection{Implementational details}
Our codes are compiled using {Julia 1.7.3, JuMP 1.24.0 \cite{JuMP2017}, BARON 24.1.3~\cite{baron} 
or  Gurobi 12.0.1 as the global solver, Knitro 14.2.0 as the local NLP solver, Gurobi 12.0.1} as the LP solver, and the bundle solver MPBNGC 2.0 \cite{makela2003multiobjective} for the max-min problems. 
Our codes will be made available at \url{https://github.com/Process-Optimization-and-Control/Bounding-Focused-SIP-Discretizations}.
{All codes were run on a computer with an AMD Ryzen 7 PRO 8840U CPU (3.30 GHz, 8 cores) and 32 GB of RAM.}

\paragraph*{General algorithmic parameters}
Since~\eqref{eqn:disc-lbp} and~\eqref{eqn:higher-order-lbp} may not yield a feasible point finitely, we terminate our algorithms when the lower bound converges to within an absolute or relative tolerance of $10^{-3}$ of $v^*$.
{When available, $v^*$ is specified as the analytical solution or the best known feasible solution reported in the literature~\cite{djelassi2020discretization,cerulli2022convergent,seidel2020adaptive,tsoukalas2011feasible}; otherwise, it is computed offline by running the BF algorithm with $\varepsilon_f = 10^{-8}$ until an approximate feasible point is found.
This termination criterion is chosen to directly assess the impact of our discretization methods on accelerating the convergence of the lower bound.}

We use $\varepsilon_f = \delta = 10^{-8}$ in Algorithms~\ref{alg:bfdisc},~\ref{alg:prototype_disc}, and~\ref{alg:prototype_genl_disc}.
The feasibility and optimality tolerances in Gurobi are set to $10^{-8}$, with a {maximum time limit of five minutes}.
All of BARON's parameters are set to default, {except for $\texttt{MaxTime} = 300$ seconds}. 
Knitro is used with the following parameters: $\texttt{algorithm}=0$,  $\texttt{ftol}=10^{-8}$, $\texttt{maxtime}=10$, and $\texttt{feastol}=10^{-8}$. {We enable Knitro's multistart heuristic with a maximum of five starts. Due to a limitation in Knitro's Julia interface, the multistart procedure is executed serially rather than in parallel, which inflates the reported solve times for the max-min problems.}

\paragraph*{Algorithm-specific parameters}

The starting point for the max-min problem solved by the \texttt{2GREEDY} method is specified as $y^L+\zeta(y^U-y^L)$, where $\zeta$ is a diagonal matrix with random diagonal elements $\zeta_{ii} \sim U([0,1])$.
Similarly, for the \texttt{G-2GREEDY} method, the elements of $A^k$ are initialized randomly from $U([0,1])$ and $b^k$ is set to {$y^L+\zeta(y^U-y^L) - A^kx^k$} with $\zeta_{ii} \sim U([0,1])$. 
We set the parameter $K = 3$ for Algorithms \texttt{HYBRID} and~\texttt{G-HYBRID}.

\subsection{Bounding-focused discretization methods}
\label{sec: disc-methods}

\begin{table}[t]
\centering
\begin{tabular}{rccccccc}
\hline
\textbf{Instance}     & $\mathbf{d_x}$ & $\mathbf{d_y}$ & BF            & \texttt{GREEDY}        & \texttt{2GREEDY}       & \texttt{HYBRID}    & \texttt{OPT}   \\
            &    &    &  \multicolumn{5}{c}{\textbf{number of iterations for convergence}} \\ \hline
Watson 2                     & 2   & 1  & \textbf{2}  & \textbf{2}  & \textbf{2}  & \textbf{2}  & \textbf{2} \\
Watson 5                     & 3   & 1  & 5           & 4           & \textbf{2}  & 3           & 3          \\
Watson 5\_10           & 10  & 1  & 3           & 4           & \textbf{2}  & 3           & 3          \\
Watson 6                     & 2   & 1  & 3           & \textbf{2}  & \textbf{2}  & \textbf{2}  & \textbf{2} \\
Watson 7                     & 3   & 2  & \textbf{2}  & \textbf{2}  & \textbf{2}  & \textbf{2}  & \textbf{2} \\
Watson 8                     & 6   & 2  & 15          & 27          & \textbf{6}  & 11          & 21         \\
Watson 9                     & 6   & 2  & 9           & 8           & \textbf{5}  & 10          & 12         \\
Watson h                     & 2   & 1  & 18          & 21          & \textbf{13} & 23          & 25         \\
Watson n                     & 2   & 1  & 3       & \textbf{2}  & \textbf{2}  & \textbf{2}  & \textbf{2} \\
Seidel \& Küfer 2.1          & 2   & 1  & 8           & \textbf{3}  & \textbf{3}  & \textbf{3}  & \textbf{3} \\
Tsoukalas \& Rustem 2.1      & 1   & 1  & 8           & \textbf{4}  & 5           & 5           & 8          \\
Mitsos 4\_3                  & 3   & 1  & 5           & {4}  & \textbf{3}  & {4}  & {4} \\
Mitsos 4\_6                  & 6   & 1  & 7           & 7           & \textbf{6}  & 8           & 7          \\
Mitsos DP                    & 1   & 1  & 28          & \textbf{2}  & \textbf{2}  & \textbf{2}  & \textbf{2} \\
Cerulli et al.\ PSD 1        & 21  & 5  & \textbf{2}  & \textbf{2}  & \textbf{2}  & \textbf{2}  & \textbf{2} \\
Cerulli et al.\ PSD 2        & 21  & 5  & \textbf{2}  & \textbf{2}  & \textbf{2}  & \textbf{2}  & \textbf{2} \\
Cerulli et al.\ PSD 3        & 21  & 5  & \textbf{2}  & \textbf{2}  & \textbf{2}  & \textbf{2}  & \textbf{2} \\
Cerulli et al.\ PSD 4        & 21  & 5  & \textbf{2}  & \textbf{2}  & \textbf{2}  & \textbf{2}  & \textbf{2} \\
Cerulli et al.\ PSD 5        & 66  & 10 & 5           & \textbf{2}  & 5           & 5           & 5          \\
Cerulli et al.\ PSD 6        & 66  & 10 & 6           & \textbf{2}  & 6           & \textbf{2}  & \textbf{2} \\
Cerulli et al.\ PSD 7        & 105 & 13 & 7           & \textbf{3}  & 7           & 4           & 4          \\
Cerulli et al.\ PSD 8        & 105 & 13 & 5           & \textbf{2}  & 5           & \textbf{2}  & \textbf{2} \\
\hline
\end{tabular}
\caption{
{
Comparison of BF, \texttt{GREEDY}, \texttt{2GREEDY}, \texttt{HYBRID}, and \texttt{OPT} algorithms.
Bold entries correspond to the minimum number of iterations for each instance.
} 
}
\label{tab:resul-simple-probs}
\end{table}

\begin{sidewaystable}[]
\footnotesize
\begin{tabular}{rccccccc}
\toprule
\textbf{Instance} & $\mathbf{d_x}$ & $\mathbf{d_y}$ & BF & \texttt{GREEDY} & \texttt{2GREEDY} & \texttt{HYBRID} & \texttt{OPT} \\[0.02in]
\multicolumn{3}{c}{} & \multicolumn{5}{c}{\textbf{Total time to solve LBP/LLP/max-min (seconds)}} \\
\midrule
Watson 2 & 2 & 1 & 0.07 / 0.06 / 0.00 & 0.06 / 0.05 / 0.27 & 0.06 / 0.06 / 0.01 & 0.07 / 0.04 / 0.29 & 0.09 / 0.04 / 0.25 \\
Watson 5 & 3 & 1 & 0.16 / 0.43 / 0.00 & 0.08 / 0.28 / 0.36 & 0.05 / 0.18 / 0.07 & 0.05 / 0.07 / 0.36 & 0.04 / 0.06 / 0.35 \\
Watson 5\_10 & 10 & 1 & 0.12 / 0.08 / 0.00 & 0.17 / 0.13 / 0.58 & 0.04 / 0.07 / 0.07 & 0.08 / 0.08 / 0.64 & 0.07 / 0.08 / 0.58 \\
Watson 6 & 2 & 1 & 0.15 / 0.05 / 0.00 & 0.09 / 0.03 / 0.02 & 0.08 / 0.03 / 0.01 & 0.07 / 0.04 / 0.02 & 0.06 / 0.02 / 0.03 \\
Watson 7 & 3 & 2 & 0.06 / 0.03 / 0.00 & 0.08 / 0.02 / 0.01 & 0.04 / 0.02 / 0.01 & 0.05 / 0.02 / 0.00 & 0.05 / 0.02 / 0.00 \\
Watson 8 & 6 & 2 & 0.00 / 1.29 / 0.00 & 0.01 / 2.73 / 4.31 & 0.00 / 0.37 / 0.68 & 0.00 / 0.93 / 1.25 & 0.00 / 1.89 / 5.13 \\
Watson 9 & 6 & 2 & 0.00 / 0.29 / 0.00 & 0.00 / 0.31 / 0.37 & 0.00 / 0.22 / 0.29 & 0.00 / 0.46 / 1.02 & 0.00 / 0.56 / 2.26 \\
Watson h & 2 & 1 & 1.83 / 0.65 / 0.00 & 1.67 / 0.84 / 2.52 & 1.13 / 0.46 / 1.43 & 1.82 / 0.82 / 1.58 & 2.11 / 0.99 / 3.13 \\
Watson n & 2 & 1 & 0.05 / 0.04 / 0.00 & 0.03 / 0.03 / 0.01 & 0.05 / 0.06 / 0.02 & 0.03 / 0.03 / 0.01 & 0.02 / 0.02 / 0.02 \\
Seidel \& Küfer 2.1 & 2 & 1 & 0.24 / 0.25 / 0.00 & 0.10 / 0.04 / 0.19 & 0.09 / 0.09 / 0.07 & 0.05 / 0.05 / 0.22 & 0.10 / 0.04 / 0.23 \\
Tsoukalas \& Rustem 2.1 & 1 & 1 & 0.47 / 0.29 / 0.00 & 0.21 / 0.13 / 0.37 & 0.31 / 0.17 / 0.74 & 0.23 / 0.19 / 0.57 & 0.46 / 0.31 / 1.66 \\
Mitsos 4\_3 & 3 & 1 & 0.00 / 0.21 / 0.00 & 0.00 / 0.14 / 0.08 & 0.00 / 0.11 / 0.02 & 0.00 / 0.12 / 0.20 & 0.00 / 0.11 / 0.19 \\
Mitsos 4\_6 & 6 & 1 & 0.00 / 0.74 / 0.00 & 0.00 / 0.60 / 0.20 & 0.00 / 0.49 / 0.39 & 0.00 / 0.78 / 0.68 & 0.00 / 0.57 / 0.73 \\
Mitsos DP & 1 & 1 & 1.51 / 1.30 / 0.00 & 0.06 / 0.08 / 0.03 & 0.04 / 0.08 / 0.03 & 0.06 / 0.06 / 0.03 & 0.04 / 0.06 / 0.02 \\
Cerulli et al.\ PSD 1 & 21 & 5 & 0.29 / 0.09 / 0.00 & 0.21 / 0.07 / 2.11 & 0.28 / 0.09 / 2.12 & 0.25 / 0.06 / 2.23 & 0.15 / 0.04 / 2.24 \\
Cerulli et al.\ PSD 2 & 21 & 5 & 0.24 / 0.09 / 0.00 & 0.18 / 0.08 / 2.11 & 0.26 / 0.09 / 1.81 & 0.24 / 0.06 / 2.24 & 0.15 / 0.08 / 2.26 \\
Cerulli et al.\ PSD 3 & 21 & 5 & 0.25 / 0.09 / 0.00 & 0.27 / 0.09 / 3.35 & 0.17 / 0.06 / 1.88 & 0.26 / 0.03 / 3.57 & 0.19 / 0.04 / 3.58 \\
Cerulli et al.\ PSD 4 & 21 & 5 & 0.28 / 0.04 / 0.00 & 0.21 / 0.04 / 2.00 & 0.28 / 0.04 / 2.27 & 0.26 / 0.03 / 2.12 & 0.15 / 0.03 / 2.11 \\
Cerulli et al.\ PSD 5 & 66 & 10 & 1.53 / 0.13 / 0.00 & 0.61 / 0.04 / 31.50 & 1.69 / 0.11 / 805.54 & 1.73 / 0.11 / 570.17 & 1.72 / 0.09 / 571.48 \\
Cerulli et al.\ PSD 6 & 66 & 10 & 1.75 / 0.14 / 0.00 & 0.68 / 0.06 / 72.45 & 2.03 / 0.13 / 559.22 & 0.58 / 0.04 / 73.35 & 0.61 / 0.03 / 74.03 \\
Cerulli et al.\ PSD 7 & 105 & 13 & 5.55 / 0.17 / 0.00 & 2.44 / 0.07 / 313.27 & 5.76 / 0.16 / 1171.40 & 3.16 / 0.08 / 295.84 & 4.04 / 0.12 / 296.55 \\
Cerulli et al.\ PSD 8 & 105 & 13 & 4.15 / 317.20 / 0.00 & 1.66 / 87.14 / 65.57 & 4.13 / 323.67 / 342.43 & 1.69 / 87.38 / 65.61 & 1.62 / 87.67 / 65.52 \\
\bottomrule
\end{tabular}
\caption{
{Comparison of the total time taken to solve subproblems using the BF, \texttt{GREEDY}, \texttt{2GREEDY}, \texttt{HYBRID}, and \texttt{OPT} algorithms. 
}
}
\label{tab:time-to-solv-disc-algs}
\end{sidewaystable}

\begin{figure}[!ht]
    \centering
    \includegraphics[width=0.7\linewidth]{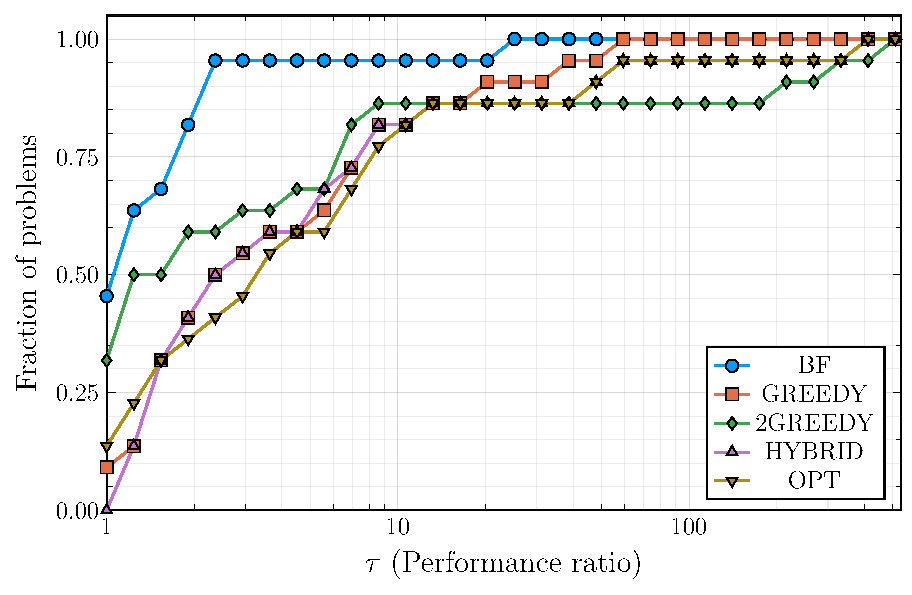}
    \caption{{Performance profiles of the bounding-focused discretization methods.}}
    \label{fig:perf-prof-all-disc}
\end{figure}

Table~\ref{tab:resul-simple-probs} details the instance, dimensions $d_x$ and $d_y$, and the number of iterations~$k$ required by each method for the lower bound to converge to $v^*$ (note that the number of discretization points at termination is $2k-2$ for \texttt{2GREEDY} and $k-1$ for the other methods). 
{Computational times are reported in Table~\ref{tab:time-to-solv-disc-algs}, with performance profiles~\cite{dolan2002benchmarking} shown in Figure~\ref{fig:perf-prof-all-disc}. 
Most problems are solved within a few seconds, with the notable exception of the larger Cerulli instances, where both the max-min solve time and the lower bounding time can be significant. 
Although our new discretization methods reduce the number of global solves, their computational advantage is often offset by the added cost of solving max-min problems, especially since the BF algorithm exhibits relatively low runtime on all but two instances.}

On almost all instances, our proposed bounding-focused discretization methods require fewer iterations for convergence than the BF algorithm (the main exception is ``Watson h'', where numerical issues affect our methods).
Notably, for most instances, our new discretization methods converge within only two iterations (corresponding to a single discretization point for \texttt{GREEDY}, \texttt{HYBRID}, and \texttt{OPT}, and two points for \texttt{2GREEDY}).
Although \texttt{OPT} is theoretically expected to perform at least as well as the other discretization methods in terms of iteration count, Table~\ref{tab:resul-simple-probs} reveals that this is not always the case in practice.{
This discrepancy arises because the solution of the max-min problem~\eqref{eqn:method-2} can sometimes get stuck at poor local maxima.
Overall, among the proposed methods, Algorithms~\texttt{GREEDY} and~\texttt{2GREEDY} deliver the best performance on small-scale instances in terms of both iteration count and computational time.}

Table~\ref{tab:resul-mpcc}, Table \ref{tab:time-to-solv-mpcc}, and Figure \ref{fig:perf-prof-mpcc} present the performance of our discretization methods when the max-min problems~\eqref{eqn:method-2} and~\eqref{eqn:method-1} are {relaxed} and solved as mathematical programs with complementarity constraints (MPCCs), see Section~\ref{subsec:solving_max_min}.
These MPCCs are solved using Knitro's dedicated algorithm for such problems.
Comparing Table~\ref{tab:resul-mpcc} with Table~\ref{tab:resul-simple-probs}, we observe that the MPCC-based discretizations sometimes converge in fewer iterations, but in other cases require more iterations.
{However, the total time spent solving the max-min problems is generally lower with the MPCC approach (see Table~\ref{tab:time-to-solv-mpcc}), which results in improved performance profiles for the proposed methods compared to the BF algorithm (with the exception of \texttt{OPT}, which suffers from significant numerical issues on the ``Tsoukalas \& Rustem 2.1'' and Cerulli et al.\ \cite{cerulli2022convergent} instances).}

\begin{table}[t]
\centering
\begin{tabular}{rccccccc}
\hline
\textbf{Instance} & $\mathbf{d_x}$ & $\mathbf{d_y}$ & BF & \texttt{GREEDY} & \texttt{2GREEDY} & \texttt{HYBRID} & \texttt{OPT} \\
& & & \multicolumn{5}{c}{\textbf{number of iterations for convergence}} \\
\hline
Watson 2 & 2 & 1 & \textbf{2} & \textbf{2} & \textbf{2} & \textbf{2} & \textbf{2} \\
Watson 5 & 3 & 1 & 5 & 6 & \textbf{2} & 7 & 5 \\
Watson 5\_10 & 10 & 1 & 3 & 4 & \textbf{2} & 3 & 3 \\
Watson 6 & 2 & 1 & 3 & \textbf{3} & \textbf{3} & \textbf{3} & \textbf{3} \\
Watson 7 & 3 & 2 & \textbf{2} & \textbf{2} & \textbf{2} & \textbf{2} & \textbf{2} \\
Watson 8 & 6 & 2 & 15 & 7 & \textbf{5} & 15 & 8 \\
Watson 9 & 6 & 2 & 9 & 5 & \textbf{4} & 13 & 5 \\
Watson h & 2 & 1 & \textbf{18} & \textbf{18} & \textbf{18} & \textbf{18} & \textbf{18} \\
Watson n & 2 & 1 & 3 & \textbf{2} & \textbf{2} & \textbf{2} & \textbf{2} \\
Seidel \& Küfer 2.1 & 2 & 1 & 8 & \textbf{2} & 3 & \textbf{2} & \textbf{2} \\
Tsoukalas \& Rustem 2.1 & 1 & 1 & 8 & 9 & \textbf{6} & 9 & $>100$ \\
Mitsos 4\_3 & 3 & 1 & 5 & 3 & \textbf{2} & 3 & 3 \\
Mitsos 4\_6 & 6 & 1 & 7 & 7 & 6 & \textbf{4} & \textbf{4} \\
Mitsos DP & 1 & 1 & 28 & \textbf{2} & \textbf{2} & \textbf{2} & \textbf{2} \\
Cerulli et al.\ PSD 1 & 21 & 5 & \textbf{2} & 3 & \textbf{2} & 5 & $>100$ \\
Cerulli et al.\ PSD 2 & 21 & 5 & \textbf{2} & 3 & \textbf{2} & 5 & $>100$ \\
Cerulli et al.\ PSD 3 & 21 & 5 & \textbf{2} & 3 & \textbf{2} & 5 & $>100$ \\
Cerulli et al.\ PSD 4 & 21 & 5 & \textbf{2} & 3 & \textbf{2} & 5 & $>100$ \\
Cerulli et al.\ PSD 5 & 66 & 10 & \textbf{5}  & \textbf{5} & \textbf{5}  & 8 & 92 \\
Cerulli et al.\ PSD 6 & 66 & 10 &  \textbf{6} & \textbf{6} &  \textbf{6} & 9 & 32 \\
Cerulli et al.\ PSD 7 & 105 & 13 & \textbf{7} & 8 & \textbf{7} & 10  & 32 \\
Cerulli et al.\ PSD 8 & 105 & 13 & \textbf{5} & 6 & \textbf{5} & 8 & 34 \\
\hline
\end{tabular}
\caption{
{Results with the MPCC relaxation of the max-min problems.
Bold entries correspond to the minimum number of iterations for each instance.}
}
\label{tab:resul-mpcc}
\end{table}

\begin{sidewaystable}[]
\footnotesize
\begin{tabular}{rccccccc}
\toprule
\textbf{Instance} & $\mathbf{d_x}$ & $\mathbf{d_y}$ & BF & \texttt{GREEDY} & \texttt{2GREEDY} & \texttt{HYBRID} & \texttt{OPT} \\[0.02in]
\multicolumn{3}{c}{} & \multicolumn{5}{c}{\textbf{Total time to solve LBP/LLP/max-min (seconds)}} \\
\midrule
Watson 2 & 2 & 1 & 0.07 / 0.06 / 0.00 & 0.08 / 0.08 / 0.00 & 0.06 / 0.06 / 0.00 & 0.05 / 0.04 / 0.00 & 0.05 / 0.05 / 0.00 \\
Watson 5 & 3 & 1 & 0.16 / 0.43 / 0.00 & 0.10 / 0.44 / 0.02 & 0.02 / 0.06 / 0.00 & 0.19 / 0.16 / 0.02 & 0.09 / 0.19 / 0.00 \\
Watson 5\_10 & 10 & 1 & 0.12 / 0.08 / 0.00 & 0.12 / 0.13 / 0.02 & 0.05 / 0.06 / 0.00 & 0.07 / 0.07 / 0.00 & 0.06 / 0.05 / 0.02 \\
Watson 6 & 2 & 1 & 0.15 / 0.05 / 0.00 & 0.14 / 0.04 / 0.02 & 0.13 / 0.03 / 0.02 & 0.11 / 0.02 / 0.00 & 0.11 / 0.03 / 0.02 \\
Watson 7 & 3 & 2 & 0.06 / 0.03 / 0.00 & 0.04 / 0.02 / 0.00 & 0.04 / 0.02 / 0.00 & 0.04 / 0.01 / 0.00 & 0.02 / 0.02 / 0.00 \\
Watson 8 & 6 & 2 & 0.00 / 1.29 / 0.00 & 0.00 / 0.33 / 0.03 & 0.00 / 0.15 / 0.02 & 0.00 / 1.03 / 0.08 & 0.00 / 0.27 / 0.03 \\
Watson 9 & 6 & 2 & 0.00 / 0.29 / 0.00 & 0.00 / 0.22 / 0.02 & 0.00 / 0.10 / 0.03 & 0.00 / 0.69 / 0.05 & 0.00 / 0.14 / 0.08 \\
Watson h & 2 & 1 & 1.83 / 0.65 / 0.00 & 1.69 / 0.52 / 0.13 & 1.63 / 0.46 / 0.16 & 1.54 / 0.38 / 0.16 & 1.50 / 0.41 / 0.17 \\
Watson n & 2 & 1 & 0.05 / 0.04 / 0.00 & 0.02 / 0.05 / 0.00 & 0.04 / 0.07 / 0.00 & 0.02 / 0.03 / 0.00 & 0.04 / 0.02 / 0.00 \\
Seidel \& Küfer 2.1 & 2 & 1 & 0.24 / 0.25 / 0.00 & 0.06 / 0.03 / 0.00 & 0.10 / 0.22 / 0.00 & 0.07 / 0.03 / 0.00 & 0.04 / 0.02 / 0.00 \\
Tsoukalas \& Rustem 2.1 & 1 & 1 & 0.47 / 0.29 / 0.00 & 0.50 / 0.22 / 0.09 & 0.31 / 0.19 / 0.06 & 0.35 / 0.20 / 0.08 & Did not converge \\
Mitsos 4\_3 & 3 & 1 & 0.00 / 0.21 / 0.00 & 0.00 / 0.06 / 0.00 & 0.00 / 0.07 / 0.00 & 0.00 / 0.05 / 0.00 & 0.00 / 0.06 / 0.00 \\
Mitsos 4\_6 & 6 & 1 & 0.00 / 0.74 / 0.00 & 0.00 / 0.68 / 0.03 & 0.00 / 0.54 / 0.00 & 0.00 / 0.14 / 0.02 & 0.00 / 0.12 / 0.02 \\
Mitsos DP & 1 & 1 & 1.51 / 1.30 / 0.00 & 0.02 / 0.06 / 0.00 & 0.05 / 0.07 / 0.00 & 0.04 / 0.04 / 0.00 & 0.04 / 0.05 / 0.00 \\
Cerulli et al. PSD 1 & 21 & 5 & 0.29 / 0.09 / 0.00 & 0.38 / 0.11 / 0.02 & 0.18 / 0.07 / 0.00 & 0.56 / 0.15 / 0.02 & Did not converge \\
Cerulli et al. PSD 2 & 21 & 5 & 0.24 / 0.09 / 0.00 & 0.33 / 0.12 / 0.02 & 0.16 / 0.09 / 0.00 & 0.49 / 0.20 / 0.02 & Did not converge \\
Cerulli et al. PSD 3 & 21 & 5 & 0.25 / 0.09 / 0.00 & 0.34 / 0.11 / 0.00 & 0.17 / 0.08 / 0.00 & 0.54 / 0.15 / 0.00 & Did not converge \\
Cerulli et al. PSD 4 & 21 & 5 & 0.28 / 0.04 / 0.00 & 0.40 / 0.06 / 0.02 & 0.32 / 0.03 / 0.00 & 0.49 / 0.09 / 0.00 & Did not converge \\
Cerulli et al. PSD 5 & 66 & 10 & 1.53 / 0.13 / 0.00 & 1.90 / 0.13 / 0.50 & 1.76 / 0.11 / 0.27 & 2.91 / 0.18 / 0.67 & 36.50 / 1.70 / 1120.73 \\
Cerulli et al. PSD 6 & 66 & 10 & 1.75 / 0.14 / 0.00 & 2.28 / 0.10 / 1.08 & 2.03 / 0.09 / 0.38 & 3.11 / 0.15 / 1.47 & 12.08 / 0.69 / 119.33 \\
Cerulli et al. PSD 7 & 105 & 13 & 5.55 / 0.17 / 0.00 & 6.92 / 0.18 / 3.88 & 5.93 / 0.15 / 4.53 & 8.44 / 0.21 / 7.56 & 27.74 / 0.66 / 112.41 \\
Cerulli et al. PSD 8 & 105 & 13 & 4.15 / 317.20 / 0.00 & 5.07 / 322.16 / 4.69 & 4.09 / 318.09 / 3.86 & 6.67 / 332.77 / 2.56 & 28.58 / 918.75 / 404.45 \\
\bottomrule
\end{tabular}
\caption{
{Comparison of the total time taken to solve subproblems using the BF, \texttt{GREEDY}, \texttt{2GREEDY}, \texttt{HYBRID}, and \texttt{OPT} algorithms when the max-min problem is relaxed and solved as an MPCC. Instances marked as ``Did not converge'' either exceeded the maximum time limit while solving~\eqref{eqn:disc-lbp} or~\eqref{eqn:llp}, or reached the maximum number of iterations.}
}
\label{tab:time-to-solv-mpcc}
\end{sidewaystable}

\begin{figure}[!ht]
    \centering
    \includegraphics[width=0.7\linewidth]{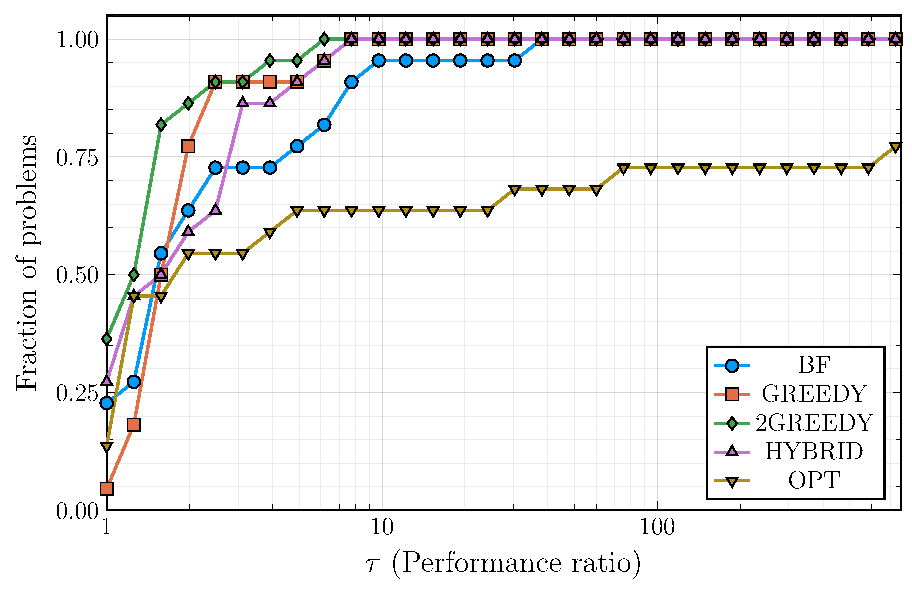}
    \caption{{Performance profiles of the bounding-focused discretization methods when the max-min problem is relaxed and solved as an MPCC.}}
    \label{fig:perf-prof-mpcc}
\end{figure}

\subsection{Bounding-focused generalized discretization methods}

We test our generalized discretization methods on small-scale SIP instances, as solving~\eqref{eqn:higher-order-lbp} to global optimality can be challenging for larger-scale SIPs.

{
Table~\ref{tab:resul-simple-probs-sens}, Table \ref{tab:time-to-solve-gen}, and Figure \ref{fig:perf-prof-gen} summarize the performance of our bounding-focused generalized discretization methods relative to the BF algorithm.
While these methods perform well on some instances, they exceed the time limit for solving~\eqref{eqn:higher-order-lbp} on several others.}
BARON appears to stall while solving these MINLPs, potentially due to weak relaxations of~\eqref{eqn:higher-order-lbp}.
In most cases, the bundle method also terminates after one iteration during the solution of problems~\eqref{eqn:method-2} and~\eqref{eqn:method-1}, which suggests that our initial guess is either already locally optimal or fails to provide a clear direction for improvement (possibly due to the smooth approximation of the projection operator).
With the exception of ``Watson~h'' and ``Tsoukalas \& Rustem~2.1'', the generalized discretization methods do not offer a significant advantage over the bounding-focused discretization methods in Section~\ref{sec:accelerated_discretization}.

\begin{table}[t]
\centering
\begin{tabular}{rccccccc}
\hline
\textbf{Instance}     & $\mathbf{d_x}$ & $\mathbf{d_y}$ &  BF            & \texttt{G-GREEDY}        & \texttt{G-2GREEDY}       & \texttt{G-HYBRID}    & \texttt{G-OPT}   \\
            &    &    &  \multicolumn{5}{c}{\textbf{number of iterations for convergence}} \\ \hline
Watson 2    & 2 & 1  & \textbf{2}                 & \textbf{2}            & \textbf{2}           & \textbf{2}             & \textbf{2}  \\
Watson 5    &  3 &  1 &      5       &  5 &  \textbf{3}  &    4      & 4   \\
Watson 5\_10    &  10 &  1 &      \textbf{3}       &  \textbf{3} &  \textbf{3}  &    \textbf{3}      & \textbf{3}   \\
Watson 6    & 2  & 1  & 3             & \textbf{2}            & \textbf{2}           & \textbf{2}             & \textbf{2}  \\
Watson 7    & 3 & 2  & \textbf{2}                 & TLE           & TLE           & 39             & TLE \\
Watson 8    &  6 &  2 &      15       &   TLE   & TLE &    TLE  &  TLE \\
Watson 9    &  6 &  2 &      9     &   TLE   & TLE &    TLE  &  TLE  \\
Watson h   & 2  & 1  & 18            & \textbf{2}    & \textbf{2} & \textbf{2} & \textbf{2}\\
Watson n    &  2&  1 &      3      &  \textbf{3}  &  \textbf{3}   &    \textbf{3}      & \textbf{3}  \\
Seidel \& K{\"u}fer 2.1      & 2  & 1  & 8            & \textbf{2}            & \textbf{2}           & \textbf{2}  & \textbf{2}             \\
Tsoukalas \& Rustem 2.1        & 1  & 1   & 8      & 4 & \textbf{3}   & 4  &4 \\
Mitsos 4\_3 & 3  & 1  & \textbf{5}             &      9   & \textbf{5}     &  \textbf{5}  & 6 \\
Mitsos 4\_6 & 6  & 1  & \textbf{7}             &         TLE    & TLE  & \textbf{7} & \textbf{7} \\
Mitsos DP & 1  & 1  & 28            &         \textbf{2}     & \textbf{2}  & \textbf{2}& \textbf{2} \\
\hline
\end{tabular}
\caption{
{Comparison of the BF, \texttt{G-GREEDY}, \texttt{G-2GREEDY}, \texttt{G-HYBRID}, and \texttt{G-OPT} algorithms.
Bold entries correspond to the minimum number of iterations for each instance, and TLE denotes the time limit was exceeded while solving~\eqref{eqn:higher-order-lbp} or~\eqref{eqn:llp}.
}
}
\label{tab:resul-simple-probs-sens}
\end{table}

\begin{sidewaystable}[]
\footnotesize
\begin{tabular}{rccccccc}
\toprule
\textbf{Instance} & $\mathbf{d_x}$ & $\mathbf{d_y}$ & BF & \texttt{G-GREEDY} & \texttt{G-2GREEDY} & \texttt{G-HYBRID} & \texttt{G-OPT} \\[0.02in]
\multicolumn{3}{c}{} & \multicolumn{5}{c}{\textbf{Total time to solve (G-)LBP/LLP/max-min (seconds)}} \\
\midrule
Watson 2 & 2 & 1 & 0.07 / 0.06 / 0.00 & 0.234 / 0.04 / 1.32 & 0.151 / 0.05 / 0.47 & 0.129 / 0.04 / 0.02 & 0.098 / 0.04 / 0.03 \\
Watson 5 & 3 & 1 & 0.16 / 0.43 / 0.00 & 0.853 / 0.17 / 1.00 & 0.463 / 0.14 / 0.08 & 0.765 / 0.13 / 0.28 & 0.586 / 0.09 / 1.86 \\
Watson 5\_10 & 10 & 1 & 0.12 / 0.08 / 0.00 & 0.23 / 0.05 / 0.12 & 0.473 / 0.08 / 0.03 & 0.19 / 0.08 / 0.05 & 0.18 / 0.07 / 0.05 \\
Watson 6 & 2 & 1 & 0.15 / 0.05 / 0.00 & 0.174 / 0.03 / 0.14 & 0.132 / 0.02 / 0.02 & 0.133 / 0.02 / 0.13 & 0.226 / 0.02 / 0.15 \\
Watson 7 & 3 & 2 & 0.06 / 0.03 / 0.00 & Did not converge & Did not converge & 263.52 / 0.50 / 327.58 & Did not converge \\
Watson 8 & 6 & 2 & 0.002 / 1.29 / 0.00 & Did not converge & Did not converge & Did not converge & Did not converge \\
Watson 9 & 6 & 2 & 0.002 / 0.29 / 0.00 & Did not converge & Did not converge & Did not converge & Did not converge \\
Watson h & 2 & 1 & 1.83 / 0.65 / 0.00 & 0.666 / 0.04 / 0.02 & 0.732 / 0.08 / 0.02 & 0.626 / 0.03 / 0.01 & 0.642 / 0.03 / 0.02 \\
Watson n & 2 & 1 & 0.05 / 0.04 / 0.00 & 0.246 / 0.04 / 1.74 & 0.302 / 0.04 / 4.58 & 0.166 / 0.03 / 1.77 & 0.159 / 0.04 / 1.77 \\
Seidel \& Küfer 2.1 & 2 & 1 & 0.24 / 0.25 / 0.00 & 0.212 / 0.06 / 0.16 & 0.563 / 0.03 / 0.19 & 0.186 / 0.04 / 0.36 & 0.306 / 0.06 / 0.38 \\
Tsoukalas \& Rustem 2.1 & 1 & 1 & 0.47 / 0.29 / 0.00 & 1.099 / 0.14 / 0.40 & 0.457 / 0.11 / 0.18 & 0.781 / 0.12 / 0.49 & 0.887 / 0.11 / 0.52 \\
Mitsos 4\_3 & 3 & 1 & 0.001 / 0.21 / 0.00 & 1.728 / 0.26 / 5.10 & 1.024 / 0.18 / 0.80 & 0.995 / 0.17 / 1.76 & 1.677 / 0.20 / 3.05 \\
Mitsos 4\_6 & 6 & 1 & 0.001 / 0.74 / 0.00 & Did not converge & Did not converge & 46.659 / 0.63 / 14.83 & 46.144 / 0.65 / 14.76 \\
Mitsos DP & 1 & 1 & 1.51 / 1.3 / 0.00 & 0.10 / 0.04 / 0.02 & 0.45 / 0.04 / 0.01 & 0.10 / 0.05 / 0.01 & 0.11 / 0.04 / 0.01 \\
\bottomrule
\end{tabular}
\caption{
{Comparison of the total time taken to solve subproblems using the BF, \texttt{G-GREEDY}, \texttt{G-2GREEDY}, \texttt{G-HYBRID}, and \texttt{G-OPT} algorithms. Instances marked as ``Did not converge'' either exceeded the maximum time limit while solving~\eqref{eqn:higher-order-lbp} or~\eqref{eqn:llp}, or reached the maximum number of iterations.}
}
\label{tab:time-to-solve-gen}
\end{sidewaystable}

\begin{figure}
    \centering
    \includegraphics[width=0.7\linewidth]{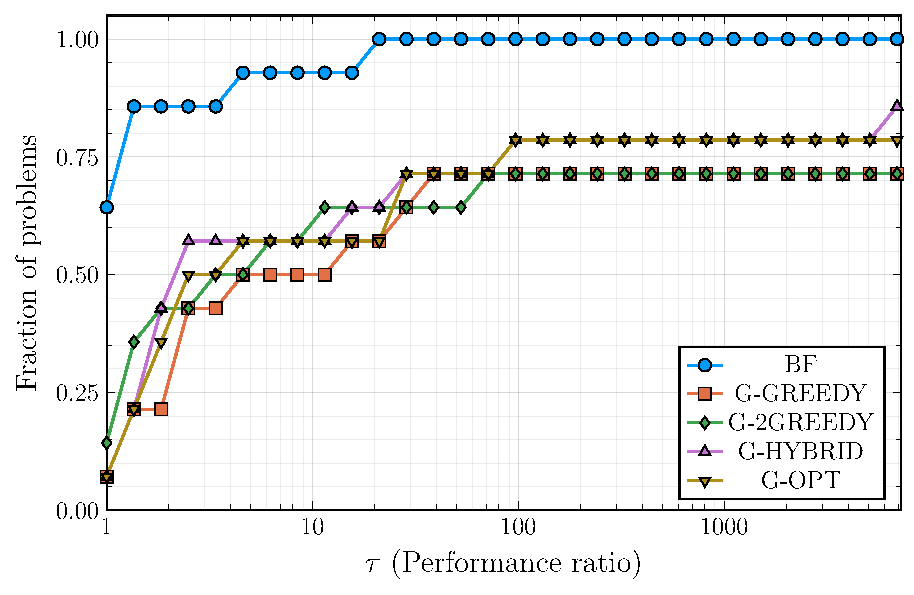}
    \caption{{Performance profiles of the bounding-focused generalized discretization methods.}}
    \label{fig:perf-prof-gen}
\end{figure}

\subsection{Discussion of results}
{Tables~\ref{tab:resul-simple-probs} to~\ref{tab:time-to-solve-gen} and Figures~\ref{fig:perf-prof-all-disc} to~\ref{fig:perf-prof-gen}} show that our bounding-focused (generalized) discretization methods have the potential to significantly reduce the number of iterations for convergence relative to the BF algorithm.
While Algorithms \texttt{GREEDY} and \texttt{2GREEDY} may not result in \textit{optimal} bounding-focused discretizations, we find that they offer \textit{viable} alternatives to the BF algorithm. 
These algorithms require fewer iterations to converge and only necessitate the heuristic solution of the max-min problem~\eqref{eqn:method-1} with $d_y$ variables at each iteration.
{Numerical experiments suggest that these new bounding-focused discretization methods are competitive with the BF algorithm on small and medium-scale SIPs, especially when the max-min problem is relaxed and solved as an MPCC.}

While our preliminary numerical results are encouraging, there is a need for the design of more efficient and reliable algorithms to solve our max-min formulations, particularly addressing the generalized discretization problems~\eqref{eqn:genl_method-2} and~\eqref{eqn:genl_method-1}. 
We anticipate that our new discretization methods will offer significant advantages when the BF algorithm requires numerous iterations to converge, and the solution of~\eqref{eqn:disc-lbp} to global optimality is relatively time-consuming.
Our new bounding-focused discretization methods can also be used as an expert strategy for machine learning approaches that seek to learn an optimal sequence of discretizations for solving families of nonconvex SIPs (cf.\ \cite{kannan2022learning,paulus2022learning,deza2023machine}).
Our future work will delve into exploring the effectiveness of these new discretization methods specifically on large-scale nonconvex SIPs.

\section{Future work}
\label{sec:conclusion}

There are many interesting avenues for future work.
First, we wish to explore the effectiveness of our new discretization methods for large-scale nonconvex SIPs.
Second, we would like to extend our bounding-focused (generalized) discretization methods to generalized semi-infinite programs~\cite{djelassi2021recent,mitsos2015global}.
Third, our bounding-focused discretization methods could be modified (cf.\ \cite{mitsos2011global}) to generate feasible points to~\eqref{eqn:sip}.
Fourth, extensions of our max-min formulations can enable the design of more efficient cutting-plane methods for a broader class of optimization problems (cf.\ \cite{paulus2022learning}).
Finally, using machine learning to learn a sequence of optimal discretizations (cf.\ \cite{paulus2022learning,kannan2022learning,deza2023machine}) can mitigate the computational burden of solving our max-min problems for larger dimensions.

\section*{Acknowledgments}
E.M.T.\ and J.J.\ acknowledge the support of the {Norwegian} Research Council through the AutoPRO project (RN: 309628).
R.K.\ acknowledges funding from the Center for Nonlinear Studies at Los Alamos National Laboratory and projects 20210078DR and 20230091ER of the U.S.\ Department of Energy’s LANL LDRD program.
We thank {Dr.\ Harsha} Nagarajan (LANL) and Dr.\ Qi Zhang (UMN) for helpful discussions.

{
\footnotesize
\section*{References}
\begingroup
\renewcommand{\section}[2]{}%
\bibliographystyle{abbrvnat}
\bibliography{main}
\endgroup
}

\appendix

\section{{Review of sensitivity theory}}
\label{sec:sensitivity_theory}

We briefly review standard results from parametric sensitivity theory~\cite{fiacco1983,still2018lectures}.
Consider the parametric nonlinear program (NLP):
\begin{align}
\label{eqn: NLP}
    \min_{z \in \R^n}\:\: &F(z,p) \\
    \text{s.t. } & c_i(z,p) \le 0, \quad \forall i \in \I, \nonumber \\
    & c_i(z,p) = 0, \quad \forall i \in \E, \nonumber
\end{align}
where $z\in\mathbb{R}^{n}$ are decision variables, $p\in\mathbb{R}^{d}$ are parameters, $F: \mathbb{R}^{n}\times\mathbb{R}^{d} \rightarrow \mathbb{R}$ is the objective function, $c:\mathbb{R}^{n}\times\mathbb{R}^{d} \rightarrow \mathbb{R}^{\abs{\I} + \abs{\E}}$ are the constraint functions,
and $\I$ and $\E$ are finite index sets.
We write $z^*(p)$ and $\nu^*(p)$ to denote a local minimum of problem~\eqref{eqn: NLP} and its optimal value $\nu^*(p) := F(z^*(p),p)$.

The Lagrangian for problem~\eqref{eqn: NLP} is $L(z,\lambda,p) := F(z,p) + \tr{\lambda} c(z,p)$, for Lagrange multipliers $\lambda \in \R^{\abs{\I} + \abs{\E}}$.
Let $\lambda^*(p)$ denote Lagrange multipliers satisfying the KKT conditions at $z^*(p)$, 
and $\A(z,p) := \{i \in \I : c_i(z,p) = 0\} \cup \E$ denote the indices of active constraints at a feasible point $z$.

We now present sufficient conditions under which $\nabla_p \nu^*(p)$ and $\nabla_p z^*(p)$ may be computed (see Fiacco~\cite{fiacco1983} or Still~\cite{still2018lectures} for details).

\begin{theorem}[Parametric sensitivities]
\label{thm:parametric_sens} 
Let $z^*(p)$ be a KKT point for problem~\eqref{eqn: NLP} with associated Lagrange multipliers $\lambda^*(p)$.
Suppose for some $\bar{p} \in \R^d$, functions $F$ and $c$ are twice continuously differentiable in a neighborhood of $(z^*(\bar{p}), \bar{p})$. 
{Assume that the following conditions hold at $(z^*(\bar{p}), \lambda^*(\bar{p}))$:
\begin{itemize}
\item Linear independence constraint qualification (LICQ): the vectors $\nabla_z c_i(z^*(\bar{p}),\bar{p})$, $i \in \A(z^*(\bar{p}),\bar{p})$, are linearly independent.
\item Strict complementarity (SC): $\lambda^*_i(\bar{p}) - c_i(z^*(\bar{p}),\bar{p})) > 0$, $\forall i \in \I$.
\end{itemize}
}
\noindent Additionally, suppose either 
\begin{enumerate}[label=(\alph*)]
\item $\abs{\A(z^*(\bar{p}),\bar{p})} = n$, or
\item the strong second order sufficient condition (SSOSC) holds at $(z^*(\bar{p}), \lambda^*(\bar{p}))$:
\begin{align*}
&{\tr{w} \nabla^2_{z} L(z^*(\bar{p}),\lambda^*(\bar{p}), \bar{p}) w > 0, \quad \forall w \in W \backslash \{0\},}
\end{align*}
\begin{align*}
{W := \big\{w \in \R^n : \:} &{\tr{(\nabla_z c_i(z^*(\bar{p}),\bar{p}))} w = 0, \:\: \forall i \in \{i \in \A(z^*(\bar{p}),\bar{p}) \cap \I: \lambda^*_i(\bar{p}) > 0\},} \\ &{\tr{(\nabla_z c_i(z^*(\bar{p}),\bar{p}))} w = 0, \:\: \forall i \in \E\big\}.}
\end{align*}
\end{enumerate}
Then $\exists \delta > 0$ such that $\forall p \in B_{\delta}(\bar{p})$, we can choose the mappings $z^*(p)$ and $\lambda^*(p)$ to be continuously differentiable on $B_{\delta}(\bar{p})$ and $z^*(p)$ to be a strict local minimizer of~\eqref{eqn: NLP}.
Additionally, for all $p \in B_{\delta}(\bar{p})$, the gradient of the value function~$\nu^*$ is given by 
\[
\nabla_p \nu^*(p) = \nabla_p L(z^*(p),\lambda^*(p),p),
\]
and the gradient of the solution mapping $z^*(p)$ may be computed for each $p \in B_{\delta}(\bar{p})$ as follows depending on whether condition (a) or (b) above holds:
\begin{enumerate}[label=(\alph*)]
\item Let $J_z(p) \in \R^{n \times n}$ and $J_p(p) \in \R^{n \times p}$ be matrices with rows $\tr{(\nabla_z c_i(z^*(p),p))}$, $i \in \A(z^*(p),p)$, and $\tr{(\nabla_p c_i(z^*(p),p))}$, $i \in \A(z^*(p),p)$, respectively. Then
\[
\nabla_p z^*(p) = -[J_z(p)]^{-1} J_p(p).
\]
\item 
Let $H_{z,\lambda}(p) := \begin{bmatrix}
\nabla^2_z L(z^*(p),\lambda^*(p),p) & J_z(p) \\
\tr{(J_z(p))} & 0 
\end{bmatrix}$, where $J_z(p)$ is a $\abs{\A(z^*(p),p)} \times n$ matrix with rows $\tr{(\nabla_z c_i(z^*(p),p))}$, $i \in \A(z^*(p),p)$.
Then
\[
\begin{bmatrix}
\nabla_p z^*(p) \\
\nabla_p \lambda^*_{\A}(p)
\end{bmatrix}  = - [H_{z,\lambda}(p)]^{-1}
\begin{bmatrix}
&\nabla_{pz} L(z^*(p),\lambda^*(p),p) \\
&\bigl(\nabla_p c_i(z^*(p),p)\bigr)_{i \in \A(z^*(p),p)}
\end{bmatrix},
\]
where $\lambda^*_{\A}(p)$ denotes the Lagrange multipliers of the active constraints at $z^*(p)$. 
\end{enumerate}
\end{theorem}%
\begin{proof}
See Chapter~3 of Fiacco~\cite{fiacco1983}, or the unified Theorem~4.4 in Still~\cite{still2018lectures}.
\end{proof}

Lemma~6.2 of Still~\cite{still2018lectures} presents weaker assumptions under which the (local) value function $\nu^*$ is locally Lipschitz continuous.
Theorem~1.12 of Dempe~\cite{dempe2017bilevel} and its surrounding discussion provides estimates of generalized gradients of~$\nu^*$ in the above setting.
Weaker assumptions for the solution mapping~$z^*$ to be H{\"o}lder continuous or locally Lipschitz continuous are presented in Theorems~6.2 to~6.5 of Still~\cite{still2018lectures}.

\section{Proofs}
\label{sec:proofs}

\subsection{Proof of Proposition~\ref{prop:alg_opt}}

The first part follows directly from the definition of the max-min problem~\eqref{eqn:method-2}, since the sequence of lower bounds obtained using Algorithm~\texttt{OPT} dominates the sequence of lower bounds obtained using the BF algorithm.
The second part follows, e.g., from Theorem~3.2 of Shapiro~\cite{shapiro2009semi}.

\subsection{Proof of Proposition~\ref{prop:convrate_bf}}

The fact that $N \leq \Big\lceil\Big(\frac{\textup{diam}(X) L_{g,x}}{\varepsilon_f} + 1\Big)^{d_x}\Big\rceil$ follows, e.g., from Section~5.2 of Mutapcic and Boyd~\cite{mutapcic2009cutting}.
We now argue that $N \leq \Big\lceil\Big(\frac{\textup{diam}(Y) L_{g,y}}{\varepsilon_f} + 1\Big)^{d_y}\Big\rceil$.

Suppose the BF algorithm has not converged by iteration $k > 1$.
Then for each $1 \leq j < k$, the candidate BF solution $x^k$ at iteration $k$ satisfies:
\begin{align*}
g(x^k,y^{BF,k}) > \varepsilon_f \:\: \text{and} \:\: g(x^k,y^{BF,j}) \leq 0 &\implies g(x^k, y^{BF,k}) - g(x^k, y^{BF,j}) > \varepsilon_f \\*
&\implies \norm{y^{BF,k} - y^{BF,j}} > \frac{\varepsilon_f}{L_{gy}}.
\end{align*}
Therefore, {the given} upper bound on the number of iterations required for the BF algorithm to converge can be obtained by calculating the number of Euclidean balls of radius $\frac{\varepsilon_f}{L_{gy}}$ needed to cover $\big( Y + \frac{\varepsilon_f}{2L_{gy}} B \big)$, where $B$ denotes the unit ball in $\R^{d_y}$ and $+$ denotes the Minkowski sum (cf.\ \citep{mutapcic2009cutting}).

\subsection{Proof of Theorem~\ref{thm:calmness}}

{Lipschitz continuity of the value function $V$ implies 
\[
V(\tilde{\varepsilon}) \geq V(0) - L_V \tilde{\varepsilon} = v^* - L_V \tilde{\varepsilon}, \quad \forall \tilde{\varepsilon} \in (0,\bar{\varepsilon}).
\]
By mirroring the proof of Proposition~\ref{prop:convrate_bf}, we conclude that whenever 
\[
k \geq \min\left\lbrace\left\lceil\left(\frac{\textup{diam}(Y) L_{g,y}}{\tilde{\varepsilon}} + 1\right)^{d_y}\right\rceil, \left\lceil\left(\frac{\textup{diam}(X) L_{g,x}}{\tilde{\varepsilon}} + 1\right)^{d_x}\right\rceil\right\rbrace, 
\]
the iterate $x^k$ produced by the BF algorithm satisfies $G(x^k) \leq \tilde{\varepsilon}$.
Consequently, for any such $k$, we have $LBD^k \geq V(\tilde{\varepsilon}) \geq v^* - L_V \tilde{\varepsilon}$ for the BF algorithm.
The desired result for the BF algorithm follows by setting $\tilde{\varepsilon} = \frac{\varepsilon}{L_V}$.
}

The result for Algorithm~\texttt{OPT} then readily follows since the sequence of lower bounds obtained using Algorithm~\texttt{OPT} dominate the sequence of lower bounds obtained using the BF algorithm.

\subsection{Proof of Lemma~\ref{lem:pwlest}}

The integral form of Taylor's theorem implies for any $z, \bar{z} \in Z$:
\begin{align*}
\norm{F(z) - F(\bar{z}) - \tr{\nabla F(\bar{z})} (z - \bar{z})} &\leq \frac{L_{\nabla F}}{2} \norm{z - \bar{z}}^2.
\end{align*}
The inequality $\norm{F(z) - F(\bar{z}) - \tr{\nabla F(\bar{z})} (z - \bar{z})} \leq \varepsilon$ holds whenever we have $z \in \Big\{v \in Z : \norm{v - \bar{z}} \leq \sqrt{\frac{2\varepsilon}{L_{\nabla F}}}\Big\}$.
The stated result follows by covering $Z$ using balls of radius $\sqrt{\frac{2\varepsilon}{L_{\nabla F}}}$ (cf.\ proof of Proposition~\ref{prop:convrate_bf}), setting $z^j$ to be the center of the $j$th ball, and setting $\alpha^j = \nabla F(z^j)$, $\beta^j = F(z^j) - \tr{\nabla F(z^j)} z^j$.

\subsection{Proof of Theorem~\ref{thm:convrate_gopt}}

Suppose Algorithm~\texttt{G-OPT} has not converged by iteration $k > 1$.
The candidate solution $x^k$ at iteration $k$ of Algorithm~\texttt{G-OPT} satisfies for each $1 \leq j < k$:
\begin{align*}
&g(x^k,y^*(x^k)) > \varepsilon_f \:\: \text{and} \:\: g(x^k,\text{proj}_Y(\bar{A}^j x^k + \bar{b}^j)) \leq 0 \\
\implies \: & g(x^k, y^*(x^k)) - g(x^k, \text{proj}_Y(\bar{A}^j x^k + \bar{b}^j)) > \varepsilon_f \\
\implies \: & L_{g,y} \norm{y^*(x^k) - \text{proj}_Y(\bar{A}^j x^k + \bar{b}^j)} > \varepsilon_f, \\
\implies \: & { L_{g,y} \norm{\text{proj}_Y(y^*(x^k)) - \text{proj}_Y(\bar{A}^j x^k + \bar{b}^j)} > \varepsilon_f,} \\
\implies \: & \norm{y^*(x^k) - (\bar{A}^j x^k + \bar{b}^j)} > \frac{\varepsilon_f}{L_{g,y}},
\end{align*}
where $Y^G_d = \{(\bar{A}^1, \bar{b}^1), \dots, (\bar{A}^{k-1}, \bar{b}^{k-1})\}$ denotes the generalized discretization at the start of iteration $k$, {and the final step follows by the projection theorem (see Proposition~2.1.3 of Bertsekas~\cite{bertsekas1999nonlinear})}.
Therefore, an upper bound on the number of iterations for Algorithm~\texttt{G-OPT} to converge can be obtained by estimating the minimal number $k$ of generalized discretization cuts required for $\underset{x \in X}{\sup} \: \underset{j \in [k]}{\min} \: \norm{y^*(x) - (\bar{A}^j x + \bar{b}^j)} \leq \frac{\varepsilon_f}{L_{g,y}}$.
The stated result then follows from Lemma~\ref{lem:pwlest}.

\end{document}